\newcolumntype{x}[1]{>{\centering\arraybackslash\hspace{0pt}}p{#1}}
\theoremstyle{definition}
\newtheorem{theorem}{Theorem}[section]
\newtheorem{definition}[theorem]{{{Definition}}}
\newtheorem{example}[theorem]{{{Example}}}
\newtheorem{notation}[theorem]{{{Notation}}}
\newtheorem{remark}[theorem]{{{Remark}}}
\newtheorem{corollary}[theorem]{{{Corollary}}}
\newtheorem{proposition}[theorem]{{{Proposition}}}
\newtheorem{lemma}[theorem]{{{Lemma}}}
\newtheorem{construction}{{{Construction}}}
\newcommand{\numberset}{\mathbb}
\newcommand{\N}{\numberset{N}}
\newcommand{\F}{\numberset{F}}
\newcommand{\mS}{\mathcal{S}}
\newcommand{\mC}{\mathcal{C}}
\newcommand{\mP}{\mathcal{P}}
\newcommand{\mL}{\mathcal{L}}
\newcommand{\mI}{\mathcal{I}}
\newcommand{\mB}{\mathcal{B}}
\newcommand{\mF}{\mathcal{F}}
\newcommand{\mD}{\mathcal{D}}
\newcommand{\sH}{\sigma}
\newcommand{\mV}{\mathcal{V}}
\newcommand{\mR}{\mathcal{R}}
\newcommand{\mH}{\mathcal{H}}
\newcommand{\wt}{\textnormal{wt}}
\newcommand{\Fq}{\F_q}
\newcommand{\rk}{\textnormal{rk}}
\DeclareMathOperator{\GL}{GL}
\DeclareMathOperator{\PG}{PG}
\DeclareMathOperator{\PGL}{PGL}
\DeclareMathOperator{\rs}{rowsp}
\newcommand{\Fhk}{[n,k]_{q^h/q}}
\newcommand{\St}{\,:\,}
\title{Outer Strong Blocking Sets}
\author{Gianira N. Alfarano$^1$}
\author{Martino Borello$^{2,3}$}
\author{Alessandro Neri$^4$}
\address{$^1$Eindhoven University of Technology, the Netherlands.}
\address{$^2$Universit\'e Paris 8, Laboratoire de G\'eom\'etrie, Analyse et Applications, LAGA, Universit\'e Sorbonne Paris Nord, CNRS, UMR 7539, France.}
\address{$^3$INRIA Saclay \& LIX, CNRS UMR 7161, École Polytechnique, France.}
\address{$^4$Ghent University, Belgium.}
\email{gianira.alfarano@gmail.com, martino.borello@univ-paris8.fr, alessandro.neri@ugent.be}
\begin{document}

\maketitle

\begin{abstract} 
Strong blocking sets, introduced first in 2011 in connection with saturating sets, have recently gained a lot of attention due to their correspondence with minimal codes. In this paper, we dig into the geometry of the concatenation method, introducing the concept of outer strong blocking sets and their coding theoretical counterpart. We investigate their structure and provide bounds on their size. As a byproduct, we improve the best-known upper bound on the minimum size of a strong blocking set. Finally, we present a geometric construction of small strong blocking sets, whose computational cost is significantly smaller than the previously known ones.
\end{abstract}


\section*{Introduction}
A \emph{strong blocking set} is a set $\mP$ of points in the projective space $\PG(k-1,q)$, such that for every hyperplane $H\subseteq \PG(k-1,q)$ the intersection $\mP\cap H$ spans $H$. These combinatorial objects, introduced in \cite{davydov2011linear} in connection with saturating sets, have been recently investigated, not only for their theoretical properties but also for their use in applications, in particular in coding theory and cryptography. 

Independently in \cite{alfarano2019geometric} and \cite{tang2019full}, it was initially shown that projective equivalence classes of strong blocking sets in $\PG(k-1,q)$ are in one-to-one correspondence with monomial equivalence classes of projective \emph{minimal codes} of dimension $k$ over $\F_q$. A code is minimal if all its nonzero codewords are minimal, i.e. their support does not contain the support of any other linearly independent codeword. First results on minimal codes
were presented in \cite{chabanne2013towards}, in connection to secure two-party computation. Bounds on the parameters of a minimal code were investigated in  \cite{alfarano2019geometric, alfarano2022three, chabanne2013towards,cohen2013minimal,lu2021parameters}. In \cite{ashikhmin1998minimal}, a sufficient
condition (now called the \emph{Ashikhmin-Barg condition}) for a code to be minimal was presented and later has been extensively used to get constructions. 
Only recently, minimal codes have been studied through their connection with strong blocking sets; see \cite{alfarano2019geometric,alfarano2022three,lu2021parameters,tang2019full}. 

A remarkable property of minimal codes is that they are asymptotically good; see \cite{alfarano2019geometric,cohen2013minimal}. In this regard, one of the main research directions of the last years focused on finding explicit constructions of minimal codes whose length grows linearly with their dimension, or equivalently, providing constructions of infinite families of strong blocking sets whose size grows linearly with the dimension of the ambient projective space.
Strong blocking sets realized as union of subspaces are the most common constructions. Indeed, if the pointset in $\PG(k-1,q)$ consists of union of subspaces it is usually easier to control the intersection with hyperplanes. In particular, many of the known constructions of strong blocking sets are union of lines; see \cite{davydov2011linear,fancsali2014lines,alfarano2019geometric,alfarano2022three,bartoli2022cutting,bartoli2021small}. 
In \cite{heger2021short}, an upper bound on the size of the smallest strong blocking set in $\PG(k-1,q)$ is provided using probabilistic methods, which improves upon the previously known bounds. In the recent paper \cite{bartoli2021small}, the authors explicitly construct families of asymptotically good minimal codes via the method of \emph{concatenation}, already exploited in \cite{cohen2013minimal}. In this way, they also
obtain explicit constructions of small strong blocking sets in projective spaces, whose size linearly depends on the dimension of the ambient space. 
In order to do so, they leverage explicit constructions of asymptotically good algebraic-geometry (AG) codes.  Moreover, an analogue of the Ashikhmin-Barg condition in the context of concatenated codes, denominated \emph{Outer AB condition} is provided. 

In this paper we pursue the study of strong blocking sets in connection with minimal codes. We provide a deeper geometric insight on the concatenation method, introducing the notions of outer strong blocking sets and outer minimal codes, which include the classical ones. These are sets whose field reduction is a strong blocking set and codes whose concatenation with minimal codes is minimal.
Since adding a point to an outer strong blocking set gives a larger outer strong blocking set, we are interested in the smallest sets with this property. We identify a sufficient property that a collection of subsets of points in $\PG(k-1,q)$ can satisfy for their union to form a strong blocking set. This turns out to be the basis for generalizing different results from \cite{fancsali2014lines} and \cite{heger2021short}. In particular, we provide lower and upper bounds on the smallest size of an outer strong blocking sets and as a byproduct, we get the best-known general upper bound on the cardinality $m(k,q)$ of the smallest strong blocking set in $\PG(k-1,q)$: 
\[m(k,q)\leq \left\lceil \frac{1}{\log_{q^2}\left(\frac{q^4}{q^3-q+1}\right)}\cdot k\right\rceil\cdot (q+1),\]
for $k$ even. For odd dimensions, we get essentially the same bound by projection. We also highlight the impact of this new result on saturating sets. Finally, we provide an explicit construction of small strong blocking sets obtained by iterative field reduction. The computational cost of our construction is significantly smaller than the ones proposed in \cite{bartoli2021small,alon2023strong}, since these require to construct first an AG code and the fastest algorithm for constructing such a code has a much higher complexity; see Section \ref{sec:constructions}. 
 \bigskip
\paragraph{\textbf{Outline}} The paper is organized as follows. In Section \ref{sec:preliminaries} we provide the necessary background on strong blocking sets and minimal codes. In Section \ref{sec:fieldreduction} we explain the concatenation method and establish its connection to the field reduction concept in geometry. Section \ref{sec:OSBS} is devoted to the introduction  and the study of outer strong blocking sets and outer minimal codes. In Section \ref{sec:bounds} we present a lower bound on the size of outer strong blocking set and the best-known general upper bound on the size of the smallest strong blocking set for a given dimension and given field size. Moreover, a connection with saturating sets is highlighted. In Section~\ref{sec:constructions} we provide an iterative construction of small outer strong blocking sets and we study its computational complexity.
\bigskip
\paragraph{\textbf{Notation}}
Throughout this paper, $q$ denotes a prime power, $\F_q$ is the finite field with $q$ elements, and $h$, $n$, $k$ are integers with $1 \le k \le n$ and $h\geq 1$. For $i \in \N =\{0,1,2, \ldots\}$ we let $[i]:=\{j \in \N \, : \, 1 \le j \le i\}$. We denote by $\PG(k-1,q)$ the $(k-1)$-dimensional projective space over $\F_q$ and by $\mathrm{AG}(k-1,q)$ the $(k-1)$-dimensional affine space over~$\F_q$.
For any matrix $M\in \F_q^{a \times b}$ we denote by $\mathrm{rowsp}(M)$ and $\mathrm{colsp}(M)$ the rowspace and the columnspace of $M$ over $\F_q$, respectively, that is the $\F_q$-subspace of $\F_q^b$ generated by the rows of $M$ and the $\F_q$-subspace of $\F_q^a$ generated by the columns of $M$. 
Given a subset $S$ of an $\F_q$-vector space~$V$, we denote by $\langle S \rangle_{\F_q}$ its $\F_q$-span, that is the $\F_q$-vector subspace generated by all the elements of $S$. Given instead a subset $T$ of a projective space $\PG(k-1,q)$, we denote by $\langle T \rangle$ its projective span, that is the projective subspace generated by all the elements of $T$. For a vector $v\in\F_q^k$, we denote by $v^\perp$ the $(k-1)$-dimensional subspace of~$\F_q^k$ orthogonal to $\langle v\rangle_{\F_q}$, with respect to the standard inner product.

\section{Preliminaries}\label{sec:preliminaries}
In this section we briefly recall the notions of strong blocking sets and linear codes, with a particular focus on minimal codes.

\subsection{Strong Blocking Sets}

The concept of \emph{strong blocking set} has been introduced in \cite{davydov2011linear} in order to construct saturating sets in projective spaces over finite fields. In \cite{fancsali2014lines} strong blocking sets are referred to
as \emph{generator sets} and they are constructed as union of disjoint lines. In \cite{bonini2021minimal} they were re-introduced under the name of \emph{cutting blocking sets} in order to construct a family of minimal codes. In this subsection, we recall some properties of strong blocking sets, existence conditions and known results about their size.
 
 \begin{definition}
A set of points $\mP$ in $\PG(k-1,q)$ is called \textbf{strong blocking set} if for every hyperplane $H\subseteq \PG(k-1,q)$ we have that
$$ \langle \mP \cap H\rangle =H.$$
\end{definition}

\begin{example}\label{ex1}
Clearly, the whole projective space $\PG(k-1,q)$ is a strong blocking set.
\end{example}

Observe that from a given strong blocking set we can always obtain a larger one by adding any set of points and considering multiplicities, hence, with a little abuse of notation, we can also consider multisets of points as strong blocking sets. Moreover, for the same reason, it is interesting to know how small a strong blocking set can be. A lower bound was shown in \cite{alfarano2022three}, which is, however, not tight.

\begin{theorem}[{\cite[Theorem 2.14]{alfarano2022three}}] The size of a strong blocking set in $\PG(k - 1, q)$ is at least $(q + 1)(k - 1)$.
\end{theorem}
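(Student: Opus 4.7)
I would argue by induction on $k$. For the base case $k=2$, every hyperplane of $\PG(1,q)$ is a single point, so the condition $\langle\mM\cap H\rangle=H$ forces every point of $\PG(1,q)$ to lie in $\mM$; hence $|\mM|=q+1=(q+1)(k-1)$.

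For the inductive step, the natural tool is a pencil-counting argument. For any codimension-two projective subspace $T\subseteq\PG(k-1,q)$, the $q+1$ hyperplanes of the pencil through $T$ partition $\PG(k-1,q)\setminus T$, and applying the strong blocking property to each such hyperplane $H'$ yields $|\mM\cap H'|\geq k-1$. The strategy is to find a $T$ disjoint from $\mM$: then $\mM\cap H'\subseteq H'\setminus T$ for every $H'$ in the pencil, and summing over the pencil gives
\[
|\mM|=|\mM\setminus T|=\sum_{H'\supseteq T}|\mM\cap(H'\setminus T)|=\sum_{H'\supseteq T}|\mM\cap H'|\geq(q+1)(k-1),
\]
closing the induction.

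If no such codimension-two subspace is disjoint from $\mM$ (which happens when $\mM$ is large, for instance when $\mM=\PG(k-1,q)$, where the bound holds trivially), I would instead pass to the quotient: fixing $P\in\mM$ and letting $\pi_P$ denote the projection onto a hyperplane not containing $P$, one checks that $\pi_P(\mM\setminus\{P\})$ is a strong blocking set in $\PG(k-2,q)$. Indeed, each hyperplane $\overline{H}$ of the quotient lifts to a hyperplane $H$ of $\PG(k-1,q)$ through $P$, and the identity $\langle\mM\cap H\rangle=H$ descends to $\langle\pi_P(\mM\setminus\{P\})\cap\overline{H}\rangle=\overline{H}$. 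The inductive hypothesis then gives $|\pi_P(\mM\setminus\{P\})|\geq(q+1)(k-2)$, and the remaining $q$ points must be extracted from nontrivial fibers of $\pi_P$, i.e.\ from lines through $P$ meeting $\mM$ in at least two points.

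The main obstacle lies precisely in this last step: the naive fiber bound $|\mM|\geq 1+(q+1)(k-2)$ is short by exactly $q$. To close the gap one has to combine the projection argument with the pencil-counting through codimension-two subspaces containing $P$, using the fact that each such pencil forces $|\mM\setminus U|\geq q+1$, and then carefully track how many of these extra points can be reused across different pencils. Selecting $P$ (or $T$) shrewdly, together with the structural constraint that no codimension-two subspace is disjoint from $\mM$ in this regime, should yield the required $q$ additional points and complete the bound.
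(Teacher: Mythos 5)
Your proposal does not complete the proof; the second case carries a genuine gap that you explicitly acknowledge but do not close. (Note also that the paper does not prove this theorem itself — it is cited from \cite{alfarano2022three} — so I am assessing the argument on its own terms.)

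Your first case is correct and in fact stands alone without induction: if a codimension-two subspace $T$ disjoint from $\mM$ exists, the pencil of $q+1$ hyperplanes through $T$ has pairwise intersections contained in $T$, so the sets $\mM\cap H'$ for $H'\supseteq T$ are pairwise disjoint; each has size at least $k-1$, and summing gives $|\mM|\geq(q+1)(k-1)$ directly. Your projection lemma in the second case is also correct: for $P\in\mM$ and $H_0\not\ni P$, the image $\pi_P(\mM\setminus\{P\})$ is a strong blocking set in $H_0\cong\PG(k-2,q)$, since any hyperplane $\bar H$ of $H_0$ lifts to a hyperplane $H=\langle P,\bar H\rangle$ of $\PG(k-1,q)$, and $\langle\mM\cap H\rangle=H$ with $P\in H$ forces $\pi_P(\mM\cap H\setminus\{P\})$ to span $\bar H$.

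The problem is precisely the deficit you identify: the inductive hypothesis gives only $|\mM|\geq 1+(q+1)(k-2)=(q+1)(k-1)-q$. Your proposed remedy — combining the projection with pencils through codimension-two subspaces containing $P$ — does not obviously recover those $q$ points. If $T\ni P$ is a codimension-two subspace, the pencil inequality gives $|\mM|\geq(q+1)(k-1)-q\,|\mM\cap T|$, which with $|\mM\cap T|\geq 1$ reproduces exactly the same shortfall of $q$. To do better you would need to exhibit a $T$ containing $P$ with $|\mM\cap T|=0$, which contradicts $P\in T$, or to argue that $\pi_P$ must have at least $q$ additional collisions; neither is established, and in the regime where every codimension-two subspace meets $\mM$, the projection can a priori be injective on $\mM\setminus\{P\}$ away from the fibers you would need. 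The closing sentence ("should yield the required $q$ additional points") is speculation, not a proof. As it stands, the argument proves only $|\mM|\geq(q+1)(k-1)-q$ in the second case, and a different idea is needed to reach $(q+1)(k-1)$ there.
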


One way to construct small strong blocking sets is by taking unions of lines. We are going to recall two constructions of small strong blocking sets given in this way. The first one was proposed by Fancsali and Sziklai in \cite{fancsali2014lines} and it works as follows. Choose  $2k-3$ distinct points on the rational normal curve in $\PG(k - 1, q)$ and take the union of the tangent lines at these points. The resulting set is a strong blocking set, under the assumption that $q\geq 2k-3$ and $\mathrm{char}(\Fq)\geq k$. The latter condition can be removed, by using the method of \emph{diverted tangents}, also described in \cite{fancsali2014lines}. We refer to a strong blocking set of this form as a \textbf{rational normal tangent set}. The drawback of this construction is the constraint on $q$. In particular, it cannot be used for a fixed field with large values of $k$. Smaller constructions have been provided for $k=3,4$ (see \cite{fancsali2014lines}), $k=5$ (see \cite{bartoli2020resolving}) and $k=6$ (see \cite{bartoli2022cutting}). 
The second construction, that we will call \textbf{tetrahedron}, works for every choice of parameters $k,q$ and it was provided independently in \cite{davydov2011linear, alfarano2019geometric, bartoli2021inductive, lu2021parameters}.  Consider $k$ points $P_1,\ldots, P_k$ in general position in $\PG(k - 1, q)$.  Then, the union of all  lines passing through each pair of distinct points $\{P_i,P_j\}$ forms a strong blocking set.

Denote by $m(k,q)$ the size of the smallest strong blocking sets in $\PG(k - 1, q)$.
In \cite{heger2021short} a probabilistic argument gives an upper bound on $m(k,q)$, which refines the result in \cite{chabanne2013towards}. 

\begin{theorem}[\cite{heger2021short}] We have that 
$$m(k,q)\leq \begin{cases}
  \frac{2k-1}{\log_2\left(\frac{4}{3}\right)}  & \textnormal{ if } q=2, \\
  (q+1) \cdot\left\lceil  \frac{2}{1+\frac{1}{(q+1)^2\ln q}}   \cdot(k-1)  \right\rceil & \textnormal{ otherwise. }
\end{cases}$$
\end{theorem}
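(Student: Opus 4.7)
The plan is a probabilistic argument that splits according to whether $q=2$ or $q\geq 3$. The starting point is the reformulation: $\mM\subseteq \PG(k-1,q)$ is a strong blocking set if and only if, for every pair $(H,T)$ consisting of a hyperplane $H$ and a codimension-$2$ projective subspace $T\subseteq H$, the intersection $\mM\cap (H\setminus T)$ is non-empty; indeed $\mM\cap H$ fails to span $H$ exactly when it is contained in some codimension-$1$ subspace of $H$. The set $H\setminus T$ has $q^{k-2}$ points and the number of pairs $(H,T)$ is $(q+1)\qbin{k}{2}{q}$, since each codimension-$2$ subspace lies in exactly $q+1$ hyperplanes.

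For $q\geq 3$, I would pick $N$ lines $\ell_1,\ldots,\ell_N$ independently and uniformly at random in $\PG(k-1,q)$, and set $\mM\defi \ell_1\cup\cdots\cup\ell_N$, which has at most $N(q+1)$ points. To estimate $p\defi \Pr[\ell\cap(H\setminus T)=\emptyset]$ for a fixed pair, partition the lines of $\PG(k-1,q)$ into three classes: those contained in $T$ (all ``bad''); those contained in $H$ but not in $T$ (all ``good'', since they meet $H\setminus T$ in $q$ points); and those not contained in $H$ (which intersect $H$ in a single uniformly distributed point, and are ``bad'' exactly when that point lies in $T$). Counting using the standard Gaussian-binomial identities yields an explicit rational expression for $p$, strictly smaller than $1/q$. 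Then by independence and a union bound,
\[
\Pr[\mM\text{ is not a strong blocking set}] \leq (q+1)\qbin{k}{2}{q}\cdot p^N,
\]
and it suffices to choose $N$ so that the right-hand side is strictly less than $1$, which produces a strong blocking set of size at most $N(q+1)$. Taking logarithms and bounding $\ln(1/p)$ from below using the computed value of $p$ together with a Taylor estimate of the form $\ln(1+x)\geq x-x^2/2$ on the overshoot past $\ln q$, yields the value $N=\lceil 2(k-1)/(1+1/((q+1)^2\ln q))\rceil$ stated in the theorem.

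For $q=2$ the line-based model is wasteful since every line has only three points. I would instead sample $N$ points uniformly and independently in $\PG(k-1,2)$, so $|\mM|\leq N$. For a fixed pair $(H,T)$ a uniform point misses $H\setminus T$ with probability $1-2^{k-2}/(2^k-1)<3/4$, and the union bound gives
\[
\Pr[\mM\text{ is not a strong blocking set}] < 3\qbin{k}{2}{2}\cdot (3/4)^N.
\]
Using $\qbin{k}{2}{2}\leq 2^{2k-1}/3$, the condition for this bound to be strictly less than $1$ becomes $N>(2k-1)/\log_2(4/3)$, matching the claim.

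The main technical obstacle sits in the $q\geq 3$ regime: the exact value of $p$ is a somewhat complicated rational function of $q$ and $k$, and extracting the precise denominator $1+1/((q+1)^2\ln q)$ requires a delicate lower bound on $\ln(1/p)$ and a clean inversion of the resulting inequality. The reformulation step and the $q=2$ case are routine by comparison, once the random model is chosen.
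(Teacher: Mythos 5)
The paper does not prove this theorem: it is stated purely as a cited result from Héger and Nagy \cite{heger2021short}, so there is no in-paper argument to compare against. Your proposal is, as far as I can judge, a faithful reconstruction of the probabilistic argument in \cite{heger2021short}: the flag reformulation (strong blocking set iff $\mM\cap(H\setminus T)\neq\emptyset$ for every hyperplane $H$ and codimension-$2$ subspace $T\subset H$), the count $(q+1)\qbin{k}{2}{q}$ of such pairs, the random-line model for $q\geq 3$, and the switch to random points for $q=2$ (natural, since every line over $\F_2$ has only three points and random lines would cost an extra factor $3$) all line up with the source.

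Two small precision remarks. For $q\geq 3$, writing $u=q^{k-2}$, the exact miss probability of a uniform line is $p=(u-1)\bigl[u(q^3-q+1)-q\bigr]/\bigl(q(uq^2-1)(uq-1)\bigr)$, which is bounded above by its limiting value $(q^3-q+1)/q^4$ for all $u\geq 1$. The estimate you need to extract the constant is $-\ln(1-x)\geq x$ rather than the two-term Taylor bound $\ln(1+x)\geq x-x^2/2$; applied with $x=(q-1)/q^3$ and using $(q-1)/q^3\geq 1/(q+1)^2$ (equivalent to $q^2-q-1\geq 0$), together with $(q+1)\qbin{k}{2}{q}\leq q^{2(k-1)}$ (which is where $q\geq 3$ is used, via $(q-1)^2\geq q$), the union bound yields exactly the stated ceiling. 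For $q=2$ the loose bounds $p_2<3/4$ and $3\qbin{k}{2}{2}<2^{2k-1}$ only give $m(k,2)\leq\lceil(2k-1)/\log_2(4/3)\rceil$; obtaining the floor implicit in the stated real-valued bound requires carrying the slack in both strict inequalities through the computation, which works but is numerically tight. Finally, it is worth noting that the probabilistic argument the paper itself develops later in Section~\ref{sec:bounds}, to prove the improved bound, is of a different flavor: it counts $K$-dimensional codes containing a bad pair of codewords, in the style of \cite{chabanne2013towards}, rather than sampling random lines and applying a union bound over flags.
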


\medskip

\subsection{Linear Codes and Projective Systems}

Let $v\in\F_q^n$. The \textbf{support} of $v$ is defined as 
$\sH(v)=\{i \mid v_i \neq 0\} \subseteq [n]$
and its (Hamming)-\textbf{weight} is $\wt(v)=|\sH(v)|$.

An $[n,k]_q$ (linear) \textbf{code} is an $\F_q$-linear subspace $\mC \subseteq \F_q^n$ of dimension $k$. The vectors in $\mC$ are called \textbf{codewords}. The \textbf{minimum (Hamming) distance} of $\mC$ is defined as
$${\rm d}(\mC)=\min\{\wt(c) \mid c \in \mC, \, c \neq 0\}.$$  If $d={\rm d}(\mC)$ is known, we say that $\mC$ is 
an $[n,k,d]_q$ code. A \textbf{generator matrix}  $G\in\F_q^{k\times n}$ of $\mC$ is a matrix such that $\mathrm{rowsp}(G)=\mC$. We say that $\mC$ is \textbf{nondegenerate} if there is no $i~\in~[n]$ such that $c_i=0$ for all $c\in \mC$, and that $\mC$ is \textbf{projective} if in one (and thus in all) generator matrix of $\mC$ no two columns are $\Fq$-linearly dependent. 
Codes $\mathcal{C}$ and $\mathcal{C}^\prime$ are (\textbf{monomially})  \textbf{equivalent} if there exists an $\F_q$-linear isometry $f: \F_q^n \to \F_q^n$ with $f(\mC)=\mC'$.

\begin{definition}\label{def:projsystem}
 A \textbf{projective} $[n,k]_q$ \textbf{system} $\mathcal{P}$ is a finite  set of $n$ points (counted with multiplicity) of $\PG(k-1,q)$ that do not all lie on a hyperplane.
 Projective $[n,k]_q$ systems $\mathcal{P}$ and $\mathcal{P}^\prime$ are \textbf{equivalent} if there exists $\phi \in \mathrm{PGL}(k,q)$ mapping $\mathcal{P}$ to $\mathcal{P}^\prime$ which preserves the multiplicities of the points.
\end{definition}

There is a well-known correspondence\label{page:correspondence} between the (monomial) equivalence classes of nondegenerate $[n,k]_q$ linear codes and the equivalence classes of projective $[n,k]_q$ systems; see \cite[Theorem~1.1.6]{MR1186841}.
More precisely, let $G$ be a $k\times n$ generator matrix of an $[n,k]_q$ linear code. Consider the set $\mathcal{P}$ of one-dimensional subspaces of $\F_q^k$ spanned by the columns of~$G$, which gives a set of points in $\PG(k-1,q)$.  Conversely, let $\mathcal{P}$ be a projective $[n,k]_q$ system. Choose a representative for any point of $\mathcal{P}$ and consider the code generated by the matrix having these representatives as columns.
 
\begin{notation}\label{not:PC}
Let $\mC$ be an $[n,k]_q$ code. Since all the results that we provide are independent from the choice of the generator matrix of $\mC$, with a little abuse of notation, in the sequel we denote by $\mP(\mC)$ an arbitrary projective system in the equivalence class corresponding to the class of $\mC$.
\end{notation}

\begin{definition} \label{def:mc}
Let $\mC$ be an $[n,k]_q$ code.
A nonzero codeword $c \in \mC$ is called \textbf{minimal} if its support does not contain the support of any other linearly independent codeword, i.e. for  every codeword $c^\prime \in \mC$ with $\sigma(c^\prime) \subseteq \sigma(c)$, there exists $\lambda\in\F_q$ such that $c^\prime=\lambda c$. We say that $\mC$ is \textbf{minimal} if all its nonzero codewords are minimal.
\end{definition}

\begin{example}\label{ex2}
Let $G$ be the $k\times (q^k-1)/(q-1)$ matrix having as columns a nonzero vector chosen from each one-dimensional subspace of $\F_q^k$.  The \textbf{simplex code} $\mS_q(k)$ of dimension~$k$ over $\F_q$ is a code equivalent to $\rs(G)$.  It is easy to see that all the codewords of $\mS_q(k)$ have the same weight $q^{k-1}$, which implies that $\mS_q(k)$ is a minimal code. Note that this example is the coding theoretic counterpart of Example \ref{ex1}.
\end{example}

Through the same correspondence aforementioned, we get the following.

\begin{theorem}[\cite{alfarano2019geometric,tang2019full}]\label{thm:correspond}
    Equivalence classes of nondegenerate $[n,k]_q$ minimal codes are in one-to-one correspondence with equivalence classes of projective systems which are strong blocking sets in $\PG(k-1,q)$.
\end{theorem}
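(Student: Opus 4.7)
The plan is to lift the classical correspondence between nondegenerate $[n,k]_q$ codes and projective $[n,k]_q$ systems (as stated above) to the subclasses of minimal codes and strong blocking sets. Since the underlying bijection on equivalence classes is already granted, it suffices to show that under it the property ``all nonzero codewords are minimal'' translates exactly into the property ``every hyperplane section spans the hyperplane.'' Concretely, fix a generator matrix $G\in\F_q^{k\times n}$ of $\mC$ and let $\mP(\mC)=\{[g_1],\dots,[g_n]\}\subseteq\PG(k-1,q)$ be the associated projective system of column classes. For every $x\in\F_q^k\setminus\{0\}$ the codeword $c_x:=xG$ satisfies
\[
\sigma(c_x)=\{\,i\in[n]\St x\cdot g_i\neq 0\,\}=[n]\setminus\{\,i\St [g_i]\in x^\perp\,\},
\]
so the support of $c_x$ is determined by the intersection $\mP(\mC)\cap x^\perp$ of $\mP(\mC)$ with the hyperplane $x^\perp$.

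First, I would translate the minimality of a single codeword $c_x$ into a geometric statement about $x^\perp$. By the formula above, $\sigma(c_y)\subseteq \sigma(c_x)$ means that every point of $\mP(\mC)$ lying in $x^\perp$ also lies in $y^\perp$, i.e.\ $\mP(\mC)\cap x^\perp\subseteq y^\perp$. Hence $c_x$ is minimal if and only if the only hyperplanes of $\PG(k-1,q)$ containing $\mP(\mC)\cap x^\perp$ are $x^\perp$ itself, which is the same as requiring $\langle \mP(\mC)\cap x^\perp\rangle =x^\perp$.

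Next, I would quantify over $x$. As $x$ ranges over nonzero elements of $\F_q^k$, the hyperplane $x^\perp$ ranges over all hyperplanes of $\PG(k-1,q)$. Thus every nonzero codeword of $\mC$ is minimal if and only if for every hyperplane $H\subseteq\PG(k-1,q)$ we have $\langle \mP(\mC)\cap H\rangle = H$, which is precisely the defining property of a strong blocking set. Nondegeneracy of $\mC$ corresponds on the geometric side to $\mP(\mC)$ not being contained in any hyperplane (equivalently, $\langle \mP(\mC)\rangle=\PG(k-1,q)$), so the two sides of the bijection indeed match class by class.

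I do not foresee a genuine obstacle: the main point is just the dictionary $\sigma(c_x)\leftrightarrow \mP(\mC)\cap x^\perp$ and the elementary fact that a hyperplane is uniquely determined by any spanning subset of it. The only care to take is to verify that equivalence classes are respected on both sides, which is immediate since monomial equivalence of codes corresponds to $\mathrm{PGL}(k,q)$-equivalence of projective systems preserving multiplicities, and both the minimality and the strong blocking set properties are preserved under these transformations.
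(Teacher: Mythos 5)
Your proof plan is correct, and it is in fact the standard argument underlying this dictionary. Note, however, that the paper itself does not prove Theorem~\ref{thm:correspond} — it is imported from \cite{alfarano2019geometric,tang2019full}. Your translation $\sigma(c_x)\leftrightarrow \mP(\mC)\setminus x^\perp$ and the observation that $c_x$ is minimal iff $x^\perp$ is the unique hyperplane containing $\mP(\mC)\cap x^\perp$ (equivalently $\langle \mP(\mC)\cap x^\perp\rangle = x^\perp$) is exactly the argument used in those references, so there is nothing to compare within this paper; the plan is sound and complete.
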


In \cite{ashikhmin1998minimal} a sufficient condition for an $[n,k]_q$ code to be minimal, known as \textbf{Ashikhmin-Barg (AB) condition}, has been provided. We recall it in the next result.

\begin{lemma}[AB condition {\cite{ashikhmin1998minimal}}]
 Let $\mC$ be an $[n,k,d]_{q}$ code and $w:=\max_{c\in \mC}\wt(\mC)$. If 
\[\frac{w}{d}<\frac{q}{q-1},\] 
then $\mC$ is minimal. 
\end{lemma}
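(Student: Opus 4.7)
The plan is to prove the contrapositive in disguise: assume $\mC$ is not minimal and derive $w/d \geq q/(q-1)$. Concretely, suppose there exist nonzero codewords $c, c' \in \mC$ with $\sigma(c') \subseteq \sigma(c)$ and $c'$ not a scalar multiple of $c$. The aim is to average the weights of the coset of codewords $c' - \alpha c$ for $\alpha \in \F_q$ and extract a contradiction from the AB inequality.

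The key computation is a pointwise count. Fix any position $i \in [n]$. If $i \notin \sigma(c)$, then since $\sigma(c') \subseteq \sigma(c)$ one has $c_i = c'_i = 0$, so position $i$ contributes $0$ to the weight of $c' - \alpha c$ for every $\alpha$. If $i \in \sigma(c)$, then $c_i \neq 0$, and $(c' - \alpha c)_i = 0$ for exactly one value $\alpha = c'_i / c_i \in \F_q$, hence this position contributes nonzero entries for exactly $q-1$ values of $\alpha$. Summing over $\alpha$, I obtain
\[
\sum_{\alpha \in \F_q} \wt(c' - \alpha c) = (q-1)\, \wt(c).
\]

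Now, since $c'$ is not a scalar multiple of $c$, the codeword $c' - \alpha c$ is nonzero for every $\alpha \in \F_q$, so each term on the left is at least $d$. Hence $q \cdot d \le (q-1) \wt(c) \le (q-1) w$, which rearranges to $w/d \ge q/(q-1)$, contradicting the hypothesis. This shows that any $c'$ whose support sits inside that of some nonzero $c$ must be a scalar multiple of $c$, so every nonzero codeword is minimal.

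I do not anticipate a genuine obstacle here: the argument is an averaging trick that is robust and elementary. The only subtle point to be careful about is explicitly invoking the hypothesis $c' \notin \langle c \rangle_{\F_q}$ to guarantee that \emph{all} $q$ coset representatives $c' - \alpha c$ are nonzero, which is what turns the averaging identity into a strict lower bound $q \cdot d$ rather than $(q-1) d$.
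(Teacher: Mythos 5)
The paper states this lemma with a citation to \cite{ashikhmin1998minimal} and does not reproduce a proof, so there is no in-paper argument to compare against. Your proof is correct and is essentially the standard averaging argument from the cited source: assume $c'$ has $\sigma(c')\subseteq\sigma(c)$ with $c'$ not a scalar multiple of $c$, compute $\sum_{\alpha\in\F_q}\wt(c'-\alpha c)=(q-1)\wt(c)$ by the pointwise count, observe that all $q$ codewords $c'-\alpha c$ are nonzero (this is precisely where linear independence is used), and conclude $qd\le(q-1)\wt(c)\le(q-1)w$, contradicting $w/d<q/(q-1)$. The bookkeeping is right; in particular you correctly handle positions outside $\sigma(c)$ via the support containment, and you correctly note that the linear-independence hypothesis is what upgrades the trivial bound $(q-1)d$ to $qd$.
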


Observing that in a minimal code any nonzero codeword is not only minimal, but also maximal, in \cite{alfarano2022three} the following lower bound on the minimum distance derived.

\begin{theorem}[{\cite[Theorem 2.8]{alfarano2022three}}]\label{thm:mindis}
    Let $\mC$ be an $[n,k,d]_q$ minimal code. Then, 
    $$d\geq (q-1)(k-1)+1.$$
\end{theorem}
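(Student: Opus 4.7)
My plan is to deduce the bound from a classical affine blocking set estimate after dualizing to the coordinate functionals. First I would upgrade the parenthetical remark in the lead--in into a working property: in a minimal code every nonzero codeword is also \emph{maximal}, meaning no linearly independent codeword has strictly larger support. This is immediate, since if $c' \in \mC$ were linearly independent from $c$ with $\supp(c) \subseteq \supp(c')$, then $c$ would witness the non--minimality of $c'$, contradicting the minimality of $\mC$.

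Next, I would fix a codeword $c \in \mC$ of weight $d$, set $S = \supp(c)$, and introduce the coordinate evaluations $\phi_i \in \mC^*$, $c' \mapsto c'_i$, for $i \in S$. Since $c_i \neq 0$, each $\phi_i$ avoids the hyperplane $c^\perp := \{\phi \in \mC^* \St \phi(c) = 0\}$. The maximality observation translates into the statement that for every $c' \in \mC \setminus \langle c \rangle_{\F_q}$ there exists $i \in S$ with $\phi_i(c') = 0$, i.e.\ with $[\phi_i]$ lying on the hyperplane $(c')^\perp$ of $\PG(\mC^*)$.

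The main step is then to read this condition in the dual projective space $\PG(\mC^*) \cong \PG(k-1,q)$. As $[c']$ ranges over $\PG(\mC) \setminus \{[c]\}$, the hyperplanes $(c')^\perp$ range over all hyperplanes of $\PG(\mC^*)$ except $[c]^\perp$. Thus the set $\mA := \{[\phi_i] \St i \in S\}$ lies entirely in $\PG(\mC^*) \setminus [c]^\perp \cong \mathrm{AG}(k-1,q)$ and meets every affine hyperplane, i.e.\ it is an affine blocking set. Invoking the Jamison / Brouwer--Schrijver theorem, according to which any affine blocking set in $\mathrm{AG}(k-1,q)$ has size at least $(q-1)(k-1)+1$, I would conclude $d = |S| \geq |\mA| \geq (q-1)(k-1)+1$.

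The heart of the argument is the dualization in which support--maximality becomes a blocking condition for coordinate functionals; once that translation is in place, the bound is a black--box application of a classical finite--geometry result. The only mild subtlety is that the map $i \mapsto [\phi_i]$ need not be injective when $\mC$ fails to be projective, but this is harmless, as it only makes $|\mA|$ smaller than $|S|$, which is already the direction we want.
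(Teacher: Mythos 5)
Your argument is correct and follows the same strategy as the original proof in \cite{alfarano2022three} to which the paper points: pass from the maximality of codewords (which the paper's remark singles out as the key observation) to the affine blocking set formed by the coordinate functionals away from $[c]^\perp$, and then invoke the Jamison/Brouwer--Schrijver bound on affine blocking sets in $\mathrm{AG}(k-1,q)$. The paper itself only cites this theorem without reproving it, so there is no internal proof to compare against, but every ingredient you use is the intended one.
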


Note that the tetrahedron in $\PG(k-1,q)$ gives rise to a minimal code whose minimum distance meets the previous bound with equality. This observation will be useful in Section~\ref{sec:OSBS}.

\section{Concatenation and Field Reduction}\label{sec:fieldreduction}
This section is devoted to the explanation of the \emph{concatenation} method, that is a machinery which can be used to provide families of asymptotically good minimal codes; see \cite{cohen1994intersecting,cohen2013minimal,bartoli2021small}.
The concatenation works as follows. 

Let $\mC$ be an $[N,K,D]_{q^h}$ code and let $(\mI_1,\ldots,\mI_N)$ be a sequence of $[n_\ell,h,d_\ell]_q$ codes, for every $\ell\in[N]$. Let $\boldsymbol{\pi}=(\pi_1,\ldots,\pi_N)$ be a sequence of $\F_q$-linear injective maps, $\pi_\ell:\F_{q^h}\to \F_q^n$, such that $\mI_\ell=\pi_\ell(\F_{q^h})$ for every $\ell\in[N]$. Then, the \textbf{concatenation of} $\mC$ \textbf{with} $(\mI_1,\ldots,\mI_N)$ \textbf{by} $\boldsymbol{\pi}$ is given by $$(\mI_1,\ldots,\mI_N)\square_{\boldsymbol{\pi}} \mC:=\{(\pi_1(c_1),\dots, \pi_N(c_N))\St (c_1,\dots, c_N)\in\mC\}\subseteq \F_q^{M},$$
where $M=\sum\limits_{\ell=1}^N n_\ell$.

The code $\mC$ is called \textbf{outer} code, while the codes $\mathcal{I}_\ell$ are called \textbf{inner} codes. Note that the concatenation depends on the choice of the maps $\pi_\ell$. However, some properties do not depend on it: for example, for every choice of $\pi$ we have that  $(\mI_1,\ldots,\mI_N) \square_{\boldsymbol{\pi}} \mC$ is an $[M,Kh]_q$ code.
If $\boldsymbol{\pi}=(\pi,\ldots,\pi)$ and all the inner codes are the same $[n,h,d]_q$ code~$\mI$, we denote the concatenation $(\mI,\ldots,\mI) \square_{\boldsymbol{\pi}} \mC$ simply by $\mI\square_\pi \mC$. In this case, $\mI\square_\pi \mC$ is an $[Nn, Kh, \geq Dd]_q$ code.
When we consider properties independent of $\boldsymbol{\pi}$, we omit it from the subscript from $\square$.

We finally recall a possible shape of the generator matrix of the concatenated code, corresponding to a specific choice of $\boldsymbol{\pi}$, as described in \cite[Remark 2.4]{bartoli2021small}. Let $G_{\mathcal{I}_1},\ldots, G_{\mathcal{I}_N}\in\F_q^{h\times n}$ be  generator matrices of the inner codes $\mathcal{I}_1,\ldots,\mathcal{I}_N$, respectively, and fix $\omega\in\F_{q^h}$ to be such that $\langle \omega\rangle =\F_{q^h}^\ast$. Let $A\in\F_q^{h\times h}$ be the companion matrix of the minimal polynomial of $\omega$. For any $\alpha \in\F_{q^h}$ let 
$$
A(\alpha) =\begin{cases} 
A^r\in\F_q^{h\times h} & \textnormal{ if } 0\ne\alpha=\omega^r, \\
\mathbf{0}\in\F_q^{h\times h} & \textnormal{ if } \alpha=0.
\end{cases}$$

Consider now a generator matrix $G_{\mC}\in \F_{q^h}^{K\times N}$ of the outer code $\mC$, where $G_{\mC}=(\alpha_{i,j})_{1\leq i\leq K,1\leq j\leq N}$. Then, a generator matrix of a concatenation of $\mathcal{C}$ with $(\mathcal{I}_1,\ldots,\mathcal{I}_N)$ can be chosen as
\begin{equation}\label{eq:gen_matrix_conc}
    \begin{pmatrix}
    A(\alpha_{1,1})G_{\mathcal{I}_1} &  A(\alpha_{1,2})G_{\mathcal{I}_2} & \cdots &  A(\alpha_{1,N})G_{\mathcal{I}_N}\\
    \vdots & \vdots &   & \vdots \\
     A(\alpha_{K,1})G_{\mathcal{I}_1} &  A(\alpha_{K,2})G_{\mathcal{I}_2} & \cdots &  A(\alpha_{K,N})G_{\mathcal{I}_N}
    \end{pmatrix}\in\F_{q}^{Kh \times M}.
\end{equation}

A geometric insight of the concatenation process can be given by making use of the \emph{field reduction map}, which is a map sending subspaces of $\PG(K-1,q^h)$ into subspaces of $\PG(Kh-1,q)$. More formally, let $\mathcal F_q$ be the field reduction map, sending points of $\PG(K-1,q^h)$ in $(h-1)$-dimensional projective subspaces of $\PG(Kh-1,q)$. With a slight abuse of notation, we will consider $\mathcal F_q$ as a multimap from  $\PG(K-1,q^h)$ to $\PG(Kh-1,q)$. 

Let $r>0$ be an integer and $S=\{P_1,\ldots,P_r\}$ be a set of points in $\PG(K-1,q^h)$. We denote by $\mF_q(S)$ the set $\mF_q(P_1)\cup \ldots\cup \mF_q(P_r)$. Let $\mC$ be an $[N,K]_{q^h}$ code and let $\mP(\mC)$ be the projective system associated to one generator matrix for $\mC$. By a little abuse of notation, we denote by $\mF_q(\mC)$ the code associated with the projective system $\mF_q(\mP(\mC))$.

\begin{theorem}\label{thm:field_red+simplex}
 Let $\mC$ be an $[N,K]_{q^h}$ code.
 Then $\mF_q(\mC)=\mS_q(h)\square\mC$.
\end{theorem}
\begin{proof}
    Let $G_\mC=(\alpha_{i,j})_{i,j}$ be a generator matrix for $\mC$ and $A\in\F_{q}^{h\times h}$ be the companion matrix of the minimal polynomial of a primitive element $\omega\in\F_{q^h}^\ast$.
    First of all recall that a generator matrix $G_{\mS_q(h)}$ of $\mS_q(h)$ contains as columns all the points of $\PG(h-1,q)$. Let $G$ be the generator matrix of $\mS_q(h)\square\mC$ given as in Eq. \eqref{eq:gen_matrix_conc}.
    Observe that the first column-block of $G$ is given by $X\cdot G_{\mS_q(h)}$, where
    $$ X=\begin{pmatrix}
    A(\alpha_{1,1}) \\
    A(\alpha_{2,1}) \\
    \vdots \\
    A(\alpha_{K,1})
    \end{pmatrix}.$$
    By definition, $ \mathrm{colsp}(X)$ corresponds to the $(h-1)$-dimensional projective subspace of $\PG(Kh-1,q)$ given by the field reduction of the projective point $P=[\alpha_{1,1}:\ldots:\alpha_{K,1}]$. Finally, the multiplication of $X$ with $G_{\mS_q(h)}$ gives a  nonzero vector of each one-dimensional subspace of $\mathrm{colsp}(X)$. This shows that the first column-block of $G$ is the same as the one of $\mF_q(\mC)$. The same reasoning can be done for all the column-blocks, and hence we get that $\mS_q(h)\square\mC = \mF_q(\mC)$.
\end{proof}

\begin{example}
        Let $\{1,\omega\}$ be a basis of $\F_{2^2}/\F_2$. Let $\mC$ be the code generated by the matrix $$G=\begin{pmatrix}
            1 & 0 & 1 & 1 \\
            0 & 1 & 1 & \omega
        \end{pmatrix}\in\F_{2^2}^{2\times 4}.$$
        Then, $\mF_2(\mC)$ is the code generated by 
        $$ \begin{pmatrix}
        1 & 0 & 1 & 0 & 0 & 0 & 1 & 0 & 1 & 1 & 0 & 1 \\
        0 & 1 & 1 & 0 & 0 & 0 & 0 & 1 & 1 & 0 & 1 & 1 \\
        0 & 0 & 0 & 1 & 0 & 1 & 1 & 0 & 1 & 0 & 1 & 1 \\
        0 & 0 & 0 & 0 & 1 & 1 & 0 & 1 & 1 & 1 & 1 & 0
        \end{pmatrix},
        $$
        which is easily seen to be  a generator matrix for $\mS_2(2)\square\mC$ as given in \eqref{eq:gen_matrix_conc}.
\end{example}

\begin{remark}
    Let $\mC$ be an $[N,K]_{q^h}$ code generated by $G_\mC$ and let $\mI$ be an $[n,h]_{q}$ projective code, different from the simplex code. For each $j\in[N]$, let $\pi_j:\F_{q^h}\to \mI$. Note that since $\mI$ is projective, we have that $\mP(\mI)\subseteq \mP(\mS_q(h))$. For every $j\in[N]$, define $\bar{\pi}_j:\F_{q^h}\to \mS_q(h)$,
    such that when $\bar{\pi}_j$ is applied to the $j$-th entry of $\mC$, it gives the field reduction $\mF_q(P_j)$ of the projective points given by the $j$-th column of $G_\mC$. Applying $\pi_j$ to the $j$-entry of $\mC$ corresponds to selecting only some points from the $(h-1)$-dimensional space given by $\mF_q(P_j)$. In other words, we have that $\mP(\mI\square_{\boldsymbol{\pi}}\mC)\subseteq \mP(\mS_q(h)\square_{\boldsymbol{\bar{\pi}}}\mC)$. We illustrate this in the following example.
\end{remark}

\begin{example}
        Let $\omega\in\F_{2^3}$ be such that $\omega^3=\omega+1$ and let $\{1,\omega,\omega^2\}$ be a basis of $\F_{2^3}/\F_2$. 
        Let $$G_\mC=\begin{pmatrix}
            1 & 0 & 1 & \omega \\
            0 & 1 & 1 & \omega^2
        \end{pmatrix}\in\F_{2^3}^{2\times 4}.$$
        Let $\mI$ be the $[4,3]_2$ code generated by 
        $$ G_\mI=\begin{pmatrix}
            1 & 0 & 0 & 1\\
            0 & 1 & 0 & 1 \\
            0 & 0 & 1 & 1
        \end{pmatrix}$$
        which corresponds to the columns highlighted in yellow of the  generator matrix of the simplex code   
        $$ G_{\mS_2(3)}=\left(\begin{array}{>{\columncolor{yellow!20}}c>{\columncolor{yellow!20}}cc>{\columncolor{yellow!20}}ccc>{\columncolor{yellow!20}}c}
        1 & 0 & 1 & 0 & 1 & 0 & 1 \\
        0 & 1 & 1 & 0 & 0 & 1 & 1 \\
        0 & 0 & 0 & 1 & 1 & 1 & 1
        \end{array}\right).$$
                        Then, we have that 
        $$\tiny{G_{\mS_2(3)\square \mC} =  \left(\begin{array}{>{\columncolor{yellow!20}}c>{\columncolor{yellow!20}}cc>{\columncolor{yellow!20}}ccc>{\columncolor{yellow!20}}c>{\columncolor{yellow!20}}c>{\columncolor{yellow!20}}cc>{\columncolor{yellow!20}}ccc>{\columncolor{yellow!20}}c>{\columncolor{yellow!20}}c>{\columncolor{yellow!20}}cc>{\columncolor{yellow!20}}ccc>{\columncolor{yellow!20}}c>{\columncolor{yellow!20}}c>{\columncolor{yellow!20}}cc>{\columncolor{yellow!20}}ccc>{\columncolor{yellow!20}}c}
            1 & 0 & 1 & 0 & 1 & 0 & 1 & 0 & 0 & 0 & 0 & 0 & 0 & 0 & 1 & 0 & 1 & 0 & 1 & 0 & 1 & 0 & 0 & 0 & 1 & 1 & 1 & 1\\
            0 & 1 & 1 & 0 & 0 & 1 & 1 & 0 & 0 & 0 & 0 & 0 & 0 & 0 & 0 & 1 & 1 & 0 & 0 & 1 & 1 & 1 & 0 & 1 & 1 & 0 & 1 & 0 \\
            0 & 0 & 0 & 1 & 1 & 1 & 1 & 0 & 0 & 0 & 0 & 0 & 0 & 0 &  0 & 0 & 0 & 1 & 1 & 1 & 1 & 0 & 1 & 1 & 0 & 0 & 1 & 1 \\
            0 & 0 & 0 & 0 & 0 & 0 & 0 & 1 & 0 & 1 & 0 & 1 & 0 & 1 & 1 & 0 & 1 & 0 & 1 & 0 & 1 & 0 & 1 & 1 & 0 & 0 & 1 & 1 \\
            0 & 0 & 0 & 0 & 0 & 0 & 0 & 0 & 1 & 1 & 0 & 0 & 1 & 1 & 0 & 1 & 1 & 0 & 0 & 1 & 1 & 0 & 1 & 1 & 1 & 1 & 0 & 0 \\
            0 & 0 & 0 & 0 & 0 & 0 & 0 & 0 & 0 & 0 & 1 & 1 & 1 & 1 & 0 & 0 & 0 & 1 & 1 & 1 & 1 & 1 & 0 & 1 & 1 & 0 & 1 & 0    
        \end{array}\right)}.$$
        A generator matrix for $\mI\square\mC$ is obtained from $G_{\mS_q(h)\square \mC}$ by selecting only the columns highlighted in yellow.
\end{example}

\section{Outer Strong Blocking Sets and Outer Minimal Codes}\label{sec:OSBS}

In this section we introduce the notion of \emph{outer strong blocking sets} and \emph{outer minimal codes}. We establish a series of correspondences and results.
We start by introducing auxiliary definitions.

\begin{definition}
We call a set $\mV\subseteq \F_{q^h}^K$ an $\F_q$\textbf{-vectorial strong blocking set} if   $\langle \mV\cap \mH\rangle_{\Fq}=\mH$ for all  $\F_q$-hyperplanes $\mH$ of $\F_{q^h}^K$.
\end{definition}

\begin{definition}
    Let $\mC$ be an $[N,K]_{q^h}$ code and let $G\in\F_{q^h}^{K\times N}$ be a generator matrix of $\mC$. Let $v_1,\ldots, v_N\in\F_{q^h}^K$ denote the columns of $G$. We define the \textbf{vectorial system associated with $G$} to be the set 
    $$ \mV(G):= \langle v_1\rangle_{\F_{q^h}}\cup  \langle v_2\rangle _{\F_{q^h}}\cup \cdots \cup  \langle v_N\rangle _{\F_{q^h}}.$$

\end{definition}

   Note that if $\mC^\prime$ is a code equivalent to $\mC$ and $G^\prime$ is a generator matrix of $\mC^\prime$, we have that $\mV(G)$ and $\mV(G^\prime)$ are $\GL(K,q^h)$-equivalent. Hence, as we did in Notation \ref{not:PC}, we refer to $\mV(\mC)$ as an arbitrary vectorial system associated to the equivalence class of $\mC$.

\begin{definition}
We say that a subset $\mP$ of $\PG(K-1,q^h)$  is a $q$\textbf{-outer strong blocking set} if $\mF_q(\mP)$ is a strong blocking set in $\PG(Kh-1,q)$, i.e. for any hyperplane $H\subseteq\PG(Kh-1,q)$, we have that
\[\langle \mF_q(\mP)\cap H\rangle =H.\]
From now on, whenever the extension considered is clear from the context, we will simply say outer strong blocking set.
\end{definition}

Recall the following result from \cite{alfarano2022three}.

\begin{lemma}[{\cite[Proposition 4.5]{alfarano2022three}}]\label{lemma_prop4.5}
 Let $U_1\cup \cdots\cup U_r$ be a strong blocking set in $\PG(K-1,q)$. For each $i\in [r]$, let $\Gamma_i:=\langle U_i\rangle \cong\PG(h_i-1,q)$ for some $h_i-1\leq k$ and let $B_i\subseteq\Gamma_i$ be the isomorphic image of a strong blocking set in $\PG(h_i-1,q)$. Then $B_1\cup \cdots\cup B_r$ is a strong blocking set in $\PG(K-1,q)$.
\end{lemma}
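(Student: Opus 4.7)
The plan is to fix an arbitrary hyperplane $H \subseteq \PG(K-1,q)$ and show that $\langle (B_1\cup\cdots\cup B_r)\cap H\rangle = H$. The natural strategy is to compare this span to $\langle (U_1\cup\cdots\cup U_r)\cap H\rangle$, which already equals $H$ by the hypothesis that $U_1\cup\cdots\cup U_r$ is strong blocking. So it suffices to prove the inclusion
\[
(U_1\cup\cdots\cup U_r)\cap H \;\subseteq\; \langle (B_1\cup\cdots\cup B_r)\cap H\rangle,
\]
which I would establish one index $i$ at a time by a simple case analysis on the position of $\Gamma_i$ relative to $H$.

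For each $i$, intersecting the subspace $\Gamma_i$ with $H$ gives either $\Gamma_i$ itself (when $\Gamma_i\subseteq H$) or a hyperplane of $\Gamma_i$ (when $\Gamma_i\not\subseteq H$). In the first case, $B_i\subseteq \Gamma_i\subseteq H$, and since $B_i$ is a strong blocking set of $\Gamma_i$ we have $\langle B_i\cap H\rangle = \langle B_i\rangle = \Gamma_i \supseteq U_i\cap H$. In the second case, $H\cap \Gamma_i$ is a hyperplane of $\Gamma_i\cong \PG(h_i-1,q)$, so applying the strong blocking property of $B_i$ inside $\Gamma_i$ yields
\[
\langle B_i\cap(H\cap\Gamma_i)\rangle = H\cap\Gamma_i \supseteq U_i\cap H,
\]
and in particular $B_i\cap(H\cap\Gamma_i) = B_i\cap H$ since $B_i\subseteq \Gamma_i$. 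Either way, $U_i\cap H\subseteq \langle B_i\cap H\rangle \subseteq \langle (B_1\cup\cdots\cup B_r)\cap H\rangle$.

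Taking the union over $i$ and then the span, I obtain
\[
H \;=\; \langle (U_1\cup\cdots\cup U_r)\cap H\rangle \;\subseteq\; \langle (B_1\cup\cdots\cup B_r)\cap H\rangle \;\subseteq\; H,
\]
where the first equality is the assumption and the last inclusion holds because each $B_i$ is a set of projective points and we intersect with $H$. Hence equality holds throughout, and since $H$ was arbitrary this shows $B_1\cup\cdots\cup B_r$ is a strong blocking set.

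There is no real obstacle here: the result follows essentially from the observation that a hyperplane $H$ of the ambient space cuts each subspace $\Gamma_i$ in either the whole $\Gamma_i$ or a hyperplane of $\Gamma_i$, which is precisely the object the strong blocking property of $B_i$ is designed to handle. The only subtle point to state cleanly is the transfer from ``strong blocking inside $\Gamma_i$'' to ``strong blocking inside the ambient space'', which is why the case split on whether $\Gamma_i\subseteq H$ must be made explicit.
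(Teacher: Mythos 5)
The paper does not actually prove this lemma: it cites it directly as \cite[Proposition 4.5]{alfarano2022three} and gives no argument, so there is no in-paper proof to compare against. Your argument on its own terms is correct and is the natural one: for each $i$, the hyperplane $H$ either contains $\Gamma_i$ entirely (in which case $B_i$ spans $\Gamma_i \supseteq U_i \cap H$, using that a strong blocking set spans its ambient space) or meets $\Gamma_i$ in a hyperplane of $\Gamma_i$ (in which case the strong blocking property of $B_i$ inside $\Gamma_i$ recovers $H \cap \Gamma_i \supseteq U_i \cap H$), and in both cases $U_i \cap H \subseteq \langle B_i \cap H\rangle$; summing over $i$ and invoking the strong blocking property of $\bigcup_i U_i$ finishes the proof. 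The only small point you use implicitly and might wish to state is that a strong blocking set in $\PG(h_i-1,q)$ spans the whole space (take the span of its intersections with two distinct hyperplanes); this is standard and is what justifies $\langle B_i\rangle = \Gamma_i$ in your first case.
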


\begin{theorem}\label{thm:equivalence}
Let $\mC$ be an $[N,K]_{q^h}$ code. The following are equivalent:
   \begin{itemize}
   \item[{\rm (a)}] $(\mI_1,\ldots,\mI_N)\square \mC$ is minimal, for every sequence $(\mI_1,\ldots,\mI_N)$ of $[n_i,h]_q$ minimal codes.
   \item[{\rm (b)}] $\mS_q(h)\square \mC$ is minimal.
   \item[{\rm (c)}] $\mP(\mC)$ is an outer strong blocking set.
   \item[{\rm (d)}] $\mV(\mC)$ is an $\F_q$-vectorial strong blocking set.
   \end{itemize}
\end{theorem}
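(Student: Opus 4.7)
The plan is to prove the chain (a) $\Rightarrow$ (b) $\Leftrightarrow$ (c) $\Leftrightarrow$ (d) $\Rightarrow$ (a), closing a loop through the four statements. The implication (a) $\Rightarrow$ (b) is immediate: one specializes all the inner codes to $\mI_i = \mS_q(h)$, which is minimal by Example \ref{ex2}. The equivalence (c) $\Leftrightarrow$ (d) is a direct translation of definitions: after the identification $\F_{q^h}^K \cong \F_q^{Kh}$, projective points of $\mF_q(\mP(\mC))$ in $\PG(Kh-1,q)$ correspond bijectively to nonzero vectors of $\mV(\mC)$ modulo $\F_q^\ast$, while projective hyperplanes of $\PG(Kh-1,q)$ correspond to $\F_q$-hyperplanes of $\F_{q^h}^K$; under this dictionary the two span conditions match verbatim.

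For (b) $\Leftrightarrow$ (c), Theorem \ref{thm:field_red+simplex} identifies $\mS_q(h)\square\mC$ with the field-reduction code $\mF_q(\mC)$, whose projective system is $\mF_q(\mP(\mC))$ by construction. The correspondence in Theorem \ref{thm:correspond} then translates minimality of $\mS_q(h)\square\mC$ into the property that $\mF_q(\mP(\mC))$ is a strong blocking set in $\PG(Kh-1,q)$, which is exactly the definition of $\mP(\mC)$ being a $q$-outer strong blocking set.

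The heart of the argument is (c) $\Rightarrow$ (a), and the main tool will be Lemma \ref{lemma_prop4.5}. Fix minimal inner codes $\mI_1,\ldots,\mI_N$ of dimension $h$ over $\F_q$ and look at $(\mI_1,\ldots,\mI_N)\square\mC$ via a generator matrix of the shape in Eq. \eqref{eq:gen_matrix_conc}. For each $i\in[N]$, the $n_i$ columns of its $i$-th block, read through the isomorphism $\F_q^{Kh}\cong \F_{q^h}^K$, have the form $y_j v_i$, where $v_i\in\F_{q^h}^K$ is the $i$-th column of $G_\mC$ and $y_j\in\F_{q^h}$ runs over representatives of the projective points of $\mP(\mI_i)$. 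These vectors all lie in the $h$-dimensional $\F_q$-subspace $\langle v_i\rangle_{\F_{q^h}}$, whose projectivization is $\mF_q(P_i)$, and inside $\mF_q(P_i)\cong \PG(h-1,q)$ they form a subset $B_i$ which is the image of $\mP(\mI_i)$ under an $\F_q$-linear isomorphism. Since $\mI_i$ is minimal, Theorem \ref{thm:correspond} forces $\mP(\mI_i)$, and hence $B_i$, to be a strong blocking set in $\langle B_i\rangle=\mF_q(P_i)$.

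Hypothesis (c) gives that $\mF_q(P_1)\cup\cdots\cup\mF_q(P_N)$ is a strong blocking set in $\PG(Kh-1,q)$, so Lemma \ref{lemma_prop4.5} applies with $U_i=\mF_q(P_i)$ and the $B_i$ constructed above, yielding that $B_1\cup\cdots\cup B_N$ is again a strong blocking set; Theorem \ref{thm:correspond} then upgrades this to minimality of $(\mI_1,\ldots,\mI_N)\square\mC$. The obstacle I expect is precisely this last bookkeeping step: one must verify that every point of the projective system of the concatenated code really does sit inside one of the $\mF_q(P_i)$, and that the piece it contributes is a strong blocking set of that subspace, independently of the chosen embeddings $\boldsymbol{\pi}$ (the concatenation class is insensitive to this choice, but a naive generator matrix is not).
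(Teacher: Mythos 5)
Your proof is correct and follows essentially the same route as the paper's: same use of Theorem \ref{thm:field_red+simplex} and Theorem \ref{thm:correspond} for (b) $\Leftrightarrow$ (c), same dictionary for (c) $\Leftrightarrow$ (d), and the same appeal to Lemma \ref{lemma_prop4.5} for the return arrow (the paper phrases it as (b) $\Rightarrow$ (a), you as (c) $\Rightarrow$ (a), which is the same thing once (b) $\Leftrightarrow$ (c) is in place). The ``obstacle'' you flag at the end is already absorbed by the generality of Lemma \ref{lemma_prop4.5}: it accepts \emph{any} isomorphic copy of a strong blocking set inside each $\Gamma_i = \mF_q(P_i)$, so the dependence of the embedded copies $B_i$ on the chosen $\boldsymbol{\pi}$ is harmless.
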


\begin{proof}
\underline{{\rm (a)}$\Rightarrow${\rm (b)}.} This is trivially true, since $\mS_q(h)$ is minimal.

\noindent \underline{{\rm (b)}$\Rightarrow${\rm (a)}.} Since  $\mS_q(h)\square \mC$ is minimal, the set $\mP(\mS_q(h)\square \mC)$ is a strong blocking set. Each $\mP(\mI_j)$ is a strong blocking set in $\mP(\mS_q(h))\cong\PG(h-1,q)$. Hence, $\mP((\mI_1,\ldots,\mI_N)\square \mC)$ is a strong blocking set by Lemma \ref{lemma_prop4.5}. 

\noindent \underline{{\rm (b)}$\Leftrightarrow${\rm (c)}.} This is a direct consequence of Theorem \ref{thm:correspond} and Theorem \ref{thm:field_red+simplex}.

\noindent \underline{{\rm (c)}$\Leftrightarrow${\rm (d)}.} 
This equivalence follows immediately from the correspondence between $\F_q$-hyperplanes in $\F_{q^h}^K$ and hyperplanes in $\PG(Kh-1,q)$, and from the isomorphism between $\F_{q^h}^K$ and $\F_q^{Kh}$ as $\Fq$-vector spaces.
\end{proof}

\begin{definition}
We call $q$\textbf{-outer minimal} an $[N,K]_{q^h}$ code $\mC$ satisfying one of the equivalent conditions in Theorem \ref{thm:equivalence}. Again, we will omit the $q$- whenever the extension considered is clear from the context.
\end{definition}

\begin{remark}
Note that if there exists $j$ such that $\mI_j$ is not minimal, then  $(\mI_1,\ldots,\mI_N)\square \mC$ is not necessarily minimal.
For instance, let $\F_4=\F_2(\alpha)$ with $\alpha^2=\alpha+1$ and consider $\mC=\mS_4(2)\subseteq \F_4^5$. Let $\mI$ be the whole space $\F_2^2$, which is not minimal. Then a generator matrix for $\mI\square \mC$ is given by
$$\begin{pmatrix}
1 & 0 & 0 & 0 & 1 & 0 & 1 & 0 & 1 & 0 \\
0 & 1 & 0 & 0 & 0 & 1 & 0 & 1 & 0 & 1 \\
0 & 0 & 1 & 0 & 1 & 0 & 0 & 1 & 1 & 1 \\
0 & 0 & 0 & 1 & 0 & 1 & 1 & 1 & 1 & 0
\end{pmatrix}\in\F_2^{4\times {10}}.$$
Clearly, $\mI\square \mC$ is not minimal, since the first two rows of its generator matrix have nonintersecting support.
\end{remark}

In the following we characterize outer minimal codes in terms of the supports of their codewords, in the same way as it is defined for minimal codes. We first introduce  the concept of \emph{outer minimal codeword}.

\begin{definition}\label{def:outercodeword}
   Let $\mC$ be an $[N,K]_{q^h}$ code. A nonzero codeword $c\in \mC\subseteq  \F_{q^h}^N$ is called ($q$-)\textbf{outer minimal} if, for all $c'\in \mC$, \
     \[\sH(c')\subseteq \sH(c) \wedge \forall i\in \sH(c), \exists \lambda_i\in \F_q \text{ s.t. } c'_i=\lambda_i c_i \ \ \ \Longrightarrow \ \ \ \exists \lambda\in \F_q \text{ s.t. } c'=\lambda c.\] 
\end{definition}

\begin{proposition}\label{prop:outercodeword}
    An $[N,K]_{q^h}$ code $\mC$ is outer minimal if and only if all its nonzero codewords are outer minimal.
\end{proposition}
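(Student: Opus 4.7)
My plan is to use characterization (d) of Theorem \ref{thm:equivalence}, namely that $\mC$ is outer minimal iff $\mV(\mC)$ is an $\F_q$-vectorial strong blocking set, and match it hyperplane-by-hyperplane with the outer minimality of individual codewords through a trace-pairing correspondence. Fix a generator matrix $G$ of $\mC$ with columns $v_1,\ldots,v_N$, so every codeword reads $c=uG$ with $c_j=u\cdot v_j$ for a unique $u\in\F_{q^h}^K$. The non-degenerate $\F_q$-bilinear pairing $(u,v)\mapsto \mathrm{Tr}_{q^h/q}(u\cdot v)$ lets me write every $\F_q$-hyperplane of $\F_{q^h}^K$ in the form $\mH_u:=\{v\St \mathrm{Tr}(u\cdot v)=0\}$, with $u$ unique modulo $\F_q^*$. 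Since $G$ has rank $K$, $c=uG$ is nonzero iff $u$ is nonzero, and $c'\in\F_q c$ iff $u'\in\F_q u$.

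The core claim I would prove is: for each nonzero $u$, the codeword $c=uG$ is outer minimal iff $\langle \mV(\mC)\cap \mH_u\rangle_{\F_q}=\mH_u$. For the reverse direction, assume the spanning condition and let $c'=u'G$ satisfy the hypothesis of Definition \ref{def:outercodeword}. A direct check at each $v\in\mV(\mC)\cap \mH_u$, splitting the index $j$ according to whether $c_j=0$ (so $c'_j=0$ by hypothesis) or $c_j\neq 0$ (so $c'_j=\lambda_j c_j$ with $\lambda_j\in\F_q$), shows that $\mathrm{Tr}(u'\cdot v)=0$ in both cases; hence $\mV(\mC)\cap\mH_u\subseteq\mH_{u'}$. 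The spanning hypothesis forces $\mH_u\subseteq\mH_{u'}$, so $\mH_u=\mH_{u'}$ and $u'\in\F_q u$, whence $c'\in\F_q c$.

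For the forward direction, suppose $c$ is outer minimal and, by contradiction, $\mV(\mC)\cap\mH_u$ fails to span $\mH_u$. Then it is contained in some $\mH_{u'}$ with $u'\notin\F_q u$. Writing $W_j:=\langle v_j\rangle_{\F_{q^h}}$ and examining the inclusion on each $W_j$: when $c_j=0$, the whole $W_j$ lies in $\mH_u\cap\mH_{u'}$ and non-degeneracy of the trace pairing on $\F_{q^h}$ forces $u'\cdot v_j=0$, so $c'_j=0$; when $c_j\neq 0$, the kernel of the nonzero $\F_q$-linear functional $\mu\mapsto\mathrm{Tr}(\mu c_j)$ on $\F_{q^h}$ must be contained in the kernel of $\mu\mapsto\mathrm{Tr}(\mu(u'\cdot v_j))$, and, being $\F_q$-hyperplanes of $\F_{q^h}$, the two coincide; this forces $u'\cdot v_j=\lambda_j c_j$ for some $\lambda_j\in\F_q$, again by non-degeneracy. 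Hence $c'=u'G$ satisfies the hypothesis of Definition \ref{def:outercodeword} but is not in $\F_q c$, contradicting the outer minimality of $c$.

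Combining with characterization (d) of Theorem \ref{thm:equivalence}, $\mC$ is outer minimal iff $\langle \mV(\mC)\cap\mH_u\rangle_{\F_q}=\mH_u$ for every nonzero $u\in\F_{q^h}^K$, iff every nonzero codeword of $\mC$ is outer minimal. The main technical obstacle is the case analysis at a single $v_j$ with $c_j\neq 0$: aligning the geometric condition ``$W_j\cap\mH_u\subseteq\mH_{u'}$'' with the algebraic condition ``$c'_j=\lambda_j c_j$ with $\lambda_j\in \F_q$'' hinges on the fact that two $\F_q$-linear functionals on $\F_{q^h}$ with the same nonzero kernel are $\F_q$-proportional, which together with non-degeneracy of the trace pairing delivers the scalar $\lambda_j$.
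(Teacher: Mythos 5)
Your proof is correct, but it takes a genuinely different route from the paper's. The paper works through characterization (b) of Theorem~\ref{thm:equivalence}: it fixes an $\F_q$-linear map $\pi$ with $\pi(\F_{q^h})=\mS_q(h)$, forms the concatenated code $\mS_q(h)\square_\pi\mC$, and shows directly that the support-containment condition for minimality of the concatenation translates block-by-block into the hypothesis of Definition~\ref{def:outercodeword} on the original codewords (using that $\pi$ is injective and $\F_q$-linear, so $\pi(c'_i)=\lambda\pi(c_i)$ with $\lambda\in\F_q$ iff $c'_i=\lambda c_i$). You instead go through characterization (d), parameterizing $\F_q$-hyperplanes of $\F_{q^h}^K$ by nonzero $u$ via the trace pairing, identifying the codeword $c=uG$ with the hyperplane $\mH_u$, and matching the spanning condition $\langle\mV(\mC)\cap\mH_u\rangle_{\F_q}=\mH_u$ with outer minimality of $c$. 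Both are sound; the paper's version is shorter once Theorem~\ref{thm:field_red+simplex} is in hand, while your trace-duality version is more self-contained and makes explicit the codeword-to-hyperplane correspondence that underlies the whole outer-minimal picture — in particular it cleanly isolates the key technical fact (two nonzero $\F_q$-linear functionals on $\F_{q^h}$ with the same kernel are $\F_q^*$-proportional, and the trace pairing is non-degenerate) which the paper hides inside the simplex-code concatenation. One small gap you should fill in the forward direction: you assert without justification that a proper $\F_q$-subspace of $\mH_u$ is contained in some $\mH_{u'}$ with $u'\notin\F_q u$; this is true (any subspace of $\F_q$-codimension at least $2$ lies in at least two distinct hyperplanes), but deserves a line.
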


\begin{proof}
    ($\Rightarrow$) Assume $\mC$ is outer minimal. Let $\pi:\F_{q^h}\to \F_q^{\frac{q^h-1}{q-1}}$ be an $\F_q$-linear map such that $\pi(\F_{q^h})=\mS_q(h)$. Consider the concatenated code \begin{equation}\label{eq:conc_simplex}
        \mS_q(h)\square_\pi \mC=\{(\pi(c_1),\ldots,\pi(c_N))\St c=(c_1,\ldots,c_N)\in\mC\}.
    \end{equation}
    Then, $\mS_q(h)\square_\pi \mC$ is a minimal code by Theorem \ref{thm:equivalence}. Hence, for every $v,v^\prime\in\mS_q(h)\square_\pi \mC$ such that $\sH(v^\prime)\subseteq \sH(v)$, there is a scalar $\lambda\in\F_q$ such that $v^\prime=\lambda v$. In particular, there are two codewords $c,c^\prime\in\mC$ such that 
    $v=(\pi(c_1),\ldots,\pi(c_N))$ and $v^\prime=(\pi(c_1'),\ldots,\pi(c_N'))$ and $\pi(c_i')=\lambda\pi(c_i)$ for every $i\in\{1,\ldots,N\}$. Since $\pi$ is an $\F_q$-linear map and it sends nonzero elements into nonzero elements, we have that $\sH(c')\subseteq\sH(c)$ and $c'=\lambda c$. In other words, the codewords of $\mC$ are all outer minimal according to Definition \ref{def:outercodeword}.
    
    \noindent
     ($\Leftarrow$) Assume that all the codewords of $\mC$ are outer minimal and consider the concatenation $\mS_q(h)\square_\pi \mC$ as in   \eqref{eq:conc_simplex}. Consider $v=\pi(c)$, $v'=\pi(c') \in\mS_q(h)\square_\pi \mC$, for $c,c^\prime\in\mC$ such that $\sH(v')\subseteq\sH(v)$. Then, by the linearity of $\pi$, we have that $\sH(c')\subseteq\sH(c)$. By assumption, if for every $i \in\sH(c)$ there is $\lambda_i\in\F_q$ such that $c_i^\prime=\lambda_i c_i$, then there is $\lambda\in\F_q$ such that $c'=\lambda c$. This means that $v'=\pi(c')=\lambda\pi(c)=\lambda v$, i.e. $\mS_q(h)\square_\pi \mC$ is minimal and by Theorem \ref{thm:equivalence}, $\mC$ is outer minimal.
\end{proof}

As an easy consequence, we get the following result, which is a slight generalization of \cite[Theorem 2.2]{bartoli2021small}. 

\begin{corollary}[Outer AB condition]
Let $\mC$ be an $[N,K,D]_{q^h}$ code and $W:=\max_{c\in \mC}\wt(\mC)$. If 
\[\frac{W}{D}<\frac{q}{q-1},\] 
then $\mC$ is outer minimal. 
\end{corollary}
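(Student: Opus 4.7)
The plan is to mimic the standard proof of the Ashikhmin--Barg condition, but invoke Proposition \ref{prop:outercodeword} to reduce the task to showing that every nonzero codeword of $\mC$ is outer minimal in the sense of Definition \ref{def:outercodeword}. So I fix a nonzero $c \in \mC$ and a codeword $c' \in \mC$ with $\sH(c') \subseteq \sH(c)$ and with the property that for every $i \in \sH(c)$ there is some $\lambda_i \in \F_q$ such that $c'_i = \lambda_i c_i$. The goal is to force $c' \in \langle c \rangle_{\F_q}$.

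The key observation is that, thanks to the outer-minimal setup, the ratios $\lambda_i$ all lie in $\F_q$, so it makes sense to consider the $q$ codewords $c' - \lambda c$ as $\lambda$ ranges over $\F_q$. For $i \notin \sH(c)$, the $i$-th coordinate of $c' - \lambda c$ vanishes for every $\lambda$ (since $c_i = 0$ and $c'_i = 0$). For $i \in \sH(c)$, that coordinate equals $(\lambda_i - \lambda) c_i$, which vanishes exactly when $\lambda = \lambda_i$. A double count over $(i,\lambda)$ then gives
\[
\sum_{\lambda \in \F_q} \wt(c' - \lambda c) \;=\; \sum_{i \in \sH(c)} \bigl|\{\lambda \in \F_q : \lambda \neq \lambda_i\}\bigr| \;=\; (q-1)\,\wt(c).
\]

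Now I argue by contradiction: suppose $c' \notin \langle c \rangle_{\F_q}$. Then $c' - \lambda c \neq 0$ for every $\lambda \in \F_q$, so each of these $q$ codewords has weight at least $D$, and the identity above yields
\[
qD \;\leq\; \sum_{\lambda \in \F_q} \wt(c' - \lambda c) \;=\; (q-1)\,\wt(c) \;\leq\; (q-1) W,
\]
i.e.\ $W/D \geq q/(q-1)$, contradicting the hypothesis. Hence $c' = \lambda c$ for some $\lambda \in \F_q$, so $c$ is outer minimal, and Proposition \ref{prop:outercodeword} finishes the proof.

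I do not foresee a real obstacle: the only subtle point is that the averaging trick underlying the classical AB condition only pays off when the ratios $c'_i/c_i$ are forced to live in $\F_q$, which is precisely the content of the outer-minimal hypothesis in Definition \ref{def:outercodeword}. Once this is observed, the counting and the weight inequalities go through verbatim, and no new ingredient is needed beyond Proposition \ref{prop:outercodeword}.
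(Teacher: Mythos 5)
Your proof is correct and follows exactly the route the paper intends: reduce via Proposition \ref{prop:outercodeword} to showing every nonzero codeword is outer minimal, then run the standard Ashikhmin--Barg averaging argument restricted to scalars $\lambda \in \F_q$, which is admissible precisely because the premise of Definition \ref{def:outercodeword} already forces the ratios $c'_i/c_i$ into $\F_q$. The double count and the $qD \le (q-1)W$ contradiction are both sound, so nothing is missing.
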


\begin{remark}
    Let $\mC$ be an $[N,K,D]_{q^h}$ code satisfying the AB condition. Then $\mC$ satisfies the outer AB condition.
\end{remark}

\begin{proposition}
Let $\mC$ be an $[N,K]_{q^h}$ minimal code. Then, $\mC$ is outer minimal. Equivalently, if $\mP\subseteq \PG(K-1,q^h)$ is a strong blocking set, then $\mP$ is an outer strong blocking set. 
\end{proposition}
\begin{proof}
Since $\mC$ is an $[N,K]_{q^h}$ minimal code, every nonzero codeword $c$ in $\mC$ is minimal. In particular, for all $c'\in\mC$ with $\sigma(c')\subseteq \sigma(c)$, there exists $\lambda\in\F_{q^h}$ such that $c'=\lambda c$. Assume that in addition for every $i\in\sigma(c)$, there is $\mu_i\in\F_q$ such that $c_i'=\mu_i c_i$. Then, we have that $\mu_i=\lambda$ for every $i\in\sigma(c)$, and hence $\lambda\in\F_q$. This implies that $c$ is outer minimal. By Proposition \ref{prop:outercodeword}, this shows that $\mC$ is outer minimal.
\end{proof}

In Figure \ref{fig:graphic} an illustrative summary of the above results is provided.

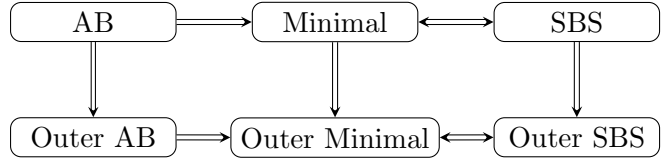
\begin{figure}[ht!]
    \centering
    \begin{tikzpicture}[
node distance = 10mm and 10mm, 
  box/.style = {draw, rounded corners, 
                 minimum width=22mm, minimum height=4mm, align=center},
                  > = {stealth},
  bend angle = 15,
         auto = right,
                        ]
\node (n1)  [box] {AB};
\node (n2)  [box, below=of n1]    {Outer AB};
\node (n3)  [box, right=of n1]    {Minimal};
\node (n4)  [box, below=of n3]    {Outer Minimal};
\node (n5)  [box, right=of n3]   {SBS}; 
\node (n6)  [box, below=of n5]   {Outer SBS};

\draw[double distance=1.2pt,-{Implies},double,->] (n1) to  (n2);
\draw[double distance=1.5pt,double,->] (n1) to  (n3);
\draw[double distance=1.5pt,double,->] (n3) to (n4);
\draw[double distance=1.5pt,double,->] (n2) to (n4);
\draw[double distance=1.5pt,double,<->] (n3) to (n5);
\draw[double distance=1.5pt,double,<->] (n4) to (n6);
\draw[double distance=1.5pt,double,->] (n5) to (n6);
     \end{tikzpicture}
 
    \caption{Relations.}
    \label{fig:graphic}
\end{figure}

We conclude the section with a lower bound on the minimum distance of outer minimal codes. 

\begin{theorem}\label{thm:minD}
Let $\mathcal{C}$ be an outer minimal $[N,K]_{q^h}$ code. Then    
\[d(\mathcal{C})\geq \left\lceil \frac{(q-1)(Kh-1)+1}{(q-1)(h-1)+1}\right\rceil.\]
\end{theorem}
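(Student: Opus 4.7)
The plan is to leverage the equivalences of Theorem~\ref{thm:equivalence}: since $\mC$ is outer minimal, the concatenation $(\mI_1,\ldots,\mI_N)\square_{\boldsymbol{\pi}}\mC$ is a minimal code for \emph{every} sequence of minimal inner $[n_j,h]_q$ codes and \emph{every} admissible tuple $\boldsymbol{\pi}$ of $\F_q$-linear injections with $\pi_j(\F_{q^h})=\mI_j$. I will exploit this freedom to obtain, for one carefully chosen concatenation, both a lower bound on its minimum distance coming from Theorem~\ref{thm:mindis} and a matching upper bound expressed in terms of $d(\mC)$.

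For the inner code I take a single code $\mI$, the tetrahedron code in $\PG(h-1,q)$, which by the remark following Theorem~\ref{thm:mindis} is a minimal $[n_\mI,h]_q$ code whose minimum distance equals $(q-1)(h-1)+1$. Fix a codeword $c^{*}\in\mC$ with $\wt(c^{*})=d(\mC)$ and, for each $j\in\sH(c^{*})$, fix a minimum weight codeword $m_j\in\mI$. Since both $c^{*}_j$ and $m_j$ are nonzero, I extend $\{c^{*}_j\}$ to an $\F_q$-basis of $\F_{q^h}$ and $\{m_j\}$ to an $\F_q$-basis of $\mI$, and let $\pi_j:\F_{q^h}\to\F_q^{n_\mI}$ be the $\F_q$-linear injection realizing the corresponding basis map; for $j\notin\sH(c^{*})$ I pick $\pi_j$ to be any $\F_q$-linear injection with image $\mI$.

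With these choices, $(\mI,\ldots,\mI)\square_{\boldsymbol{\pi}}\mC$ is a minimal $[Nn_\mI,Kh]_q$ code by Theorem~\ref{thm:equivalence}, hence Theorem~\ref{thm:mindis} gives
\[d((\mI,\ldots,\mI)\square_{\boldsymbol{\pi}}\mC)\geq(q-1)(Kh-1)+1.\]
On the other hand, the codeword of the concatenation produced by $c^{*}$ has, by construction, weight
\[\sum_{j\in\sH(c^{*})}\wt(\pi_j(c^{*}_j))=d(\mC)\cdot((q-1)(h-1)+1),\]
so this quantity upper bounds the minimum distance of $(\mI,\ldots,\mI)\square_{\boldsymbol{\pi}}\mC$. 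Comparing the two inequalities and using the integrality of $d(\mC)$ yields the claim. The only slightly delicate step is engineering the $\pi_j$'s so that $\pi_j(c^{*}_j)$ lands on a prescribed minimum weight codeword of $\mI$, but this is routine linear algebra, so I do not foresee any real obstacle.
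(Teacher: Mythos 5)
Your proof is correct and takes essentially the same approach as the paper: both select a minimum-weight codeword of $\mC$, rig the inner maps $\pi_j$ so that each nonzero coordinate lands on a minimum-weight codeword of the tetrahedron code (giving a concatenated codeword of weight exactly $d(\mC)\cdot\big((q-1)(h-1)+1\big)$), and then invoke Theorem~\ref{thm:mindis} on the resulting minimal concatenated code. The only difference is presentational -- you spell out the ``appropriate choice'' of the $\pi_j$'s via basis extension, which the paper leaves implicit.
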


\begin{proof}
Let $\mI$ be an $h$-dimensional code over $\F_q$. It is clear that there exists an appropriate choice of $\pi_1,\ldots,\pi_N$ such that the concatenation $\mI\square_{\boldsymbol{\pi}}\mC$ has exactly minimum distance $d(\mathcal{C})\cdot d(\mathcal{I})$. This is obtained by taking a nonzero codeword $c$ in $\mathcal{C}$ of minimum weight and choosing $\pi_1,\ldots,\pi_N$
so that the image $\pi_i(c_i)$ of each nonzero entry $c_i$ of $c$ has minimum weight in $\mathcal I$. Now, suppose that $\mathcal{C}$ is outer minimal, so that $\mathcal{I} \square \mathcal{C}$ is minimal. Then, by Theorem \ref{thm:mindis},
\[d(\mathcal{I} \square_{\boldsymbol{\pi}} \mathcal{C})\geq (q-1)(Kh-1)+1.\]
So 
\[d(\mathcal{C})\geq \left\lceil\frac{(q-1)(Kh-1)+1}{d(\mathcal{I})}\right\rceil\]
If we choose $\mathcal{I}$ to be the tetrahedron, we get the statement.
\end{proof}

\begin{remark}
 Consider the case $K=2$ in Theorem \ref{thm:minD}. We get that the minimum distance of an outer minimal $[N,2]_{q^h}$ code $\mC$ is at least 
 $$ \left\lceil \frac{(q-1)(2h-1)+1}{(q-1)(h-1)+1}\right\rceil=2.$$
 If we assume that the code $\mC$ is projective, then the associated projective $[N,2]_{q^h}$ system is an arc in the projective line $\PG(1,q^h)$. Thus, the code $\mC$ is an MDS code, and its minimum distance is $N-1$. Since adding points with multiplicity to the projective system can only increase the minimum distance of the associated code, any outer minimal $[N,2]_{q^h}$ code has distance at least 
 $$\min\{ R \in \mathbb N \St \mbox{ there exists an outer strong blocking set in $\PG(1,q^h)$ of size } R \} - 1.$$
  We will see in the next section (Remark \ref{rem:minlength}) that this is equal to
  $$ \begin{cases} 3 & \mbox{ if }q>2, \\
  2 & \mbox{ if } q=2.\end{cases}$$
  Thus, for $K=2$ the lower bound of Theorem \ref{thm:minD} is attained only for $q=2$.
  
  If we consider the case $K=3$ and $h=2$, from Theorem \ref{thm:minD} we get that the minimum distance of an outer minimal $[N,3]_{q^2}$ code $\mC$ is at least 
  $$\left\lceil\frac{5(q-1)+1}{(q-1)+1}\right\rceil=\begin{cases} 5 & \mbox{ if } q>4, \\
  4 & \mbox{ if } q\in\{3,4\},\\
 3 & \mbox{ if } q=2. 
  \end{cases}$$
  For $q>2$, the construction in \cite[Theorem 3.15]{bartoli2022cutting}, provides an outer minimal $[7,3,5]_{q^2}$ code; see also Theorem \ref{thm:cutting_sevenlines}. Thus, when $K=3$ and  $h=2$, the bound of Theorem \ref{thm:minD} is attained  for every $q>4$.
\end{remark}

\section{Bounds of the Size of Outer Strong Blocking Sets}\label{sec:bounds}

Adding a point to an outer strong blocking set gives a larger outer strong blocking set. For this reason, it is natural to look for small sets with this property. In particular, in this section we will present lower and upper bounds on the cardinality of the smallest outer strong blocking sets for a given dimension and ground field. 

\subsection{Lower Bound}

We start by introducing a general property on a set of subsets of $\PG(k-1,q)$ such that their union is a strong blocking set.  This is a generalization of the property on sets of lines given by Fancsali and Sziklai in \cite[Theorem 11]{fancsali2014lines}.
\begin{definition}
    Let $U_1,\ldots, U_N$  be subsets of $\PG(k-1,q)$ and let $(h_i-1)=\dim(\langle U_i\rangle)$ for $i\in [N]$. We say that the collection $\{U_1,\ldots, U_N\}$ has the \textbf{avoidance property} if there exists no codimension $2$ subspace $\Lambda$ of $\PG(k-1,q)$ such that 
    $$ \dim(\langle \Lambda\cap U_i\rangle )\geq h_i-2, \quad \textnormal{ for every } i \in [N].$$
\end{definition}

\begin{theorem}
    If $\{U_1,\ldots, U_N\}$ is a collection of subsets of $\PG(k-1,q)$ having the avoidance property, then $U:=\bigcup\limits_{i=1}^N U_i$ is a strong blocking set.
\end{theorem}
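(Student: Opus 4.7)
The plan is to prove the contrapositive. Assume that $U:=\bigcup_{i=1}^N U_i$ is not a strong blocking set, so that there exists a hyperplane $H$ of $\PG(k-1,q)$ with $\langle U\cap H\rangle \subsetneq H$. Since $H$ has projective dimension $k-2$, this forces $\dim\langle U\cap H\rangle\leq k-3$, and hence $\langle U\cap H\rangle$ is contained in some hyperplane $\Lambda$ of $H$. Such a $\Lambda$ is a codimension-$2$ subspace of $\PG(k-1,q)$, and the goal is to show that it violates the avoidance condition.

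The first observation I would establish is that $U_i\cap \Lambda = U_i\cap H$ for every $i\in [N]$. The inclusion $\Lambda\subseteq H$ gives $U_i\cap\Lambda\subseteq U_i\cap H$, while the chain
\[U_i\cap H\subseteq U\cap H\subseteq \langle U\cap H\rangle\subseteq \Lambda,\]
combined with $U_i\cap H\subseteq U_i$, gives the reverse inclusion.

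Next I would estimate $\dim\langle U_i\cap\Lambda\rangle$. Reading $U_i$ as (the pointset of) the projective subspace $\langle U_i\rangle$ of dimension $h_i-1$, which is the natural setting generalising the union-of-lines setup of \cite{fancsali2014lines}, there are two cases: either $\langle U_i\rangle\subseteq H$, in which case $U_i\cap H=U_i$ has dimension $h_i-1$; or $\langle U_i\rangle\not\subseteq H$, in which case $U_i\cap H$ is a hyperplane of $\langle U_i\rangle$ and hence has dimension $h_i-2$. In either case
\[\dim\langle U_i\cap \Lambda\rangle=\dim\langle U_i\cap H\rangle\geq h_i-2,\]
so $\Lambda$ is a codimension-$2$ subspace violating the avoidance property, which is the desired contradiction.

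The main subtle point is the choice of $\Lambda$ \emph{inside} $H$ (as a hyperplane of $H$, not merely as some codimension-$2$ subspace of $\PG(k-1,q)$ containing $\langle U\cap H\rangle$); this is exactly what forces $U_i\cap\Lambda$ to coincide with $U_i\cap H$, so that the dimension count can be carried out using the subspace structure of $U_i$ inside $H$.
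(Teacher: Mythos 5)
Your argument is correct and follows essentially the same route as the paper: pick a hyperplane $H$ witnessing the failure, pass to a codimension-$2$ subspace $\Lambda$ containing $\langle U\cap H\rangle$, and observe that $\dim\langle U_i\cap\Lambda\rangle\geq h_i-2$ for every $i$, contradicting the avoidance property. The one place you go beyond the paper is your insistence on choosing $\Lambda$ as a hyperplane \emph{of} $H$ so that $U_i\cap\Lambda=U_i\cap H$; this equality is not actually needed, since the weaker inclusion $U_i\cap H\subseteq \langle U\cap H\rangle\subseteq\Lambda$, hence $U_i\cap H\subseteq U_i\cap\Lambda$, already gives $\dim\langle U_i\cap\Lambda\rangle\geq\dim\langle U_i\cap H\rangle$, which is all the argument uses (and is what the paper does).

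One remark worth making: you explicitly treat $U_i$ as the pointset of the subspace $\langle U_i\rangle$ when invoking the dichotomy ``either $\langle U_i\rangle\subseteq H$ or $U_i\cap H$ is a hyperplane of $\langle U_i\rangle$.'' The paper states the theorem for arbitrary subsets but its proof asserts $\dim\langle H\cap U_i\rangle\geq h_i-2$ without justification, and this inequality is false in general (e.g.\ take $U_i$ a pair of points off $H$; then $U_i\cap H=\emptyset$ while $h_i-2=0$). So the paper's proof also tacitly requires the subspace structure; you have simply made the hypothesis explicit. Given that all subsequent applications in the paper concern collections of subspaces, this is a reasonable reading rather than a deviation.
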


\begin{proof}
Let $h_i-1=\dim(\langle U_i\rangle)$ for $i\in [N]$. Let $H$ be a hyperplane in $\PG(k-1,q)$. Assume by contradiction that $H \cap U$ is contained in a $(k-3)$-dimensional subspace $\Lambda$ of $\PG(k-1,q)$. Then, $\Lambda$ contains each $ \langle H \cap U_i\rangle$. However, $\dim(\langle H\cap U_i\rangle)\geq \dim(\langle U_i\rangle)-1=h_i-2$ for each $i\in[N]$, and this contradicts the hypothesis of $\{U_1,\ldots,U_N\}$ having the avoidance property.
\end{proof}

Now, we will restrict our study to the case of collections of subspaces with the same dimension. 
Moreover, we will distinguish between collections of subspaces of $\PG(k-1,q)$ having the avoidance property and those not having this property, but whose union is anyway a strong blocking set. In both cases, we will give a lower bound on their size, by generalizing the arguments given in \cite[Theorem 14]{fancsali2014lines}. In particular, the following two results are a generalization of \cite[Lemma 12]{fancsali2014lines} and \cite[Lemma 13]{fancsali2014lines}, respectively.

\begin{lemma}\label{lem:number_hspaces_q+1}
 Let $\{U_1, \ldots,  U_N\}$ be a collection of $(h-1)$-dimensional subspaces of $\PG(k-1,q)$ not satisfying the avoidance property. If $U:=\bigcup\limits_{i=1}^N U_i$ is a  strong blocking set, then $N\geq q+1$. 
\end{lemma}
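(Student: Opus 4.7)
The plan is to extract a codimension-$2$ subspace from the failure of the avoidance property, then pigeonhole the $U_i$'s among the $q+1$ hyperplanes of the pencil through it.

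First, since the avoidance property fails for $\{U_1,\dots,U_N\}$ and every $U_i$ has projective dimension $h-1$, there exists a codimension-$2$ subspace $\Lambda\subseteq\PG(k-1,q)$ such that $\dim(\Lambda\cap U_i)\geq h-2$ for every $i\in[N]$. Because $\dim(U_i)=h-1$, this leaves only two possibilities for each $i$: either $U_i\subseteq\Lambda$, or $\Lambda\cap U_i$ is a hyperplane of $U_i$ (i.e.\ $\dim(\Lambda\cap U_i)=h-2$ and $U_i\not\subseteq\Lambda$).

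Next, consider the pencil of hyperplanes through $\Lambda$, namely the $q+1$ hyperplanes $H_0,\dots,H_q$ of $\PG(k-1,q)$ containing $\Lambda$. Fix one such $H_j$. Since $U$ is a strong blocking set, $\langle U\cap H_j\rangle=H_j$, and in particular $U\cap H_j\not\subseteq\Lambda$. Hence there exists some index $i$ with $U_i\cap H_j\not\subseteq\Lambda$. I claim this forces $U_i\subseteq H_j$ and $U_i\not\subseteq\Lambda$. Indeed, if $U_i\subseteq\Lambda$ then $U_i\cap H_j=U_i\subseteq\Lambda$, contradicting the choice. So $U_i\not\subseteq\Lambda$, and by step one $\Lambda\cap U_i$ is a hyperplane of $U_i$. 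Now if $U_i\not\subseteq H_j$, then $H_j\cap U_i$ would also be a hyperplane of $U_i$, and being contained in $U_i\cap H_j$ together with $\dim(U_i\cap H_j)\geq\dim(U_i\cap\Lambda)=h-2$ would give equality $U_i\cap H_j=U_i\cap\Lambda\subseteq\Lambda$, again a contradiction. Thus $U_i\subseteq H_j$.

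Finally, each $U_i$ with $U_i\not\subseteq\Lambda$ can be contained in at most one hyperplane of the pencil: if $U_i\subseteq H_j$ and $U_i\subseteq H_{j'}$ for $j\neq j'$, then $U_i\subseteq H_j\cap H_{j'}=\Lambda$, contradicting $U_i\not\subseteq\Lambda$. Hence the map sending each $H_j$ to some $U_i\subseteq H_j\setminus\Lambda$ (as produced above) is injective from $\{H_0,\dots,H_q\}$ to $\{U_1,\dots,U_N\}$, yielding $N\geq q+1$.

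The proof is essentially a straightforward pigeonhole argument; the only slightly delicate point is the dichotomy in the first step combined with the observation that a hyperplane section that avoids $\Lambda$ must actually contain $U_i$ entirely. This dichotomy is what makes the argument clean and uses in a key way the assumption that all $U_i$'s share the same dimension, so no real obstacle is expected beyond presenting the case analysis carefully.
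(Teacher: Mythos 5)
Your proof is correct and follows essentially the same route as the paper's: extract the codimension-$2$ subspace $\Lambda$ from the failure of the avoidance property, use the strong blocking property to find, for each of the $q+1$ hyperplanes in the pencil through $\Lambda$, a member $U_i$ contained in that hyperplane but not in $\Lambda$, and conclude by injectivity. The only cosmetic difference is that the paper indexes the pencil by the $q+1$ points of a line disjoint from $\Lambda$, whereas you refer to the pencil directly; both arguments are otherwise identical, down to the dichotomy $U_i\subseteq\Lambda$ versus $\dim(\Lambda\cap U_i)=h-2$.
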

\begin{proof}
    Since $\{U_1, \ldots,  U_N\}$  does not have the avoidance property, there is a codimension~$2$ subspace $\Lambda$ of $\PG(k-1,q)$ such that 
    $$ \dim(\Lambda\cap U_i)\geq h-2, \quad \textnormal{ for every } i \in [N].$$
    Let $\ell=\{P_1,\ldots,P_{q+1}\}$ be a line disjoint from $\Lambda$. For any $i\in[q+1]$, consider the hyperplane $\Pi_i$ spanned by $\Lambda$ and $P_i$. Since $U$ is a strong blocking set, there exists a point~$Q_i$ such that
    $$ Q_i\in(\Pi_i\cap U)\setminus \Lambda.$$
    Hence, there exist $s_i\in[N]$, such that $Q_i\in U_{s_i}\setminus \Lambda$.  Therefore, $U_{s_i}\subseteq \Pi_i$, but $U_{s_i}\not\subseteq \Lambda$. Now, for $j \in[q+1]$ with $j\neq i$, observe that $U_{s_j}\subseteq \Pi_j$ and $U_{s_j} \not\subseteq \Lambda=\Pi_i\cap\Pi_j$. Thus, necessarily $U_{s_i}\neq U_{s_j}$, and $N$ has to be at least the number of points in $\ell$, i.e. $q+1$.
\end{proof}

\begin{theorem}\label{thm:number_hspaces}
    The number of $(h-1)$-dimensional subspaces in $\PG(k-1,q)$ whose union is a strong blocking set is at least
    \[\min\left\{\left\lfloor \frac{k-1}{h}\right\rfloor+\left\lfloor\frac{k-2}{h-1}\right\rfloor+1,q+1\right\}.\]
\end{theorem}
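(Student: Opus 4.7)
The plan is to distinguish two cases according to whether the collection $\{U_1,\dots,U_N\}$ has the avoidance property, matching the two terms of the minimum. In the case where the collection does not have the avoidance property, Lemma~\ref{lem:number_hspaces_q+1} applies directly, giving $N \geq q+1$, and we are done.

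In the case where the collection does have the avoidance property, I would aim for $N \geq \lfloor(k-1)/h\rfloor + \lfloor(k-2)/(h-1)\rfloor + 1$ by generalizing the greedy argument used in the proof of \cite[Theorem 14]{fancsali2014lines} (the case $h=2$ of lines). First, extract from $\{U_1,\dots,U_N\}$ a maximal sub-collection $U_{i_1},\dots,U_{i_r}$ of pairwise skew subspaces, setting $\Sigma := \langle U_{i_1},\dots,U_{i_r}\rangle$ of projective dimension $rh-1$. By maximality of $r$, every remaining $U_\ell$ meets $\Sigma$ nontrivially, so each can only contribute at most $h-1$ to the projective dimension in any further extension. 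The first term $\lfloor(k-1)/h\rfloor$ would then arise as an upper bound on $r$: using the avoidance property applied to a codimension-$2$ subspace lying inside $\langle \Sigma\rangle$, one rules out the extremal case $r=\lfloor k/h\rfloor$ and obtains $r\leq \lfloor(k-1)/h\rfloor$. The remaining term $\lfloor(k-2)/(h-1)\rfloor+1$ would count how many of the remaining subspaces are needed for the SBS condition (not merely for the span): fixing a hyperplane $H \supseteq \Sigma$ and classifying each of the $N-r$ remaining $U_\ell$'s as either contained in $H$ (contributing an $(h-1)$-subspace to $U\cap H$) or not (contributing an $(h-2)$-subspace), the condition that $U\cap H$ spans $H$ would force the claimed extra count.

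The main obstacle is precisely this second step. A direct spanning argument would give only $\lceil(k-rh)/(h-1)\rceil$ further subspaces beyond the skew core, which is typically far smaller than $\lfloor(k-2)/(h-1)\rfloor+1-r$. Extracting the sharp number requires exploiting the SBS condition applied to several hyperplanes through $\Sigma$ simultaneously, combined with the avoidance witness for the codimension-$2$ flats contained in $\Sigma$. Pinning down this constant is where the proof will need the most care; the counting in \cite[Lemma 13]{fancsali2014lines} would serve as a blueprint for the $h=2$ instance, and the higher-dimensional generalization proceeds in the same spirit but has to track the two parameters $h$ and $h-1$ separately.
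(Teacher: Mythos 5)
Your case split on the avoidance property is logically equivalent to what the paper proves---the claim ``avoidance holds $\Rightarrow N\ge M+1$'' is the contrapositive of ``$N\le M\Rightarrow$ avoidance fails,'' where $M=\lfloor(k-1)/h\rfloor+\lfloor(k-2)/(h-1)\rfloor$---and your handling of the avoidance-fails branch via Lemma~\ref{lem:number_hspaces_q+1} is correct. But the gap you flag in the second branch is real, and the maximal-skew-subcollection route does not close it: as you observe, the spanning count $\lceil(k-rh)/(h-1)\rceil$ is generally too small, and moreover an \emph{upper} bound on the skew number $r$ cannot by itself deliver a lower bound on $N$.

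The paper proves the contrapositive directly and constructively, never assuming the avoidance property. Given at most $M$ subspaces whose union is a strong blocking set, it exhibits a codimension-$2$ flat $H$ meeting each $U_i$ in dimension at least $h-2$, thereby forcing the avoidance property to fail. Since $\lfloor(k-1)/h\rfloor\cdot h\le k-1$, a hyperplane $\Pi$ of $\PG(k-1,q)$ contains $U_1,\dots,U_{\lfloor(k-1)/h\rfloor}$. Each remaining $U_\ell$ meets $\Pi$ in at least an $(h-2)$-space $W_\ell$, and since $\lfloor(k-2)/(h-1)\rfloor\cdot(h-1)\le k-2$, a hyperplane $H$ of $\Pi$ contains all the $W_\ell$. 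This $H$ has codimension $2$ in $\PG(k-1,q)$ and cuts every $U_i$ in dimension at least $h-2$: the $U_j\subseteq\Pi$ because $H$ is a hyperplane of $\Pi$, the others because $W_\ell\subseteq H$. Lemma~\ref{lem:number_hspaces_q+1} then gives $N\ge q+1$. The device you need is this two-stage dimension count---pack $\lfloor(k-1)/h\rfloor$ subspaces into a hyperplane, then pack codimension-$1$ traces of the rest into a hyperplane of that hyperplane---not a maximal skew core.
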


\begin{proof}
    Consider the set $\mathcal U=\left\{U_1,\ldots,U_{\left\lfloor \frac{k-1}{h}\right\rfloor},U_{\left\lfloor \frac{k-1}{h}\right\rfloor+1},\ldots, U_{\left\lfloor \frac{k-1}{h}\right\rfloor+\left\lfloor \frac{k-2}{h-1}\right\rfloor}\right\}$ consisting of $(h-1)$-dimensional subspaces of $\PG(k-1,q)$ whose union is a strong blocking set. By an easy dimension argument, we can deduce that there exists a hyperplane $\Pi$ of $\PG(k-1,q)$ containing $U_1,\ldots,U_{\left\lfloor \frac{k-1}{h}\right\rfloor}$. For $i\in\left[\left\lfloor \frac{k-2}{h-1}\right\rfloor\right]$, let 
    \[W_i\subseteq \Pi \cap U_{\left\lfloor \frac{k-1}{h}\right\rfloor+i}\]
    be an $(h-2)$-dimensional subspace. Again by an easy dimension argument, there exists $H\subseteq \Pi$ of dimension $k-2$ containing each $W_i$. Since $H$ is a hyperplane in $\Pi$, it intersects each $U_j$, $j\in\left[\left\lfloor \frac{k-1}{h}\right\rfloor\right]$, in a space of dimension at least $h-2$. Moreover, it intersects all the others in a space of dimension at least $h-2$ by construction. So $\mathcal U$ does not satisfy the avoidance property. Hence, its cardinality must be greater than $q+1$ by Lemma \ref{lem:number_hspaces_q+1}.
\end{proof}

The following is a generalization of \cite[Theorem 3.12]{heger2021short}. However, in contrast to the latter,  we do not make use of the Jamison's bound \cite{jamison1977covering} on the cardinality of affine blocking sets, since it gives a weaker result. 

\begin{theorem}
    The number of $(h-1)$-dimensional subspaces in $\PG(k-1,q)$ whose union is a strong blocking set is at least
    \[\left\lfloor \frac{k-1}{h}\right\rfloor+\left\lceil \frac{k}{h}\right\rceil.\]
\end{theorem}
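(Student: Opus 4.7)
The plan is to prove the bound by contradiction, using a pencil-of-hyperplanes argument. My first step would be to rewrite the target inequality in a cleaner form: setting $a:=\lfloor(k-1)/h\rfloor$, one checks that $\lceil k/h\rceil=a+1$ in all cases (writing $k-1=ah+r$ with $0\le r<h$, so $k/h=a+(r+1)/h$ with $(r+1)/h\in(0,1]$). Thus the bound becomes $N\ge 2a+1$, and I would assume for contradiction that $N\le 2a$.

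Next, I would partition $\{1,\dots,N\}=I_A\sqcup I_B$ with $|I_A|,|I_B|\le a$, and let $\Sigma_A:=\langle U_i : i\in I_A\rangle$ and $\Sigma_B:=\langle U_i : i\in I_B\rangle$. Each of these is a span of at most $a$ subspaces of projective dimension $h-1$, so $\dim\Sigma_A,\dim\Sigma_B\le ah-1\le k-2$, and I could extend each to a hyperplane $H_A\supseteq\Sigma_A$ and $H_B\supseteq\Sigma_B$. Because a strong blocking set spans the ambient space, $\langle U\rangle=\PG(k-1,q)$ for $U:=\bigcup_i U_i\subseteq\Sigma_A\cup\Sigma_B$; hence $\langle \Sigma_A,\Sigma_B\rangle=\PG(k-1,q)$, and so no single hyperplane can contain both, forcing $H_A\ne H_B$.

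The heart of the proof is then the pencil construction. Let $\Lambda:=H_A\cap H_B$, of codimension $2$. The pencil of hyperplanes through $\Lambda$ has cardinality $q+1\ge 3$ and contains $H_A,H_B$, so at least one further hyperplane $H$ in the pencil exists. A quick dimension check shows $H\cap H_A=\Lambda=H\cap H_B$, since $H\cap H_A$ is a codimension-$1$ subspace of $H_A$ containing the codimension-$1$ subspace $\Lambda$. Consequently,
\[
U\cap H \;\subseteq\; (\Sigma_A\cup\Sigma_B)\cap H \;\subseteq\; (H_A\cup H_B)\cap H \;=\; \Lambda,
\]
so $\langle U\cap H\rangle\subseteq\Lambda\subsetneq H$, violating the strong blocking property.

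The only delicate step is the verification that $H_A\ne H_B$, which hinges on the easy but essential fact that a strong blocking set must span the whole ambient projective space; beyond that, the argument is a short dimension chase. Notably, it avoids Jamison's bound, as advertised before the statement.
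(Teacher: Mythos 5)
Your proof is correct, and it takes a genuinely different route from the paper's. The paper's argument is asymmetric: it finds a single hyperplane $H$ containing $\lfloor (k-1)/h\rfloor$ of the subspaces, passes to the affine space $\PG(k-1,q)\setminus H$, observes that the remaining subspaces form an affine blocking set there, and then invokes the fact that an affine blocking set cannot live inside an affine hyperplane to force at least $\lceil k/h\rceil$ further subspaces, giving the bound as a sum of two separately-motivated counts. Your argument instead simplifies the target first via $\lceil k/h\rceil=\lfloor (k-1)/h\rfloor+1$, then makes a \emph{symmetric} split of the family into two halves, traps each half inside a hyperplane, and uses a third member of the pencil through their common codimension-$2$ intersection $\Lambda$ to produce a hyperplane $H$ with $\langle U\cap H\rangle\subseteq\Lambda\subsetneq H$, directly contradicting the strong blocking property. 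Both proofs hinge on the same dimension count $\lfloor(k-1)/h\rfloor\cdot h-1\le k-2$, but yours is entirely projective and self-contained: it avoids the affine--projective translation, the notion of affine blocking set, and the $q+1\ge 3$ pencil step is a cheap replacement for the affine-hyperplane argument. What the paper's version buys in return is a transparent reading of \emph{where each summand in the bound comes from} (one hyperplane worth of subspaces, plus an affine blocking set); your version, having collapsed the bound to $2a+1$, obscures that decomposition but is arguably shorter. One small remark: the pencil trick you use here is in the same spirit as the paper's preceding Lemma (the $q+1$ bound for collections without the avoidance property), so your proof also has the virtue of unifying the two lower-bound arguments under a single pencil-of-hyperplanes theme.
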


\begin{proof}
Let $\mathcal{S}$ be a set of $(h-1)$-dimensional subspaces of $\PG(k-1,q)$ whose union is a strong blocking set and suppose that a hyperplane $H\subseteq \PG(k-1,q)$ contains $t$ many $(h-1)$-dimensional subspaces of $\mathcal{S}$. Let us call $\mathcal T$ the set of such $t$ many $(h-1)$-dimensional subspaces. Let $S:=\bigcup_{s\in \mathcal{S}} s$. Then $S\setminus H$
is an affine  blocking set in $\PG(k-1,q)\setminus H\cong \mathrm{AG}(k-1,q)$. Moreover, since every $(h-1)$-dimensional subspace in $\mathcal T$ is entirely contained in $H$ we have
$$S\setminus H=\Big(\bigcup_{s\in \mathcal S\setminus\mathcal T}s\Big)\setminus H=\bigcup_{s\in\mathcal S\setminus \mathcal T}(s\setminus H). $$
Thus, $S\setminus H$ is an affine blocking set consisting of  the union of $|\mathcal S\setminus\mathcal T|=|\mathcal S|-t$ many $(h-1)$-dimensional affine subspaces. Since an affine blocking set cannot be contained in an affine hyperplane of $\mathrm{AG}(k-1,q)$, we need at least $\lceil \frac{k}{h}\rceil$ many $(h-1)$-dimensional affine subspaces. This means $|\mathcal S|-t\geq \lceil \frac{k}{h}\rceil$.
As $t$ many $(h-1)$-dimensional subspaces span a subspace of dimension at most $th-1$, we can find a hyperplane $H$ which contains at least $\left\lfloor \frac{k-1}{h}\right\rfloor$ many $(h-1)$-dimensional subspaces of $\mathcal{S}$ (note that we need $|\mathcal{S}|\geq \left\lfloor \frac{k-1}{h}\right\rfloor$ but this is clear because $S$ cannot be contained in a hyperplane). Therefore, putting everything together, we obtain
\[|\mathcal{S}|=t+|\mathcal S\setminus \mathcal T|\geq \left\lfloor \frac{k-1}{h}\right\rfloor+\left\lceil \frac{k}{h}\right\rceil. \qedhere\]
\end{proof}

\begin{remark}
 Let us underline that the results obtained so far in this section, are valid for ge\-neral sets of (not necessarily disjoint) subspaces of $\PG(k-1,q)$ and not only those belonging to a Desarguesian spread, that is, those coming from the field reduction map from $\PG\left(\frac{k}{h}-1,q^h\right)$. 
\end{remark}

The correspondence between outer strong blocking sets and outer minimal codes implies the following corollary.

\begin{corollary}\label{cor:bound_length}
The cardinality $N$ of an outer strong blocking set in $\PG(K-1,q^h)$, or equivalently the length $N$ of a $K$-dimensional outer minimal code defined over $\F_{q^h}$, satisfies the following inequality
    \[N\geq \max\left\{\min\left\{2K+\left\lfloor \frac{K-2}{h-1}\right\rfloor,q+1\right\},2K-1\right\}.\]
\end{corollary}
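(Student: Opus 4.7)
The plan is to apply the two lower bounds proved immediately above for unions of $(h-1)$-subspaces of $\PG(k-1,q)$ to the image of an outer strong blocking set under the field reduction map, which is exactly such a union inside $\PG(Kh-1,q)$.

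More concretely, let $\mP\subseteq \PG(K-1,q^h)$ be an outer strong blocking set with $|\mP|=N$. By definition, the set $\mF_q(\mP)$ is a strong blocking set of $\PG(Kh-1,q)$, and since the field reduction map is injective on points, $\mF_q(\mP)$ is the union of exactly $N$ pairwise distinct $(h-1)$-subspaces of $\PG(Kh-1,q)$. Setting $k=Kh$ in Theorem \ref{thm:number_hspaces} yields
\[
N\geq \min\left\{\left\lfloor \frac{Kh-1}{h}\right\rfloor+\left\lfloor\frac{Kh-2}{h-1}\right\rfloor+1,\,q+1\right\},
\]
and the routine simplifications $\lfloor (Kh-1)/h\rfloor=K-1$ and $\lfloor (Kh-2)/(h-1)\rfloor=K+\lfloor (K-2)/(h-1)\rfloor$ turn this into $N\geq \min\{2K+\lfloor (K-2)/(h-1)\rfloor,\,q+1\}$. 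Applying the subsequent affine-blocking-set theorem with $k=Kh$ instead gives $N\geq \lfloor(Kh-1)/h\rfloor+\lceil Kh/h\rceil=(K-1)+K=2K-1$. Taking the maximum of the two lower bounds yields the claim.

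No serious obstacle is expected; the only thing to double-check is the arithmetic with the floor functions (one must assume $K\geq 2$ to make $\lfloor(K-2)/(h-1)\rfloor$ meaningful, the $K=1$ case being trivially excluded since then $\PG(K-1,q^h)$ is a single point and the corollary's bound degenerates to $N\geq 1$). The equivalence between outer strong blocking sets in $\PG(K-1,q^h)$ and $K$-dimensional outer minimal codes of length $N$ over $\F_{q^h}$ used in the second formulation of the statement is already recorded in Theorem \ref{thm:equivalence} combined with the projective-system--code correspondence from Section \ref{sec:preliminaries}, so no extra work is needed to obtain the coding-theoretic form of the bound.
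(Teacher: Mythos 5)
Your proof is correct and matches the paper's intended approach: the paper treats this as an immediate corollary of Theorem \ref{thm:number_hspaces} and the subsequent affine-blocking-set theorem applied (with $k=Kh$) to the field reduction $\mF_q(\mP)$, which is exactly a union of $N$ distinct $(h-1)$-subspaces forming a strong blocking set, and your floor/ceiling arithmetic filling in the unstated computation is right (noting, as you should, that both the corollary's formula and your computation $\lfloor(Kh-2)/(h-1)\rfloor = K + \lfloor(K-2)/(h-1)\rfloor$ presuppose $h\geq 2$, the only case of interest).
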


\begin{remark}\label{rem:minlength}
 The bound in Corollary \ref{cor:bound_length} for $K=2$ is tight for every $q$. Indeed, in this case we have that an outer strong blocking set in $\PG(1,q^h)$ (equivalently, an outer minimal $[N,2]_{q^h}$ code) has size (respectively, length) at least 
 $$N \geq \begin{cases} 4  & \mbox{ if } q>2, \\
 3  & \mbox{ if } q=2.
 \end{cases}
 $$
 For $q>2$, the construction given in \cite[Theorem 4.2]{alfarano2022three} (see Theorem \ref{thm:4points} for a concrete explanation) provides an outer strong blocking set consisting of $4$ points. Furthermore, when $q=2$, it can be shown that any $3$ points in $\PG(1,2^h)$ form an outer strong blocking set. Indeed, every set of three points in $\PG(1,2^h)$ is $\mathrm{PGL}(2,2^h)$-equivalent to $\{[1:0], [0:1], [1:1]\}$, and it can be verified straightforwardly that their field reduction is
 $$ \{[a:0] \St a \in \PG(h-1,2)\}\cup\{[0:a] \St a \in \PG(h-1,2)\} \cup\{[a:a] \St a \in \PG(h-1,2)\},$$and it is a strong blocking set in $\PG(2h-1,2)$.
\end{remark}

\subsection{Existence Results}

We recall that, by Proposition \ref{prop:outercodeword}, a code is outer minimal if and only if all its nonzero codewords are outer minimal. We will use this property to get an upper bound on the length of the shortest outer minimal code for given dimension and ground field. As a byproduct, we will get the best-known general upper bound on the cardinality of the smallest strong blocking set. We would like to emphasize that our proof is genuinely coding-theoretical and it improves on previous results proved with geometric arguments in~\cite{heger2021short}. The proof of the following theorem is inspired by \cite{chabanne2013towards}.  

\begin{theorem}
    If 
    \[\binom{N-2}{K-2}_{q^h}\cdot \sum_{i=1}^N \binom{N}{i}(q^h-1)^i\left(q^i-q\right)<\binom{N}{K}_{q^h},\]
    then there exists an $[N,K]_{q^h}$ outer minimal code. In particular, $[N,K]_{q^h}$ outer minimal codes exist whenever
    \[N\geq \left\lceil \frac{2}{\log_{q^h}\left(\frac{q^{2h}}{q^{h+1}-q+1}\right)}\cdot K\right\rceil \]
\end{theorem}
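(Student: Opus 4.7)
The plan is a probabilistic/counting argument in the style of Chabanne--Cohen--Patey: I will count the $K$-dimensional subspaces of $\F_{q^h}^N$ that contain a witness to outer non-minimality and show, under the hypothesis, that this count is strictly less than the total $\binom{N}{K}_{q^h}$. By Proposition \ref{prop:outercodeword}, an $[N,K]_{q^h}$ code $\mC$ fails to be outer minimal exactly when it contains a pair $(c,c')$ with $c\neq 0$, $\sH(c')\subseteq \sH(c)$, $c'_j/c_j\in\F_q$ for every $j\in \sH(c)$, and yet $c'\notin \F_q c$; I will call such a $(c,c')$ a \emph{bad pair}. To count bad pairs in the ambient space $\F_{q^h}^N$ I would stratify by $i:=|\sH(c)|$: there are $\binom{N}{i}$ choices for the support of $c$ and $(q^h-1)^i$ choices for its nonzero entries, while after fixing $c$ any admissible $c'$ is determined by a tuple $(\lambda_j)_{j\in \sH(c)}\in\F_q^{\sH(c)}$ whose entries are not all equal (the $q$ constant tuples being exactly those producing $c'\in\F_q c$), yielding $q^i-q$ valid choices. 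Summing over $i$ recovers $S:=\sum_{i=1}^{N}\binom{N}{i}(q^h-1)^i(q^i-q)$.

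The delicate point — and the one real obstacle — is to verify that the two codewords of every bad pair are $\F_{q^h}$-linearly independent, for otherwise the multiplicity factor below would be wrong. If $c'=\mu c$ with $\mu\in\F_{q^h}$, then for each $j\in \sH(c)$ the identity $\mu c_j=\lambda_j c_j$ with $c_j\neq 0$ forces $\mu=\lambda_j\in\F_q$; but then all $\lambda_j$'s coincide and $c'\in\F_q c$, contradicting the definition. Consequently each bad pair spans an $\F_{q^h}$-plane and is therefore contained in exactly $\binom{N-2}{K-2}_{q^h}$ many $K$-dimensional subspaces. The number of $K$-dimensional subspaces of $\F_{q^h}^N$ failing outer minimality is thus at most $\binom{N-2}{K-2}_{q^h}\cdot S$, which by assumption is strictly smaller than $\binom{N}{K}_{q^h}$, giving the desired existence.

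For the ``in particular'' clause, write $Q:=q^h$. Using the elementary inequality $Q^a-1\geq Q^{a-b}(Q^b-1)$ for $a\geq b$, I obtain the clean Gaussian-binomial ratio bound
\[
\frac{\binom{N}{K}_{Q}}{\binom{N-2}{K-2}_{Q}}=\frac{(Q^N-1)(Q^{N-1}-1)}{(Q^K-1)(Q^{K-1}-1)}\geq Q^{2(N-K)},
\]
while dropping the $-q$ factor and adjoining the (positive) $i=0$ term yields the strict upper bound $S<\sum_{i=0}^{N}\binom{N}{i}(Q-1)^i q^i=(qQ-q+1)^N$. Hence the combinatorial hypothesis of the first part is satisfied as soon as $(qQ-q+1)^N\leq Q^{2(N-K)}$; taking $\log_Q$, rearranging, substituting $Q=q^h$ and taking ceilings produces exactly the stated lower bound on $N$. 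The bookkeeping to verify is that the strict inequality $S<(qQ-q+1)^N$ survives the combination, which holds because the omitted $i=0$ term contributes $1>0$.
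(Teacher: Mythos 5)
Your proof is correct and follows essentially the same Chabanne--Cohen--Patey-style counting argument as the paper: identify bad pairs witnessing failure of outer minimality via Proposition~\ref{prop:outercodeword}, count them in the ambient space to obtain the sum $S$, multiply by the fixed multiplicity $\binom{N-2}{K-2}_{q^h}$, and compare with $\binom{N}{K}_{q^h}$; the asymptotic bound follows from the same two estimates $S<(q^{h+1}-q+1)^N$ and $\binom{N}{K}_{q^h}/\binom{N-2}{K-2}_{q^h}\geq q^{2h(N-K)}$. The one genuine improvement you make is the explicit verification that every bad pair $(c,c')$ is $\F_{q^h}$-linearly independent --- a fact the paper uses tacitly when it asserts the multiplicity $\binom{N-2}{K-2}_{q^h}$ but does not justify; your short argument closes that small gap.
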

\begin{proof}
 A nonzero codeword $c\in \mC\subseteq  \F_{q^h}^N$ is not outer minimal if there exists $c'\in \mC$ such that
    \begin{equation}\label{eq:notouter} c'\neq \lambda c, \ \forall \lambda\in \F_q \ \ \wedge \ \  \sH(c')\subseteq \sH(c) \ \ \wedge \ \ \forall i\in \sH(c), \ \exists \lambda_i\in \F_q \text{ s.t. } c'_i=\lambda_i c_i.\end{equation} 
Let us define 
\[B=\left\{(c,c')\in \F_{q^h}^N \times \F_{q^h}^N \St c\neq 0 \text{ and }\eqref{eq:notouter} \text{ holds}\right\}.\]
The cardinality of $B$ is given by
    $$|B|=\sum_{i=1}^N \binom{N}{i}(q^h-1)^i\left(q^i-q\right).$$
Since the $2$-dimensional subspace generated by each pair in $B$ is contained in exactly $\binom{N-2}{K-2}_{q^h}$ codes of dimension $K$ in $\F_{q^h}^N$ and the total number of $[N,K]_{q^h}$ codes is $\binom{N}{K}_{q^h}$, we have that if $\binom{N-2}{K-2}_{q^h}\cdot |B| < \binom{N}{K}_{q^h}$ then there exists an $[N,K]_{q^h}$ code $\mC$ which does not contain any $2$-dimensional subspace generated by an element of $B$. Therefore, $\mC$ is outer minimal.\\

To obtain the second inequality, it is enough to observe that 
\[|B|< (1+(q^h-1)q)^N\]
and
\[\frac{\binom{N}{K}_{q^h}}{\binom{N-2}{K-2}_{q^h}}=\frac{(q^{Nh}-1)(q^{Nh}-q^h)}{(q^{Kh}-1)(q^{Kh}-q^h)}>q^{2h(N-K)}.\]
\end{proof}

Due to the connection between outer minimal codes and outer strong blocking sets, we get immediately the following.

\begin{theorem}
There exist 
\[\left\lceil \frac{2}{\log_{q^h}\left(\frac{q^{2h}}{q^{h+1}-q+1}\right)}\cdot K\right\rceil\]
many $(h-1)$-dimensional subspaces of $\PG(Kh-1,q)$ whose union is a strong blocking set. In particular
\[m(Kh,q)\leq \left\lceil \frac{2}{\log_{q^h}\left(\frac{q^{2h}}{q^{h+1}-q+1}\right)}\cdot K\right\rceil \cdot m(h,q)\]
and for $k$ even
\[m(k,q)\leq \left\lceil \frac{2k}{\log_{q}\left(\frac{q^4}{q^3-q+1}\right)}\right\rceil\cdot (q+1).\]
\end{theorem}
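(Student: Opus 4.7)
The plan is to derive all three statements as direct consequences of the preceding existence theorem for outer minimal codes, combined with the correspondence of Theorem~\ref{thm:equivalence} and the subspace replacement lemma (Lemma~\ref{lemma_prop4.5}). Everything hinges on translating between an outer minimal $[N,K]_{q^h}$ code, its associated outer strong blocking set $\mP \subseteq \PG(K-1,q^h)$ of $N$ points, and the image $\mF_q(\mP) \subseteq \PG(Kh-1,q)$, which is the union of $N$ projective subspaces of dimension $h-1$ and is by definition a strong blocking set.

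First I would prove the first assertion. By the preceding existence theorem, for $N := \left\lceil \frac{2K}{\log_{q^h}\!\left(\frac{q^{2h}}{q^{h+1}-q+1}\right)}\right\rceil$ there is an outer minimal $[N,K]_{q^h}$ code $\mC$. By Theorem~\ref{thm:equivalence}, the associated projective system $\mP(\mC)$ is a $q$-outer strong blocking set of cardinality $N$ in $\PG(K-1,q^h)$, and hence its field reduction $\mF_q(\mP(\mC))$ is a strong blocking set in $\PG(Kh-1,q)$ formed as the union of $N$ subspaces of projective dimension $h-1$, which is exactly the first claim.

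Next I would obtain the inequality for $m(Kh,q)$ by the standard subspace-replacement trick. Writing $\mF_q(\mP(\mC)) = U_1 \cup \cdots \cup U_N$ with $\langle U_i\rangle \cong \PG(h-1,q)$, inside each $\langle U_i\rangle$ I can choose an isomorphic copy of a smallest strong blocking set in $\PG(h-1,q)$, which has size $m(h,q)$. Lemma~\ref{lemma_prop4.5} then guarantees that the union of these copies is itself a strong blocking set in $\PG(Kh-1,q)$, of cardinality at most $N \cdot m(h,q)$, yielding
\[
m(Kh,q) \;\leq\; \left\lceil \tfrac{2K}{\log_{q^h}\!\left(\frac{q^{2h}}{q^{h+1}-q+1}\right)}\right\rceil \cdot m(h,q).
\]

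Finally, the bound for even $k$ is just the specialization $h=2$, $K=k/2$. Here $m(2,q)=q+1$ (the whole projective line is the only strong blocking set, as any hyperplane is a point). Using $\log_{q^2}(x)=\tfrac{1}{2}\log_q(x)$, the factor $\tfrac{2K}{\log_{q^2}(\cdot)}$ becomes $\tfrac{k}{\log_{q^2}(\cdot)} = \tfrac{2k}{\log_{q}(\cdot)}$, and substituting gives precisely the stated inequality. There is no real obstacle in the argument; the only point requiring attention is the careful change of logarithm base and the verification that the replacement step preserves the strong blocking property, both of which are handled verbatim by Lemma~\ref{lemma_prop4.5} and elementary identities.
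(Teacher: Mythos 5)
Your proposal is correct and matches the paper's intended argument exactly; the paper itself gives no written proof, merely stating the result ``due to the connection between outer minimal codes and outer strong blocking sets, we get immediately the following,'' so your proposal simply makes explicit the three-step chain the authors leave implicit: existence of an outer minimal $[N,K]_{q^h}$ code from the preceding counting theorem, its reading as $N$ many $(h-1)$-spaces under field reduction via Theorem~\ref{thm:equivalence}, and the substitution of minimal strong blocking sets inside each space via Lemma~\ref{lemma_prop4.5}, with the $h=2$, $K=k/2$ specialization and the base-change $\log_{q^2} = \tfrac{1}{2}\log_q$ giving the last inequality.
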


\begin{remark}
Considering quadratic extensions of $\F_q$, we get that there exist
\[\left\lceil \frac{1}{\log_{q^2}\left(\frac{q^4}{q^3-q+1}\right)}\cdot 2K\right\rceil\]
many lines in $\PG(2K-1,q)$ whose union is a strong blocking set, or, equivalently, the length of the shortest minimal code of even dimension  $k$ over $\F_q$ satisfies
\[m(k,q)\leq \left\lceil \frac{1}{\log_{q^2}\left(\frac{q^4}{q^3-q+1}\right)}\cdot k\right\rceil\cdot (q+1).\]
For odd dimensions, it is enough to shorten the code on a coordinate to get essentially the same bound. Note that
\[\frac{1}{\log_{q^2}\left(\frac{q^4}{q^3-q+1}\right)}<\frac{2}{1+\frac{1}{(q+1)^2\ln q}}.\]
Indeed, the above inequality is equivalent to 
\[\ln \left(1+\frac{q-1}{q^3-q+1}\right)>\frac{1}{(q+1)^2},\]
which is true for any $q\geq 2$ using the fact that $\ln (1+x)>x/(x+1)$ for $x>-1$ and $x\neq 0$.\\
Hence, for $k$ large enough and $q\neq 2$, our bound is better than those proved in \cite{chabanne2013towards,heger2021short} and it is then, to the best of our knowledge, the best-known upper bound on the cardinality of the smallest strong blocking set in $\PG(k-1,q)$.
\end{remark}

\begin{remark}
Independent of our work, Bishnoi, D'haeseleer, Gijswijt and Potukuchi have obtained the same upper bound in \cite{bishnoi2023blocking}. Moreover, one of the authors of \cite{heger2021short} pointed out to us that this bound could be derived from their paper as well, using different methods.
\end{remark}

\subsection{Small (\emph{k-2})-saturating sets}

As we mentioned already, strong blocking sets have been introduced in  \cite{davydov2011linear} to construct small saturating sets. In this short subsection, we highlight the consequences of the bounds proved above on saturating sets.

\begin{definition}
A set of points $S \subseteq  \mathrm{PG}(k-1, q)$ is $\rho$-\textbf{saturating} if any point $Q \in \mathrm{PG}(k-1, q)$ belongs to the space generated by $\rho+1$ points of $S$ and $\rho$ is the smallest value with this property. We denote $s_q(k-1,\rho)$ the smallest size of a $\rho$-saturating set in $\mathrm{PG}(k-1, q)$.
\end{definition}

Clearly every strong blocking set is also $(k-2)$-saturating but this gives in general large saturating sets. A refined connection between strong blocking sets and $(k-2)$-saturating sets is the following. 

\begin{theorem}[{\cite[Theorem 3.2]{davydov2011linear}}]
Any strong blocking set in a subgeometry $\mathrm{PG}(k-1,q)$ of  $\mathrm{PG}(k-1,q^{k-1})$ is a $(k-2)$-saturating set in $\mathrm{PG}(k-1,q^{k-1})$.
\end{theorem}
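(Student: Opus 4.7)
The plan is to take an arbitrary point $Q \in \mathrm{PG}(k-1,q^{k-1})$ and exhibit explicitly $k-1$ points of the strong blocking set $\mathcal{B} \subseteq \mathrm{PG}(k-1,q)$ whose span contains $Q$; this will settle the $(k-2)$-saturating property.

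First, I would lift $Q$ to a representative $v \in \F_{q^{k-1}}^k$ and use the fact that $[\F_{q^{k-1}} : \F_q] = k-1$. Picking any $\F_q$-basis $\{\alpha_1, \ldots, \alpha_{k-1}\}$ of $\F_{q^{k-1}}$, I can expand $v$ coordinate-wise to write
\[
v \;=\; \sum_{i=1}^{k-1} \alpha_i\, u_i, \qquad u_i \in \F_q^k.
\]
The crucial dimension observation is that the $\F_q$-subspace $W := \langle u_1,\ldots,u_{k-1}\rangle_{\F_q} \subseteq \F_q^k$ has $\F_q$-dimension at most $k-1$, so it is contained in some $\F_q$-hyperplane $H \subseteq \F_q^k$. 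By construction, $v$ lies in the scalar extension $\overline{H} := H \otimes_{\F_q} \F_{q^{k-1}}$, which is an $\F_{q^{k-1}}$-hyperplane of $\F_{q^{k-1}}^k$.

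Now I invoke the strong blocking property of $\mathcal{B}$ in the subgeometry $\mathrm{PG}(k-1,q)$: the intersection of $\mathcal{B}$ with the projective hyperplane corresponding to $H$ spans that hyperplane over $\F_q$. Hence I can choose $k-1$ points $B_1, \ldots, B_{k-1} \in \mathcal{B}$ whose lifts form an $\F_q$-basis of $H$. Since the $B_i$ belong to the subgeometry $\mathrm{PG}(k-1,q)$, $\F_q$-linear independence of their lifts upgrades to $\F_{q^{k-1}}$-linear independence in $\F_{q^{k-1}}^k$, so
\[
\langle B_1, \ldots, B_{k-1}\rangle_{\F_{q^{k-1}}} \;=\; \overline{H} \;\ni\; v,
\]
which means $Q$ lies in the projective span of $B_1, \ldots, B_{k-1}$ inside $\mathrm{PG}(k-1,q^{k-1})$. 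Since $Q$ was arbitrary, $\mathcal{B}$ is a $(k-2)$-saturating set in $\mathrm{PG}(k-1,q^{k-1})$.

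The only step that needs a little care is the passage from $\F_q$-independence to $\F_{q^{k-1}}$-independence, but this is standard for subgeometries: a matrix with entries in $\F_q$ has the same rank computed over $\F_q$ as over any extension. The rest is a book-keeping exercise matching the degree $k-1$ of the extension with the $k-1$ coordinates needed to express $v$ in terms of an $\F_q$-basis, which is precisely what makes this specific extension work.
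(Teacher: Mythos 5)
The paper does not prove this result; it cites it directly from Davydov, Giulietti, Marcugini and Pambianco (2011), so there is no in-text proof to compare against. Your argument is correct and is, in essence, the standard proof from the cited source: decompose a representative of an arbitrary $Q\in\PG(k-1,q^{k-1})$ along an $\F_q$-basis of $\F_{q^{k-1}}$ to obtain $k-1$ vectors $u_1,\dots,u_{k-1}\in\F_q^k$, observe they lie in some $\F_q$-hyperplane $H$ (this is exactly where the degree $k-1$ of the extension is used), apply the strong blocking property to find $k-1$ points of $\mathcal{B}$ whose representatives form an $\F_q$-basis of $H$, and note that $\F_q$-rank equals $\F_{q^{k-1}}$-rank for matrices over $\F_q$, so these same $k-1$ points span the extended hyperplane $\overline{H}$ over $\F_{q^{k-1}}$, which contains $Q$. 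The only minor point you gloss over is the "smallest $\rho$" clause in the paper's definition of $\rho$-saturating; strictly one should also observe that $\mathcal{B}$ cannot be $(k-3)$-saturating (a quick counting argument shows that the union of $(k-3)$-flats spanned by points of the subgeometry cannot cover all of $\PG(k-1,q^{k-1})$), but the theorem as usually quoted and used in the literature only concerns the covering direction that you proved.
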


Recent improvements on the upper bound on the minimum size of a $\rho$-saturating set have been obtained in \cite{Davydov_2019,denaux2021constructing}. However, for 
$\rho=k-2$, the best-known results are given by \cite[Corollaries 6.4 and 6.7]{heger2021short}. The following is an improvement on the previous results for $k$ large and $q\neq 2$.

\begin{corollary} Let $j=0$ if $k$ is even and $j=1$ otherwise. Then
    \[s_{q^{k-1}}(k-1,k-2)\leq \left\lceil \frac{1}{\log_{q^2}\left(\frac{q^4}{q^3-q+1}\right)}\cdot (k+j)\right\rceil\cdot (q+1).\]
\end{corollary}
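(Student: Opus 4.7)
The strategy is to combine the Davydov embedding theorem stated just above with the upper bound on $m(k,q)$ proved in the previous subsection. Indeed, that embedding theorem immediately yields
\[s_{q^{k-1}}(k-1,k-2)\le m(k,q),\]
so it suffices to bound $m(k,q)$ for both parities of $k$ and plug in.

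When $k$ is even the claim is exactly the bound on $m(k,q)$ obtained at the end of the previous remark, which gives the statement with $j=0$ directly.

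When $k$ is odd ($j=1$) the plan is to reduce to the even case by showing $m(k,q)\le m(k+1,q)$ and then applying the even-$k$ bound with $k+1$ in place of $k$. Concretely, I would take a smallest strong blocking set $B\subseteq \PG(k,q)$, choose any point $P\notin B$ (which exists because a minimum strong blocking set cannot equal the whole of $\PG(k,q)$ for $k\ge 2$), and let $\pi$ be the projection from $P$ onto a hyperplane $\Pi\cong \PG(k-1,q)$ not through $P$. For any hyperplane $H$ of $\Pi$, the join $H':=\langle H,P\rangle$ is a hyperplane of $\PG(k,q)$, and since $B\cap H'$ spans $H'$ while $P\in H'$, the image $\pi(B\cap H')$ spans $H$. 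Hence $\pi(B)$ is a strong blocking set in $\Pi$ of cardinality at most $|B|=m(k+1,q)$, establishing $m(k,q)\le m(k+1,q)$.

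The only step that requires a moment of care is this projection argument (the geometric counterpart of shortening a minimal code on one coordinate), but it is essentially routine; there is no substantive obstacle. Combining the two parity cases completes the proof.
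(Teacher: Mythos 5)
Your proof is correct and follows the same strategy the paper leaves implicit: the Davydov embedding theorem gives $s_{q^{k-1}}(k-1,k-2)\le m(k,q)$, the even case is exactly the bound on $m(k,q)$ from the preceding remark, and the odd case is reduced to the even one by dropping a dimension. The only (immaterial) difference is that you project a smallest strong blocking set $B$ from a point $P\notin B$, whereas the paper's phrase about shortening the code on a coordinate corresponds to projecting from a point of $B$ and removing it; both yield $m(k,q)\le m(k+1,q)$.
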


\section{Characterization and Construction  of Outer Strong Blocking Sets}\label{sec:constructions}

This section is dedicated to constructing outer minimal codes by iterative concatenations. We do it geometrically, constructing outer strong blocking sets. At each step we start with a strong blocking set over an extension field, and we select a special subset that will result in an outer strong blocking set. The main idea is that at each step, we select a strong blocking set given as union of projective subspaces. Note that we do not require it to satisfy the avoidance property. In order to do so, we first recall two results, one from~\cite{alfarano2022three} and one from \cite{bartoli2022cutting}, and then generalize the latter.

\medskip

The first idea is the following. Given a union of subsets which forms a strong blocking set, one can consider for each subset its projective span, and take inside it a strong blocking set. The union will be a strong blocking set, as shown in  Lemma \ref{lemma_prop4.5}. 

\medskip

The second idea is given in the following result, which provides a general construction of outer strong blocking sets starting from a strong blocking set as union of lines. This can be straightforwardly deduced from  \cite{alfarano2022three}; see also \cite[Theorem 3.7]{davydov2011linear} for the special case $h=2$ and $K=2$.

\begin{theorem}[{\cite[Theorem 4.2]{alfarano2022three}}]\label{thm:4points}
 Let $\mL=\{\ell_1,\ldots,\ell_r\}$ be a set of lines  of~$\PG(K-1,q^h)$ whose union is a strong blocking set. For each $i\in [r]$, let $\alpha_{1}^{(i)},\alpha_{2}^{(i)},\alpha_{3}^{(i)},\alpha_{4}^{(i)}$ be four points of $\ell_i$ not lying on a proper subline. Then, 
 $$\bigcup_{i=1}^r \{\alpha_{1}^{(i)},\alpha_{2}^{(i)},\alpha_{3}^{(i)},\alpha_{4}^{(i)}\}$$
 is an outer strong blocking set.
\end{theorem}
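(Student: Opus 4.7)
The plan is to localize the problem to a single line and then verify outer-minimality of the associated $[4,2]_{q^h}$ code by a direct case analysis. For the localization, observe that $\bigcup_i \ell_i$ is a strong blocking set of $\PG(K-1,q^h)$, hence also outer strong blocking (by the proposition stating that strong blocking sets are outer strong blocking sets), so $\bigcup_i \mF_q(\ell_i)$ is a strong blocking set of $\PG(Kh-1,q)$. Each $\mF_q(\ell_i)$ fills a $(2h-1)$-dimensional projective subspace of $\PG(Kh-1,q)$, inside which lies $\mF_q(S_i)$, where $S_i := \{\alpha_1^{(i)},\ldots,\alpha_4^{(i)}\}$. By Lemma \ref{lemma_prop4.5}, once one knows that each $\mF_q(S_i)$ is a strong blocking set of $\mF_q(\ell_i)$, the union $\bigcup_i \mF_q(S_i) = \mF_q\bigl(\bigcup_i S_i\bigr)$ is a strong blocking set of $\PG(Kh-1,q)$, which is the desired conclusion. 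Equivalently, by Theorem \ref{thm:equivalence}, it suffices to prove that each $S_i$ is an outer strong blocking set of $\ell_i\cong\PG(1,q^h)$, or equivalently that the $[4,2]_{q^h}$ code $\mC$ associated to $S_i$ is outer minimal.

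Since $\PGL(2,q^h)$ acts $3$-transitively on $\ell_i$, and a projective transformation corresponds to an $\F_{q^h}$-linear (hence $\F_q$-linear) change of coordinates that preserves the outer strong-blocking-set property, we may assume $\mC$ has generator matrix $\left(\begin{smallmatrix} 1 & 0 & 1 & 1 \\ 0 & 1 & 1 & c \end{smallmatrix}\right)$. The hypothesis that the four points do not lie on a proper subline of $\ell_i$ translates into $c\notin \F_q$. By Proposition \ref{prop:outercodeword} it suffices to verify that every nonzero codeword of $\mC$, written as $\mathbf{c} = (\alpha,\beta,\alpha+\beta,\alpha+c\beta)$ for some $(\alpha,\beta)\in\F_{q^h}^2\setminus\{(0,0)\}$, is outer minimal. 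A short check using $c\notin\{0,1\}$ rules out nonzero codewords of weight less than $3$, so only weights $3$ and $4$ remain.

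Suppose $\mathbf{c}' = (\alpha',\beta',\alpha'+\beta',\alpha'+c\beta')$ is a codeword with $\sH(\mathbf{c}')\subseteq\sH(\mathbf{c})$ and $\mathbf{c}'_i = \lambda_i \mathbf{c}_i$ for some $\lambda_i\in\F_q$ at every $i\in\sH(\mathbf{c})$; the goal is to deduce $\mathbf{c}'=\lambda \mathbf{c}$ for a single $\lambda\in\F_q$. When $\wt(\mathbf{c})=3$, a direct inspection of each of the four sub-cases (according to which coordinate vanishes) immediately forces the defined $\lambda_i$ to coincide, giving $\mathbf{c}' = \lambda \mathbf{c}$. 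When $\wt(\mathbf{c})=4$, combining the four proportionality relations yields the two $\F_{q^h}$-equations
\[
(\lambda_1-\lambda_3)\alpha + (\lambda_2-\lambda_3)\beta = 0, \qquad
(\lambda_1-\lambda_4)\alpha + c(\lambda_2-\lambda_4)\beta = 0.
\]
If $\alpha,\beta$ are $\F_q$-linearly independent in $\F_{q^h}$, each equation immediately gives two equalities, yielding $\lambda_1=\lambda_2=\lambda_3$ and (using $\alpha+c\beta\neq 0$) $\lambda_1=\lambda_4$. If instead $\beta=\mu\alpha$ for some $\mu\in\F_q^\ast$, the second equation becomes $(\lambda_1-\lambda_4) + \mu c(\lambda_2-\lambda_4) = 0$, and since $\mu c\notin\F_q$ the two $\F_q$-coefficients must vanish, forcing $\lambda_1=\lambda_2=\lambda_4$; the first equation, combined with $\alpha+\beta\neq 0$, then gives $\lambda_3=\lambda_4$.

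The main obstacle is the weight-$4$ analysis, where the sub-case of $\F_q$-dependent $\alpha,\beta$ requires the crucial use of $c\notin\F_q$ (equivalently, $\mu c\notin\F_q$ for every $\mu\in\F_q^\ast$) to force all the $\F_q$-scalars to coincide.
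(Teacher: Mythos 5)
The paper does not actually prove this statement: it is imported verbatim from \cite{alfarano2022three} (with a pointer to \cite{davydov2011linear} for the case $h=2$, $K=2$), so there is no in-text argument to compare against, and I review your proposal on its own terms.

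Your proof is correct. The localization step is sound: since $\bigcup_i\ell_i$ is a strong blocking set in $\PG(K-1,q^h)$, it is in particular an outer strong blocking set, so $\bigcup_i\mF_q(\ell_i)$ is a strong blocking set in $\PG(Kh-1,q)$ whose $i$-th piece spans the full $(2h-1)$-space $\mF_q(\ell_i)$; Lemma~\ref{lemma_prop4.5} then reduces the claim to showing that $\mF_q(S_i)$ blocks strongly inside $\mF_q(\ell_i)$, which by Theorem~\ref{thm:equivalence} is exactly the assertion that each $S_i$ is an outer strong blocking set in $\PG(1,q^h)$. The passage to the normalized generator matrix via $\PGL(2,q^h)$ is legitimate (an $\F_{q^h}$-linear change of frame induces an $\F_q$-linear one downstairs), and the codeword-by-codeword check via Proposition~\ref{prop:outercodeword} is complete: the weight-$3$ cases are immediate, and in the weight-$4$ case the dichotomy ``$\alpha,\beta$ $\F_q$-independent'' versus ``$\beta=\mu\alpha$ with $\mu\in\F_q^\ast$'' is exhaustive and each branch correctly forces all four $\lambda_i$ to coincide.

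Two small points of presentation. First, in the $\F_q$-independent sub-case the phrase ``each equation immediately gives two equalities'' overstates things: only the first relation has both coefficients in $\F_q$, since the second has coefficient $c(\lambda_2-\lambda_4)$ on $\beta$, which need not lie in $\F_q$. Your parenthetical fixes this, but it would be cleaner to say that the first relation yields $\lambda_1=\lambda_2=\lambda_3$, after which the second factors as $(\lambda_1-\lambda_4)(\alpha+c\beta)=0$. Second, the translation of the hypothesis into $c\notin\F_q$ tacitly uses the standard fact that three distinct points of $\PG(1,q^h)$ lie on a unique $\F_q$-subline; it is worth stating, and it shows that your argument proves the conclusion already under the weakest reasonable reading of ``proper subline'' (the $\F_q$-subline), so any stronger reading of the hypothesis in \cite{alfarano2022three} is subsumed.
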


When $h=2$, an example of outer blocking set of $\PG(2,q^2)$ was found in \cite{bartoli2022cutting}. Here we give a more general statement, that combines the result in \cite[Theorem 3.15]{bartoli2022cutting} with Lemma~\ref{lemma_prop4.5}.

\begin{theorem}[{\cite[Theorem 3.15]{bartoli2022cutting}}]\label{thm:cutting_sevenlines}
 Let $\{\Pi_1,\ldots,\Pi_r\}$ be a set of planes in $\PG(K-1,q^2)$ whose union is a strong blocking set. Then there exist seven points $\{\beta_1^{(i)},\ldots,\beta_7^{(i)}\}$ on each plane $\Pi_i$ such that 
 $$\bigcup_{i=1}^r \{\beta_1^{(i)},\ldots,\beta_7^{(i)}\}$$
 is an outer strong blocking set.
\end{theorem}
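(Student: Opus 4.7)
The plan is to apply the single-plane case of \cite[Theorem 3.15]{bartoli2022cutting} to each $\pi_i$ individually and then glue by Lemma \ref{lemma_prop4.5}, but in the \emph{field-reduced} ambient space $\PG(2K-1,q)$. The single-plane statement gives, for each $i$, seven points $\beta_1^{(i)},\ldots,\beta_7^{(i)}\in\pi_i\cong \PG(2,q^2)$ whose field reduction is a strong blocking set of the $\PG(5,q)$ spanned by $\mF_q(\pi_i)$. I would also record at the outset that $\mF_q(\pi_i)$ actually equals, as a pointset, the full $\PG(5,q)$ it spans: each point of $\pi_i$ reduces to a line of $\PG(5,q)$, and the $q^4+q^2+1$ resulting lines form a Desarguesian spread covering all $q^5+\cdots+q+1$ points. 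Hence $\langle \mF_q(\pi_i)\rangle=\mF_q(\pi_i)$ as projective subspaces.

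The step I expect to be the main obstacle is showing that $\bigcup_i \mF_q(\pi_i)$ is itself a strong blocking set of $\PG(2K-1,q)$, equivalently that $\bigcup_i\pi_i$ is already an outer strong blocking set. Given a hyperplane $H$ of $\PG(2K-1,q)$, I take $H'$ to be the unique maximal $\F_{q^2}$-subspace of $H$; since $\dim_{\F_q}H=2K-1$ is odd, $H'$ has $\F_{q^2}$-dimension $K-1$, so it is a hyperplane of $\PG(K-1,q^2)$ with $\F_q$-codimension $1$ in $H$. The strong blocking property of $\bigcup_i\pi_i$ forces $H'\cap\bigcup_i \pi_i$ to span $H'$ over $\F_{q^2}$; field reducing, $H'\cap\bigcup_i\mF_q(\pi_i)$ spans $H'$ over $\F_q$. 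That covers all but one $\F_q$-dimension of $H$. To fill the last one I pick any $\pi_j\not\subseteq H'$ (such exists, otherwise $\bigcup_i\pi_i\subseteq H'$ would contradict the hypothesis). Then $\mF_q(\pi_j)\cap H$ has $\F_q$-dimension $5$ while $\mF_q(\pi_j)\cap H'=\mF_q(\pi_j\cap H')$ has $\F_q$-dimension at most $4$, producing a point of $H\cap\mF_q(\pi_j)$ outside $H'$; adjoined to the previous span, it generates all of $H$.

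With this in hand, Lemma \ref{lemma_prop4.5} applies in $\PG(2K-1,q)$ with $U_i=\mF_q(\pi_i)=\langle U_i\rangle$ and $B_i=\mF_q(\{\beta_1^{(i)},\ldots,\beta_7^{(i)}\})$, the latter being a strong blocking set of $\langle U_i\rangle\cong \PG(5,q)$ by the single-plane case. The lemma then yields that $\bigcup_i B_i=\mF_q\bigl(\bigcup_i\{\beta_1^{(i)},\ldots,\beta_7^{(i)}\}\bigr)$ is a strong blocking set of $\PG(2K-1,q)$, which is precisely the definition of $\bigcup_i\{\beta_1^{(i)},\ldots,\beta_7^{(i)}\}$ being an outer strong blocking set in $\PG(K-1,q^2)$. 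The only real content beyond routine bookkeeping lives in the middle paragraph; once the ``missing $\F_q$-dimension'' is recovered, everything else is a direct invocation of the two cited tools.
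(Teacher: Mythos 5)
Your argument is correct and follows exactly the route the paper intends: apply the single-plane case of \cite[Theorem 3.15]{bartoli2022cutting} on each $\pi_i$, observe that $\bigcup_i\mF_q(\pi_i)$ is a strong blocking set of $\PG(2K-1,q)$, and finish with Lemma~\ref{lemma_prop4.5}. The middle step that you work out by hand (maximal $\F_{q^2}$-subspace $H'\subseteq H$, then a point of $\mF_q(\pi_j)\cap H$ outside $H'$) is exactly the content of the paper's earlier proposition that a strong blocking set in $\PG(K-1,q^h)$ is automatically an outer strong blocking set, applied to $\mP=\bigcup_i\pi_i$; you have just made its terse proof explicit in the $h=2$ case.
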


\subsection{The Avoidance Property: Linear Sets and Hermitian Varieties} 
In this subsection, we generalize the results of Theorem \ref{thm:cutting_sevenlines} (\cite[Theorem 3.15]{bartoli2022cutting}) for general sets of subspaces and over general degree $h$ extensions. We are not able to provide an existence result, but we give an equivalent characterization on a set of points satisfying the avoidance property, involving  $q$-systems, linear sets and (degenerate) Hermitian varieties.

\begin{definition}
An $[n,K]_{q^h/q}$\textbf{system} $V$ is an $n$-dimensional $\Fq$-subspace of $\F_{q^h}^K$ such that $\langle V \rangle_{\F_{q^h}}=\F_{q^h}^K$. When we do not care about the parameters, we simply refer to $V$ as a $q$-\textbf{system}.
\end{definition}

From a geometric point of view, $q$-systems define linear sets. These are well-studied objects in finite geometry, which generalize the concept of subgeometry. Their name was introduced by Lunardon in \cite{lunardon1999normal}, where linear sets are used for special constructions of
blocking sets. The interested reader is referred to \cite{polverino2010linear} for a survey on linear sets.

\begin{definition}
Let $V$ be an $[n,K]_{q^h/q}$ system. The \textbf{linear set} associated to $V$ is a pair $(L_{V},\wt_{V})$, with
$$L_{V}=\{\langle v\rangle_{\F_{q^h}} \St v \in V\}\subseteq \PG(K-1,q^h),$$
and $\wt_{V}$ -- called the \textbf{weight function} -- defined as
$$ \begin{array}{rccl }\wt_{V}:& \PG(K-1,q^h) & \longrightarrow & \mathbb N \\
& P=\langle u\rangle_{\F_{q^h}} & \longmapsto & \dim_{\Fq}(V\cap \langle u\rangle_{\F_{q^h}}).
\end{array}$$
The \textbf{rank} of $(L_{V},\wt_{V})$ is $\dim_{\Fq}(V)=n$.
\end{definition}

With these notions in mind, we can give a new characterization of outer strong blocking sets, in terms of linear sets. The following is a generalization of \cite[Lemma 3.8]{bartoli2022cutting}. This will be clearer by reading the result afterwards.

\begin{theorem}\label{thm:characterization_outer_strong} 
 Let $\mP=\{P_1,\ldots,P_N\}\subseteq \PG(K-1,q^h)$ be a set of points. The following are equivalent.
 \begin{enumerate}
      \item[{\rm (a)}] $\{\mF_q(P_1),\ldots,\mF_q(P_N)\}$ has the avoidance property.
      \item[{\rm (b)}] $\{P_1,\ldots,P_N\}$ is not contained in any hyperplane, and there is no linear set $(L_V,\wt_V)$ of rank $Kh-2$ in $\PG(K-1,q^h)$ such that $\wt_V(P_i)=\dim_{\Fq}(V\cap P_i)\geq h-1$ for each $i\in[N]$.  
 \end{enumerate} 
\end{theorem}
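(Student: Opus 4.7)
The plan is to translate the avoidance property --- stated on the $\F_q$-projective side $\PG(Kh-1,q)$ --- into the language of $\F_q$-subspaces of $\F_{q^h}^K$, and then to match it against the condition involving linear sets. Under the field-reduction correspondence, codimension-$2$ projective subspaces $\Lambda\subseteq \PG(Kh-1,q)$ are in bijection with $\F_q$-subspaces $V\subseteq \F_{q^h}^K$ of $\F_q$-dimension $Kh-2$. For $P_i=\langle v_i\rangle_{\F_{q^h}}$ one has
\[
\dim\bigl(\Lambda\cap \mF_q(P_i)\bigr)=\dim_{\F_q}\bigl(V\cap \langle v_i\rangle_{\F_{q^h}}\bigr)-1,
\]
so the avoidance clause $\dim(\Lambda\cap \mF_q(P_i))\geq h-2$ reads exactly as $\wt_V(P_i)\geq h-1$. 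Once this dictionary is set up, the whole proof reduces to deciding when such a witness $V$ is a genuine $q$-system, i.e.\ satisfies $\langle V\rangle_{\F_{q^h}}=\F_{q^h}^K$.

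For (a)$\Rightarrow$(b), I would argue the contrapositive of each clause of (b) separately. If the $P_i$'s all lie in an $\F_{q^h}$-hyperplane $H$, the underlying $\F_q$-subspace $\widetilde H\subseteq \F_{q^h}^K$ has $\F_q$-codimension $h\geq 2$, so one can enlarge it to some $V$ of $\F_q$-dimension $Kh-2$; the corresponding $\Lambda$ then contains every $\mF_q(P_i)$ entirely, hence $\dim(\Lambda\cap \mF_q(P_i))=h-1\geq h-2$, violating avoidance. Similarly, if a rank-$(Kh-2)$ linear set $(L_V,\wt_V)$ satisfies $\wt_V(P_i)\geq h-1$ for every $i$, the associated $\Lambda$ directly violates avoidance via the dictionary above.

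For (b)$\Rightarrow$(a), I would again proceed by contraposition. Assume avoidance fails, witnessed by an $\F_q$-subspace $V\subseteq\F_{q^h}^K$ of $\F_q$-dimension $Kh-2$ with $\dim_{\F_q}(V\cap\langle v_i\rangle_{\F_{q^h}})\geq h-1$ for every $i$. If $\langle V\rangle_{\F_{q^h}}=\F_{q^h}^K$, then $V$ is a rank-$(Kh-2)$ $q$-system violating the linear-set clause of (b), and we are done. The main obstacle is the remaining case $\langle V\rangle_{\F_{q^h}}\subsetneq \F_{q^h}^K$, in which the witness does not itself define a rank-$(Kh-2)$ linear set. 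Here I would use the dimension count $\dim_{\F_{q^h}}\langle V\rangle_{\F_{q^h}}\geq \lceil(Kh-2)/h\rceil$, which forces $h=2$ and $V=\widetilde H$ for an $\F_{q^h}$-hyperplane $H$; being $\F_{q^h}$-stable, each $V\cap\langle v_i\rangle_{\F_{q^h}}$ is then an $\F_{q^h}$-subspace and so has $\F_q$-dimension in $\{0,h\}$. The hypothesis combined with $h=2$ forces it to equal $h$, and therefore every $P_i$ lies in $H$, contradicting the first clause of (b). This closes the equivalence.
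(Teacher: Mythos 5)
Your proof is correct and follows essentially the same route as the paper's. Both arguments translate the avoidance property through the field-reduction dictionary into a statement about $\F_q$-subspaces $V\subseteq\F_{q^h}^K$ of $\F_q$-dimension $Kh-2$ meeting each $\langle v_i\rangle_{\F_{q^h}}$ in dimension at least $h-1$, and both hinge on the same classification: such a $V$ is either a $q$-system (giving the linear-set clause of (b)) or, when $h=2$, possibly the underlying $\F_q$-space of an $\F_{q^2}$-hyperplane (giving the hyperplane clause of (b)); for $h>2$ the dimension count $\dim_{\F_q} V=Kh-2>(K-1)h$ rules out the hyperplane case entirely. The paper states this dichotomy in one sentence and leaves the two directions implicit, whereas you unfold each implication by contraposition and verify the dimension bookkeeping step by step; this is the same argument presented at a finer level of detail. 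One remark worth keeping in mind: when $h>2$, the hyperplane condition in (b) is in fact implied by the linear-set condition (as your (a)$\Rightarrow$(b) argument already suggests, since the $V$ enlarging $\widetilde H$ is itself a $q$-system), so the two clauses are only genuinely independent when $h=2$.
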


\begin{proof}
$\{\mF_q(P_1),\ldots,\mF_q(P_N)\}$ has the avoidance property if and only if each codimension~$2$ subspace in $\PG(Kh-1,q)$ does not meet all the $(h-1)$-dimensional spaces $\mF_q(P_i)$ in at least an $(h-2)$ space. By first translating this property in $\PG(K-1,q^h)$ and then going to the vectorial setting $\F_{q^h}^K$, this means that there is no codimension $2$ $\Fq$-subspace $V$, meeting  $\langle v_i\rangle_{\F_{q^h}}$ in at least an $(h-1)$-dimensional $\Fq$-subspace. We conclude by observing that, for $h=2$, a codimension $2$ $\Fq$-subspace of $\F_{q^2}^K$ is either an $\F_{q^2}$-hyperplane or a $[2K-2,K]_{q^2/q}$ system, while, for $h>2$, a codimension $2$  $\Fq$-subspace of $\F_{q^2}^K$ can only be a $[2K-2,K]_{q^2/q}$ system. 
\end{proof}

\begin{remark}
Observe that a generalization of \cite[Theorem 3.15]{bartoli2022cutting} has been pointed out by Denaux in \cite[Theorem 16]{denaux2022higgledy}. His result states that if a set of points $\{P_1,\ldots,P_N\}\subseteq \PG(K-1,q^h)$ is not contained in any $\Fq$-linear set, then its field reduction $\mF_q(P_1)\cup\ldots\cup\mF_q(P_N)$ is a \textbf{strong} $(h-1)$-\textbf{blocking set}, that is a set of points whose intersection with every codimension $(h-1)$ space generates the space itself.
 This result is however different from Theorem \ref{thm:characterization_outer_strong}, even though they coincide for $h=2$. We will now study this case in more detail.
\end{remark}

For the special case when $h=2$, we can reformulate Theorem \ref{thm:characterization_outer_strong}, and obtain also a connection with degenerate Hermitian varieties.

We recall that an \textbf{Hermitian variety} in $\PG(K-1,q^2)$ is 
a set of the form
$$\mV_{H}:=\{x \in \PG(K-1,q^2) \St x H \sigma(x)^\top=0\},$$
where $\sigma$ is the (component-wise) $q$-Frobenius automorphism of $\F_{q^2}$, such that $\sigma(\alpha)=\alpha^q$ for every $\alpha \in \F_{q^2}$, and $H\in \F_{q^2}^{K\times K}$ is an Hermitian matrix, that is, $H=\sigma(H)^\top$. The \textbf{rank} of $\mV_{H}$ is the rank of the matrix $H$. If the rank of $H$ is smaller than $K$, the Hermitian variety is said to be \textbf{degenerate}.

Note that, a rank-$1$ degenerate Hermitian variety is just a hyperplane of $\PG(K-1,q^2)$, while a rank $2$ degenerate Hermitian variety is a cone with as vertex a $(K-3)$ dimensional subspace and base a Baer subline. 
In other words, it is $\PGL(K,q^2)$-equivalent to the rank $(2K-2)$ linear set $(L_V,\wt_V)$, with 
$$ V=\{(\alpha_1,\ldots,\alpha_K)\St \alpha_i \in \F_{q^2} \mbox{ for each } i \in [K-2], \, \alpha_{K-1}, \alpha_K\in \Fq\}.$$

\begin{theorem}\label{thm:TFAE}
 Let $\mP=\{P_1,\ldots,P_N\}\subseteq \PG(K-1,q^2)$ be a set of points not all lying on the same hyperplane. The following are equivalent.
 \begin{enumerate}
      \item[{\rm (a)}] $\{\mF_q(P_1),\ldots,\mF_q(P_N)\}$ has the avoidance property.
      \item[{\rm (b)}]  There is no linear set $(L_V,\wt_V)$ of rank $2K-2$ in $\PG(K-1,q^2)$ such that $P_i \in L_V$ for each $i\in [N]$. 
      \item[{\rm (c)}] $P_1,\ldots,P_N$ are not  contained in the same rank $2$ degenerate Hermitian variety in $\PG(K-1,q^2)$.
 \end{enumerate} 
\end{theorem}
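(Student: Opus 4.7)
My plan is to establish the two equivalences separately. First, $(\mathrm{a}) \Leftrightarrow (\mathrm{b})$ will follow by directly specializing Theorem~\ref{thm:characterization_outer_strong} to $h=2$: the condition $\wt_V(P_i) \geq h-1$ becomes $\wt_V(P_i) \geq 1$, i.e.\ $P_i \in L_V$, and the rank is $Kh-2 = 2K-2$. Theorem~\ref{thm:characterization_outer_strong} in this case states that (a) is equivalent to the $P_i$ not lying in a common hyperplane together with (b). Since the standing hypothesis of Theorem~\ref{thm:TFAE} is precisely that the $P_i$ do not all lie on a hyperplane, (a) reduces exactly to (b).

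For $(\mathrm{b}) \Leftrightarrow (\mathrm{c})$, the strategy is to classify all linear sets of rank $2K-2$ in $\PG(K-1,q^2)$ up to $\PGL(K,q^2)$-equivalence. Let $V \subseteq \F_{q^2}^K$ be any $\Fq$-subspace of $\Fq$-dimension $2K-2$. I would distinguish two cases according to whether $V$ is $\F_{q^2}$-stable. If $V$ is $\F_{q^2}$-stable, then $V$ is an $\F_{q^2}$-hyperplane and $L_V$ is a projective hyperplane of $\PG(K-1,q^2)$; a situation automatically ruled out by the standing hypothesis. Otherwise, I would pass to the annihilator $V^\perp$ with respect to the nondegenerate $\Fq$-bilinear form $(x,y) \mapsto \mathrm{Tr}_{\F_{q^2}/\Fq}(\sum_i x_i y_i)$. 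Since this form is self-adjoint under multiplication by $\F_{q^2}$, the subspace $V^\perp$ is $\F_{q^2}$-stable if and only if $V$ is, so $V^\perp$ has $\Fq$-dimension $2$ and is itself not $\F_{q^2}$-stable.

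The key observation is that any two $\Fq$-generators $u_1, u_2$ of such a $V^\perp$ must be $\F_{q^2}$-linearly independent: indeed, if $u_2 = \alpha u_1$ with $\alpha \in \F_{q^2}\setminus\Fq$, then $\{1,\alpha\}$ is an $\Fq$-basis of $\F_{q^2}$ and $V^\perp = (\Fq + \Fq\alpha)u_1 = \F_{q^2}u_1$ would be $\F_{q^2}$-stable, contradiction. Hence an element of $\GL(K,q^2)$ sends $(u_1, u_2)$ to $(e_{K-1}, e_K)$, showing that all non-$\F_{q^2}$-stable $V$ of the required dimension form a single $\GL(K,q^2)$-orbit. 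In particular $L_V$ is $\PGL(K,q^2)$-equivalent to the canonical rank $2K-2$ linear set explicitly described in the paragraph preceding the theorem, which is itself $\PGL(K,q^2)$-equivalent to a rank-$2$ degenerate Hermitian variety.

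Combining the two cases, every linear set of rank $2K-2$ in $\PG(K-1,q^2)$ is either a hyperplane (excluded by hypothesis) or the pointset of a rank-$2$ degenerate Hermitian variety; this yields $(\mathrm{b}) \Leftrightarrow (\mathrm{c})$. The main obstacle is precisely the classification in the second case, i.e.\ proving that the non-$\F_{q^2}$-stable $\Fq$-subspaces of $\F_{q^2}^K$ of $\Fq$-codimension $2$ form a single $\GL(K,q^2)$-orbit. I expect this to reduce, as sketched above, to a short linear-algebra argument through the annihilator, combined with the canonical-form description already supplied by the authors just before the theorem.
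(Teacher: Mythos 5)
Your argument follows the paper's plan almost exactly. For $(\mathrm{a})\Leftrightarrow(\mathrm{b})$ both you and the paper simply specialize Theorem~\ref{thm:characterization_outer_strong} to $h=2$ and observe that the standing non-hyperplane hypothesis absorbs the first clause of condition (b) there; this is correct.

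For $(\mathrm{b})\Leftrightarrow(\mathrm{c})$ the underlying task is the same in both proofs: show that every $[2K-2,K]_{q^2/q}$ system $V$ lies in a single $\GL(K,q^2)$-orbit, which (together with the dichotomy that a codimension-$2$ $\Fq$-subspace of $\F_{q^2}^K$ is either an $\F_{q^2}$-hyperplane or a $q$-system) identifies rank-$(2K-2)$ linear sets with rank-$2$ degenerate Hermitian varieties. The paper does this \emph{directly} on $V$: an ``easy dimension argument'' puts $V$ into the canonical form $\{\alpha\St \alpha_i\in\F_{q^2}\text{ for }i\le K-2,\,(\alpha_{K-1},\alpha_K)\in T\}$, and then normalizes the $[2,2]_{q^2/q}$-system $T$. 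You instead \emph{dualize}: pass to the $\Fq$-dimension-$2$ annihilator $V^\perp$ under the trace form, use that multiplication by $\F_{q^2}$ is self-adjoint for this form to transfer the non-$\F_{q^2}$-stability of $V$ to $V^\perp$, and observe that the $\Fq$-generators of a non-$\F_{q^2}$-stable $2$-space are necessarily $\F_{q^2}$-independent, so $\GL(K,q^2)$ acts transitively. Both are short linear-algebra arguments arriving at the same classification; yours has the merit of being spelled out where the paper only alludes to its dimension count. The one step you leave tacit is the $\GL$-equivariance of the annihilator construction — $(gV)^\perp=(g^\top)^{-1}V^\perp$ with $g^\top\in\GL(K,q^2)$ — needed to pass transitivity from $V^\perp$ back to $V$; it is routine, but worth a sentence. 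A small terminological slip: when $V$ is $\F_{q^2}$-stable it is not a $q$-system, so strictly speaking no linear set is associated to it under the paper's Definition; your conclusion (this case cannot occur under (b)) is nonetheless right.
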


\begin{proof}
The equivalence between (a) and (b) follows from Theorem \ref{thm:characterization_outer_strong}, observing that, for any linear set $(L_V,\wt_V)$, $\wt_V(P_i)\geq 1$ if and only if $P_i\in L_V$. Furthermore, 
using a dimension argument, every $[2K-2,K]_{q^2/q}$ system $V'$ is $\GL(K,q^2)$-equivalent to a space of the form
$$\{(\alpha_1,\ldots,\alpha_K)\St \alpha_i \in \F_{q^2} \mbox{ for each } i \in [K-2], (\alpha_{K-1},\alpha_K)\in T\}, $$
for some $[2,2]_{q^2/q}$ system $T$. However, any $[2,2]_{q^2/q}$ system is $\GL(2,q^2)$-equivalent to $\Fq^2$ and thus the linear set associated to $V'$ is equivalent to a rank $2$ degenerate Hermitian variety.
\end{proof}

Note that, for $K=3$, the above result  was indeed proved in \cite{bartoli2022cutting}.

As a byproduct, we derive a lower bound on the minimum number of points not contained in a degenerate rank $r$ variety with $1\leq r \leq 2$. 

\begin{corollary}\,
\begin{enumerate}
    \item The smallest size of an outer strong blocking set in $\PG(K-1,q^2)$ is at most the minimum number of points in $\PG(K-1,q^2)$ which are not contained in a hyperplane nor in a rank $2$ degenerate Hermitian variety. 
    \item Every set of $3K-3$ points in $\PG(K-1,q^2)$ is contained in a degenerate rank $r$ Hermitian variety with $1\leq r\leq 2$.
\end{enumerate}
\end{corollary}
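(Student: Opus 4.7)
Part~(1) follows immediately from the equivalence (a)$\Leftrightarrow$(c) of Theorem~\ref{thm:TFAE}: a finite set $\mP\subseteq \PG(K-1,q^2)$ contained neither in a hyperplane nor in a rank-$2$ degenerate Hermitian variety yields a collection $\{\mF_q(P):P\in\mP\}$ with the avoidance property, so $\mF_q(\mP)$ is a strong blocking set in $\PG(2K-1,q)$ by the earlier theorem on the avoidance property, and $\mP$ is itself an outer strong blocking set.

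For part~(2), I would work inside the $\F_q$-vector space $\mathcal H$ of Hermitian $K\times K$ matrices over $\F_{q^2}$, which has $\F_q$-dimension $K^2$. For each point $P_i$ the constraint $P_i\in\mV_H$ is a single $\F_q$-linear equation on $H$ (since $P_iH\sigma(P_i)^\top\in\F_q$), so the subspace $\mathcal W\subseteq \mathcal H$ of Hermitian forms vanishing on all $3K-3$ given points has
\[
\dim_{\F_q}\mathcal W \;\geq\; K^2-(3K-3) \;=\; (K-1)(K-2)+1.
\]
The variety $\mathcal R$ of rank-$\leq 2$ Hermitian matrices has $\F_q$-codimension $(K-2)^2$ in $\mathcal H$ (parametrized by its $(K-2)$-dimensional kernel together with a $2\times 2$ Hermitian form on the $2$-dimensional quotient), so the expected-intersection count yields $\dim(\mathcal W\cap \mathcal R)\geq K-1$ over $\overline{\F_q}$; the proof reduces to producing a nonzero $\F_q$-rational element of this intersection.

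For $K=3$ the locus $\mathcal R$ is cut out by the single cubic equation $\det H=0$, and the intersection with $\mathcal W$ is a plane cubic in $\PG(\mathcal W)\cong \PG(2,q)$, which always has an $\F_q$-rational point; this recovers \cite[Theorem~3.15]{bartoli2022cutting}. For $K\geq 4$ and $q\geq 3K-3$, Corollary~\ref{cor:bound_length} forces every outer strong blocking set to have at least $3K-2$ points, so by the equivalence of Theorem~\ref{thm:TFAE} every $3K-3$ points must lie on a rank-$\leq 2$ Hermitian variety. For the remaining small-$q$ regime with $K\geq 4$, I would fix a $(K-3)$-dimensional projective subspace $U$ as the prospective vertex of the cone and use the $4(K-2)$-dimensional Grassmannian of choices of $U$ to force a rank drop in the induced linear system on the $4$-dimensional $\F_q$-space of $2\times 2$ Hermitian forms on the quotient.

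The main obstacle is precisely this rational-point issue: the naive intersection count is purely geometric, and because $\mathcal R$ is non-linear one cannot automatically upgrade it to a statement over $\F_q$. The combination of the plane-cubic argument for $K=3$, Corollary~\ref{cor:bound_length} for $q\geq 3K-3$, and the explicit cone construction for the remaining regime is what is needed to make the argument go through uniformly.
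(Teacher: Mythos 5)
Part~(1) is fine: the chain (not on a hyperplane, not on a rank-$2$ Hermitian variety) $\Rightarrow$ (avoidance property, by Theorem~\ref{thm:TFAE}) $\Rightarrow$ (field reduction is a strong blocking set) $\Rightarrow$ (outer strong blocking set) is exactly the intended deduction.

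Part~(2), however, is on the wrong track, and the obstacle you flag is not actually there once you choose the right ingredient. You set up an intersection problem inside the $\F_q$-space of $K\times K$ Hermitian matrices and then struggle to certify an $\F_q$-rational point of the rank-$\leq 2$ locus meeting your $(K^2-3K+3)$-dimensional linear system; for this you fall back on Corollary~\ref{cor:bound_length}, which only forces an outer strong blocking set to have size $\geq 3K-2$ when $q+1\geq 3K-2$, so your argument genuinely breaks down for small $q$, and the ``explicit cone construction'' you invoke for the remaining regime is not carried out. But the statement is unconditional in $q$, and the right tool is not the lower bound on outer strong blocking sets but the combinatorial step inside the proof of Theorem~\ref{thm:number_hspaces}: there it is shown that \emph{any} $\lfloor \frac{k-1}{h}\rfloor+\lfloor \frac{k-2}{h-1}\rfloor$ many $(h-1)$-subspaces of $\PG(k-1,q)$ fail the avoidance property, with no hypothesis on $q$ at all (one first places $\lfloor\frac{k-1}{h}\rfloor$ of them inside a hyperplane $\Pi$, then packs an $(h-2)$-subspace of each of the remaining ones into a hyperplane of $\Pi$). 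Specializing to $h=2$, $k=2K$ gives exactly $3K-3$ lines. So given $3K-3$ points of $\PG(K-1,q^2)$: if they lie on a hyperplane you are done with $r=1$; otherwise their field-reduced lines fail the avoidance property, and Theorem~\ref{thm:TFAE}(a)$\Leftrightarrow$(c) says precisely that the points lie on a rank-$2$ degenerate Hermitian variety. This is purely a pigeonhole/dimension count on subspaces, so there is no rational-point issue to circumvent and no case split on $q$. In short: replace your use of Corollary~\ref{cor:bound_length} (and the Hermitian-matrix intersection heuristic) with the avoidance-property failure established in the proof of Theorem~\ref{thm:number_hspaces}, and both parts become one-line consequences of Theorem~\ref{thm:TFAE}.
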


\begin{remark}
    It was shown in \cite{randrianarisoa2020geometric} that $q$-systems correspond to nondegenerate rank-metric codes. An $\Fhk$ \textbf{rank-metric} \textbf{code} is an $\F_{q^h}$-linear subspace $\mC \subseteq \F_{q^h}^n$ of dimension~$k$, endowed with the rank distance, defined as $\mathrm{d}_{\mathrm{rk}}(u,v):=\rk(u-v)$ for every $u,v\in\F_{q^h}^n$, where $\rk(v):=\dim_{\F_q}\langle v_1, \ldots, v_n\rangle _{\F_q}$ for $v = (v_1,\ldots,v_n)\in\F_{q^h}^n$.
 A \textbf{generator matrix} of an $\Fhk$ rank-metric code is a matrix $\smash{G \in \F_{q^h}^{k \times n}}$ whose rows generate $\mC$ as an $\F_{q^h}$-linear space. $\mC$ is said to be \textbf{nondegenerate} if the columns of one (and hence all) generator matrix are linearly independent over $\F_q$. For a vector 
$v \in \F_{q^h}^n$ and an ordered basis $\Gamma=\{\gamma_1,\ldots,\gamma_h\}$ of the field extension $\F_{q^h}/\F_q$, let $\Gamma(v) \in \F_q^{n \times h}$ be the matrix defined by
$$v_i= \sum_{j=1}^h \Gamma(v)_{ij} \gamma_j.$$ The \textbf{support} of $v$ is the column space of $\Gamma(v)$ for any basis $\Gamma$ and we denote it by $\sigma_\rk(v)$. The support of $\mC$, denoted by $\sigma_\rk(C)$ is the subspace sum of the supports of its codewords. The correspondence between rank-metric codes and $q$-systems works as follows. Let $\mC$ be an $\Fhk$ nondegenerate rank-metric code with generator matrix $G$. Then, the $\F_q$-span~$V$ of the columns of $G$ is an $\Fhk$ system. Conversely, if $V\subseteq \F_{q^h}^k$ is an $\Fhk$ system and $G\in\F_{q^h}^{k\times n}$ is a matrix whose columns form a basis of $V$, then, clearly, the rows of $G$ generate an $\Fhk$ rank-metric code. Considering this relation, we can re-interpret Theorems \ref{thm:characterization_outer_strong} and \ref{thm:TFAE} in coding theoretical terms. In particular, items (a) and (b) of Theorem \ref{thm:characterization_outer_strong} are equivalent to 
    \begin{enumerate}
        \item[(c)] There is no generator matrix $G\in \F_{q^h}^{K\times (Kh-2)}$ of a nondegnerate $[Kh-2,K]_{q^h/q}$ rank-metric code such that $\dim_{\Fq}(\sigma_\rk(\mathrm{rowsp}(A_iG)))\leq (K-1)h-1$ for each $i \in [N]$, where $\mathrm{rowsp}(A_i)= v_i^\perp$ and $P_i=\langle v_i\rangle_{\F_{q^h}}$.
    \end{enumerate}
    Items (a), (b) and (c) from Theorem \ref{thm:TFAE} are equivalent to 
    \begin{enumerate}
        \item[(d)]  There is no generator matrix $G\in \F_{q^2}^{K\times (2K-2)}$ of a nondegnerate $[2K-2,K]_{q^2/q}$ rank-metric code such that $\dim_{\Fq}(\sigma_\rk(\mathrm{rowsp}(A_iG)))\leq 2K-3$ for each $i \in [N]$, where $\mathrm{rowsp}(A_i)= v_i ^\perp$ and $P_i=\langle v_i\rangle_{\F_{q^2}}$. 
    \end{enumerate}
\end{remark}

\subsection{Iterative Super Construction} In this section we illustrate an explicit inductive construction of strong blocking sets in $\PG(k-1,q)$ -- and hence $k$-dimensional minimal codes over $\Fq$ -- which relies on the following three ingredients: the construction of a rational normal tangent set, Theorem~\ref{thm:4points} and Lemma \ref{lemma_prop4.5}. The size of the obtained strong blocking sets is  \emph{almost linear} in $k$, in a sense that will be clarified with Theorem \ref{thm:super_construction}. However, compared to the known explicit constructions of families of strong blocking sets whose size is linear in $k$ \cite{bartoli2021small,alon2023strong}, there is a significant gain in the computational cost needed for their construction.

\medskip 

We start as a base step with the only strong blocking set of the projective line $\mB_0=\PG(1,q)$.
The inductive step of this construction is based on the following argument. Suppose that we want to construct a strong blocking set in $\PG(k-1,q)$. Let 
$$j:=\min\Big\{ s \in \mathbb N \St s \mid k, \, \frac{2k}{s}-3 \leq q^s \Big\}.$$
Then, we can construct a strong blocking set $\mathcal B\subseteq \PG(\frac{k}{j}-1,q^j)$ as a rational normal tangent set, that is the union of  $\left(2\frac{k}{j}-3\right)$ lines.  Thus, by Theorem \ref{thm:4points}, we can select four suitable points from each line (that is, not belonging to any proper subline), obtaining an outer strong blocking set in $\PG(\frac{k}{j}-1,q^j)$. Then, using field reduction, we get a strong blocking set in $\PG(k-1,q)$ formed by the union of $(j-1)$-dimensional subspaces, and taking in each of them an isomorphic copy of a strong blocking set in $\PG(j-1,q)$, we obtain a smaller a strong blocking set in $\PG(k-1,q)$, by Lemma \ref{lemma_prop4.5}.
Observe that this last step is equivalent to concatenate the corresponding outer minimal $[4(2\frac{k}{j}-3),\frac{k}{j}]_{q^j}$ code with a minimal $[n,j]_q$ code.

\medskip 

In the following, we explain a concrete construction of strong blocking sets for a special sequence of dimensions.

\begin{construction}\label{constr:super} For odd $q$, we  construct a sequence of strong blocking sets $\mathcal B_i\subseteq \PG(k_i-1,q)$ using the  arguments above, where $\{k_i\}_{i \in \N}$ is defined as
$$\begin{cases}k_0=2, \\ 
k_{i+1}=\dfrac{k_i(q^{k_i}+3)}{2}.\end{cases}$$
We start with $\mB_0=\PG(1,q)$. By induction, from the strong blocking set $\mB_i\subseteq \PG(k_i-1,q)$, we construct $\mB_{i+1} \subseteq \PG(k_{i+1}-1,q)$ as follows. 
\begin{enumerate}
    \item We work in $\PG(\frac{k_{i+1}}{k_i}-1,q^{k_i})=\PG(\frac{q^{k_i}+3}{2}-1,q^{k_i})$. In this space, we take a rational normal tangent set 
    $$\mathcal D_i=\bigcup_{j=1}^{q^{k_i}}\ell_j.$$
    \item For each line $\ell_j$, we select $4$ points $\{\alpha_1^{(j)},\alpha_1^{(j)},\alpha_1^{(j)},\alpha_1^{(j)}\}$ not belonging to the same proper subline. 
    \item The field reduction  $\mF_q(\alpha_t^{(j)})$ of each of the $4q^{k_i}$ points selected before is a $(k_i-1)$-dimensional subspace. In each of these $(k_i-1)$-dimensional space we select an isomorphic copy of $\mB_i$. We define $\mB_{i+1}$ to be their union.
\end{enumerate}
\end{construction}

The following result shows that we are indeed producing a sequence of strong blocking sets, whose size is linear in $q$ and almost linear in $k$. In particular, it depends on the \textbf{iterated logarithm} in base $q$, which is defined over the reals as

$$\log_q^{*}n:=\begin{cases}0&{\mbox{if }}n\leq 1;\\1+\log_q ^{*}(\log_q n)&{\mbox{if }}n>1.
\end{cases}$$

\begin{theorem}\label{thm:super_construction}
 For every $i \in \N$, the set $\mB_i$ given in Construction \ref{constr:super} is a strong blocking set in $\PG(k_i-1,q)$, whose size is upper bounded by
 $$ |\mB_i|\leq \frac{8^{\log_q^*(k_i)}}{2}k_i(q+1).$$
\end{theorem}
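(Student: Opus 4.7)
The plan is a nested induction on $i$: first, to show each $\mB_i$ is a strong blocking set in $\PG(k_i-1,q)$; second, to bound the ratio $|\mB_i|/k_i$ by a clean recursion; and third, to relate $i$ to $\log_q^*(k_i)$.

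For the structural part, the base case is $\mB_0=\PG(1,q)$ (Example \ref{ex1}). For the inductive step, assume $\mB_i$ is a strong blocking set in $\PG(k_i-1,q)$. In the ambient space $\PG(\frac{q^{k_i}+3}{2}-1,q^{k_i})$ a rational normal tangent set $\mathcal D_i$ exists: indeed $2\cdot\frac{q^{k_i}+3}{2}-3=q^{k_i}$, so the field-size condition is met with equality, and the characteristic constraint is bypassed by the diverted tangents construction of \cite{fancsali2014lines}. Thus $\mathcal D_i$ is a strong blocking set formed by $q^{k_i}$ lines, and Theorem \ref{thm:4points} guarantees that selecting four points on each line, no three on a proper subline, produces an outer strong blocking set. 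By the very definition of outer strong blocking set, its field reduction is a strong blocking set in $\PG(k_{i+1}-1,q)$ realised as the union of $4q^{k_i}$ subspaces of dimension $k_i-1$. Replacing each such subspace by an isomorphic copy of $\mB_i$ preserves the strong blocking property by Lemma \ref{lemma_prop4.5}, so $\mB_{i+1}$ is a strong blocking set.

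The construction immediately yields $|\mB_{i+1}|\leq 4q^{k_i}|\mB_i|$. Dividing by $k_{i+1}=k_i(q^{k_i}+3)/2$,
\[
\frac{|\mB_{i+1}|}{k_{i+1}}\leq \frac{8\,q^{k_i}}{q^{k_i}+3}\cdot\frac{|\mB_i|}{k_i}\leq 8\cdot\frac{|\mB_i|}{k_i},
\]
so iterating from $|\mB_0|/k_0=(q+1)/2$ gives $|\mB_i|\leq \frac{8^i}{2}k_i(q+1)$.

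To upgrade $8^i$ to $8^{\log_q^*(k_i)}$ it suffices to prove $i\leq \log_q^*(k_i)$ for every $i\geq 0$. The base case $0\leq \log_q^*(2)=1$ is immediate. For the step, the key estimate is
\[
\log_q(k_{i+1})=\log_q\!\left(\frac{k_i}{2}\right)+\log_q(q^{k_i}+3)\geq 0+k_i,
\]
using $k_i\geq 2$. By monotonicity of $\log_q^*$ and the inductive hypothesis, $\log_q^*(k_{i+1})=1+\log_q^*(\log_q k_{i+1})\geq 1+\log_q^*(k_i)\geq i+1$, closing the induction.

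The main obstacle is not a single step but the orchestration of these three nested inductions; conceptually, the delicate point is that the rational normal tangent set condition $q^{k_i}\geq 2K-3$ is tight at every level, which is precisely why the sequence $k_{i+1}=k_i(q^{k_i}+3)/2$ has to be chosen in this specific form.
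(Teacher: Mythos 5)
Your proposal is correct and follows the same three-part strategy as the paper's proof: induction for the strong blocking set property, the recursive bound $|\mB_{i+1}|\leq 4q^{k_i}|\mB_i|$ giving $|\mB_i|\leq\tfrac{8^i}{2}k_i(q+1)$, and then converting $8^i$ into $8^{\log_q^*(k_i)}$. Where you differ is in the last step: the paper argues informally that $k_{i+1}\geq q^{k_i}$ forces the map $i\mapsto k_i$ to grow at least as fast as the tetration in base $q$, and then invokes the fact that the inverse of tetration behaves like the iterated logarithm, without spelling out the inequality $i\leq\log_q^*(k_i)$. You instead prove $i\leq\log_q^*(k_i)$ by a clean nested induction, observing that $\log_q(k_{i+1})\geq k_i$ (from $k_i/2\geq 1$ and $q^{k_i}+3>q^{k_i}$) and then appealing to the monotonicity of $\log_q^*$ and its recursive definition. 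This makes the final estimate fully rigorous, whereas the paper's version is a sketch. Both approaches buy the same bound; yours is the one you would want if the theorem were to be cited with the constant $\tfrac12$ in place (which, incidentally, the paper's closing display inside the proof silently drops, stating $|\mB|\leq 8^{\log_q^*(k)}k(q+1)$ instead of the claimed $\tfrac{1}{2}\,8^{\log_q^*(k)}k(q+1)$).
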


\begin{proof}
 We show that $\mB_i$ is a strong blocking set in $\PG(k_i-1,q)$ by induction on $i$. For $i=0$ it is clear, since $\mB_0=\PG(k_0-1,q)=\PG(1,q)$.
 
 Now, assume that $\mB_i$ is a strong blocking set in $\PG(k_i-1,q)$. First of all, the rational normal tangent set $\mD_i$ constructed at step (1) is a strong blocking set in $\PG(\frac{q^{k_i}+3}{2}-1,q^{k_i})$, and it is taken as union of lines. Furthermore, by Theorem \ref{thm:4points}, the set 
 $$\bigcup_{j=1}^{q^{k_i}}\bigcup_{t=1}^4\mF_q(\alpha_t^{(j)})$$
 is a strong blocking set, and it is obtained as union of $(k_i-1)$-dimensional subspaces. Finally, $\mB_{i+1}$ is obtained as the union of an isomorphic copy of a strong blocking set $\mB_i$ in each of the $(k_i-1)$-dimensional subspaces. By Lemma \ref{lemma_prop4.5}, this is a strong blocking set. 
 
 Let $n_i=|\mB_i|$. It is immediate to see that the sequence $\{n_i\}_{i \in \N}$ satisfies the following recurrence relation:
 $$\begin{cases}n_0=q+1, \\
n_{i+1}=4\Big(2\dfrac{k_{i+1}}{k_i}-3\Big)n_i=4{q^{k_i}}n_i.
\end{cases}$$
The inverse of the rate $R_i$ of the corresponding minimal $[n_i,k_i]_q$ code is therefore given by
$$\begin{cases}R_0^{-1}=\dfrac{n_0}{k_0}=\dfrac{q+1}{2}, \\
R_{i+1}^{-1}=\dfrac{n_{i+1}}{k_{i+1}}=\dfrac{8{q^{k_i}}n_i}{k_i(q^{k_i}+3)}.
\end{cases}$$
We can immediately see that $R_{i+1}^{-1}\leq 8R_i^{-1}$. Thus, the sequence of inverses of rates is upper bounded by
$$R_{i}^{-1}\leq \dfrac{q+1}{2}8^i.$$
and the size of $\mathcal B_i$ is upper bounded by
$$|\mathcal B_i|=n_i={R_i^{-1}}k_i\leq \frac{q+1}{2}8^ik_i.$$ 
In the above formula, $i=\Upsilon^{-1}(k_i)$, where we simply put $\Upsilon(i)=k_i$. Note that, since 
$k_{i+1}\geq q^{k_i}$, we have that the map $\Upsilon$ grows at least as fast as the \textbf{tetration} in base $q$, that is defined as
$$^iq:=\underbrace{q^{q^{\cdot ^{\cdot ^{q}}}}}_{i \mbox{\footnotesize{ times}}}.$$
Thus, the inverse of $\Upsilon$ grows at most as fast as the inverse of the tetration, which has the same behavior as the {iterated logarithm}. 
Therefore, we can conclude that our construction of strong blocking sets $\mathcal B\subseteq \PG(k-1,q)$ has size
$$ |\mB|\leq 8^{\log_q^*(k)}k(q+1).$$ 
\end{proof}

\begin{remark}
We can easily adapt Constraction \ref{constr:super} to even $q$ case. We consider the sequence of dimensions $\{k_i\}_{i \in \N}$ as
$$\begin{cases}k_0=2, \\ 
k_{i+1}=\dfrac{k_i(q^{k_i}+2)}{2}.\end{cases}$$
All the other arguments are similar, and we obtain strong blocking sets in $\PG(k-1,q)$ whose size is again at most $8^{\log_q^*(k)}k(q+1)$.
\end{remark}

\begin{remark}
We want to highlight the fact that we can generalize Construction \ref{constr:super} to many more sequences of dimensions $\{k_i\}_{i \in N}$. Let $q$ be odd and let $s>1$ be a positive integer coprime with $q$. Moreover, let $t=\mathrm{ord}_{(\mathbb Z/s\mathbb Z)^*}(q)$ be the multiplicative order of $q$ in $(\mathbb Z/s\mathbb Z)^*$. Let us assume that we can construct a strong blocking set $\mB_0\subseteq \PG(t-1,q)$. Then, using the exact same argument as in Construction \ref{constr:super}, we can iteratively construct a strong blocking set $\mB_i\subseteq \PG(k_i-1,q)$ for the sequence of dimensions $\{k_i\}_{i \in N}$, defined via $$\begin{cases}k_0=t, \\ 
k_{i+1}=\dfrac{k_i(q^{k_i}-1)}{s}.\end{cases}$$
This ensures that for each $i\in \mathbb N$ we have $k_i\equiv 0 \pmod{t}$ and hence $k_{i+1}$ is divisible by $k_i$. The size of the strong blocking set $\mB_i\subseteq \PG(k_i-1,q)$ follows the recursive law
$$ 
|\mB_{i+1}|=4\left(\dfrac{2(q^{k_i}-1)}{s}-3\right)|\mB_{i}|, $$
yielding  
$$  |\mB_i|\leq \frac{8^{\log_q^*(k_i)}}{t}k_i|\mB_0|. $$
\end{remark}

\begin{remark}
Using the method of \emph{diverted tangents} described in \cite[Section 3.2]{fancsali2014lines}, here we concretely illustrate how Construction \ref{constr:super} works. Suppose that we have constructed the $i$-th code $\mathcal{C}_i$ associated to the strong blocking set $\mB_i\subseteq \PG(k_i-1,q)$. Define 
$$ a_i(\lambda)=\Big(1,\lambda,\lambda^2,\ldots,\lambda^{\frac{k_{i+1}}{k_i}-1}\Big), \quad b_i(\lambda)=\Big(0,1,\varphi_i(2)\lambda,\varphi_i(3)\lambda^2,\ldots,\varphi_i\big(\frac{k_{i+1}}{k_i}-1\big)\lambda^{\frac{k_{i+1}}{k_i}-2}\Big),$$
where $\varphi_i:\{0,\ldots,q^{k_i}-1\}\longrightarrow \F_{q^{k_i}}$ is any bijection such that $\varphi_i(0)=0$ and $\varphi_i(1)=1$.
Take now the  projective $[4q^{k_i},\frac{k_{i+1}}{k_i}]_{q^{k_i}}$ system 
$$\mathcal X_i:=\bigcup_{\lambda \in \F_{q^{k_i}}}\{[a_i(\lambda)],[b_i(\lambda)],[a_i(\lambda)+b_i(\lambda)],[a_i(\lambda)+\omega_i b_i(\lambda)] \}, $$
where $\omega_i \in\F_{q^{k_i}}$ is not contained in any proper subfield (e.g. we can take a primitive element of $\F_{q^{k_i}}$), and let $\mR_i$ be the code associated to $\mathcal X_i$. Then,
$\mC_{i+1}=\mC_i \square \mR_i$
is the minimal $[n_{i+1},k_{i+1}]_q$ code associated to the strong blocking set given in Construction \ref{constr:super}. 
\end{remark}

We conclude this section by analyzing the computational cost of Construction \ref{constr:super}.

\begin{proposition}[Computational cost]
 The strong blocking sets in $\PG(k-1,q)$ given in Construction \ref{constr:super} can be constructed with 
 $$ \mathcal O(k\log^\omega(k)8^{\log_q^*(k)})$$
 operations over $\Fq$, where $\omega\leq 2.371552$ is the matrix multiplication exponent.
\end{proposition}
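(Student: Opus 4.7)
The plan is to analyze Construction~A iteration by iteration, exploiting the tetrational growth of $\{k_i\}$ so that only $I=\mathcal{O}(\log_q^{*}(k))$ iterations are needed to reach dimension~$k$. At each iteration we produce the structured data from which $\mB_{i+1}$ is assembled: the field $\F_{q^{k_i}}$, a primitive element $\omega_i$, the bijection $\varphi_i$, the Vandermonde description of $\mR_i$, and the block form \eqref{eq:gen_matrix_conc} of the concatenation $\mC_{i+1}=\mC_i\square\mR_i$.

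At iteration~$i$, three cost contributions appear. (a) Setting up $\F_{q^{k_i}}$ and finding $\omega_i$ takes $\mathrm{poly}(k_i)$ operations over $\F_q$ by classical algorithms. (b) Listing the outer data for $\mR_i$ amounts to evaluating the rational normal curve and its tangent lines at $q^{k_i}$ parameters, which by the Vandermonde structure costs $\mathcal{O}(q^{k_i})$ operations in $\F_{q^{k_i}}$. (c) The concatenation requires, for each $\alpha\in\F_{q^{k_i}}$ that appears in $G_{\mR_i}$, a product $A_i(\alpha)\cdot G_{\mC_i}$ of a $k_i\times k_i$ companion matrix with the $k_i\times n_i$ inner generator; precomputing the $q^{k_i}-1$ powers $A_i^r$ once and tabulating the corresponding products $A_i^r G_{\mC_i}$ reduces this to $\mathcal{O}(q^{k_i})$ matrix multiplications, each of cost $\mathcal{O}(k_i^{\omega})$ operations over $\F_q$ via fast matrix multiplication.

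Plugging in the inverse-rate bound $n_i\leq\frac{q+1}{2}8^ik_i$ from Theorem~\ref{thm:super_construction} and the estimate $q^{k_i}\approx 2k_{i+1}/k_i$, the concatenation cost at step~$i$ works out to $\mathcal{O}(k_{i+1}k_i^{\omega-1}8^i)$. Thanks to the tetrational growth of $k_i$, the sum $\sum_{i=0}^{I-1}k_{i+1}k_i^{\omega-1}8^i$ is dominated by its last term, and substituting $k_I=k$, $k_{I-1}=\Theta(\log_q k)$ and $I=\mathcal{O}(\log_q^{*}(k))$ produces the announced bound $\mathcal{O}(k\log^{\omega}(k)8^{\log_q^{*}(k)})$.

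The main obstacle will be the careful bookkeeping of these costs. One must verify that the field-setup and tangent-evaluation contributions (a) and (b) are dwarfed by (c) at every step — which they are, since both are at most $\mathrm{poly}(k_i)\cdot\mathrm{polylog}(k_i)$, much smaller than $k_{i+1}k_i^{\omega-1}8^i$ already for $i\geq 1$ — and that fast $k_i\times k_i$ matrix multiplication is applied at the correct scale to produce the exponent $\omega$ of $\log(k)$ in the final bound. Beyond that, the argument is essentially a geometric series collapse driven by the extreme ratios $k_{i+1}/k_i$ inherent in tetrational growth.
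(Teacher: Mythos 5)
Your proposal follows the same route as the paper: at each iteration the dominant cost is the concatenation step, which requires only $q^{k_i}$ distinct products $A(\alpha)G_{\mC_i}$ of a $k_i\times k_i$ matrix with a $k_i\times n_i$ matrix, each handled by fast matrix multiplication; combining this with the inverse-rate bound $n_i\leq\frac{q+1}{2}8^ik_i$ and the tetrational growth of the $k_i$'s collapses the sum to its last term. The only difference is a slightly tighter piece of bookkeeping: you use $q^{k_i}\approx 2k_{i+1}/k_i$ where the paper uses the looser $q^{k_i}\leq k_{i+1}$, so you arrive at $\mathcal O(k_{i+1}k_i^{\omega-1}8^i)$ per step rather than the paper's $\mathcal O(k_{i+1}k_i^{\omega}8^{\log_q^*(k_i)})$, which still implies the stated bound.
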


\begin{proof}
Constructing the strong blocking set $\mB_{i+1}$ described in Construction \ref{constr:super} is equivalent to constructing a generator matrix for the associated code $\mC_{i+1}$. In order to do that, we need to compute the matrix in \eqref{eq:gen_matrix_conc}, where the generator matrix  $G_{\mC}$ of the outer code is obtained by putting the coordinates of the points in $\mathcal X_i$ as columns, and the generator matrix of the inner code is already obtained at the previous step. This is done by first computing the powers of every element $\lambda \in\F_{q^{k_i}}$ up to the $\frac{k_{i+1}}{k_i}$-th power and any bijection $\varphi_i:\{0,1,\ldots,q^{k_i}-1\}\rightarrow \F_{q^{k_i}}$, so that we can compute $a_i(\lambda)$ and $b_i(\lambda)$  for every $\lambda \in \F_{q^{k_i}}$. The cost of doing so is  $\mathcal O(k_{i+1})$ multiplications over $\F_{q^{k_i}}$, that is $\mathcal O(k_{i+1}\log(k_{i+1}))$ operations over $\Fq$. Since the matrix we need to compute is given by \eqref{eq:gen_matrix_conc} and the matrices of the form $A(\alpha)$ are at most $q^{k_i}$, that is the  cardinality of $\F_{q^k_i}$, in order to compute the generator matrix of $\mC_{i+1}$ we only need to perform $q^{k_i}$ matrix multiplications  between a ${k_i}\times {k_i}$ matrix and a ${k_i}\times n_i$ matrix. Since $n_i \in \mathcal O(k_i 8^{\log_q^*{k_i}})$ and $\log^*_q(k_i)\leq
\log^*_q(k_{i+1})-1$, this leads to a  computational cost of 
$$
\mathcal O(k_{i+1}k_i^{\omega}8^{\log^*_q(k_i)})=\mathcal O(k_{i+1}\log^{\omega}(k_{i+1})8^{\log^*_q(k_{i+1})})$$
operations over $\Fq$.
Hence, this is the dominant part of the total cost.
\end{proof}

\begin{remark}
Let us compare Construction \ref{constr:super} with two of the most relevant constructions of small strong blocking sets whose size is linear in $k$.
In \cite{bartoli2021small} a construction of small strong blocking set has been illustrated, whose linear in $k$ and quartic in $q$. Another very recent explicit construction is  based on expander graphs, and its size is linear in both $k$ and $q$; see~\cite{alon2023strong}.

Even though Construction \ref{constr:super} is not linear in $k$ -- and hence does not define a family of asymptotically good minimal codes -- it has a clear advantage in terms of computational cost. Indeed, both the constructions in \cite{bartoli2021small} and in \cite{alon2023strong} rely on the celebrated constructions of asymptotically good algebraic geometry codes beating the Gilbert-Varshamov bound \cite{garcia1996asymptotic,garcia1995tower,MR705893}. The fastest algorithm for constructing such codes can be found in \cite{shum2001low} where it is shown that its cost is  of the order of
$$\mathcal O(k^3\log^3(k)),$$
yielding  a much higher complexity. The price that we have to pay with our construction is in terms of the size, but it is very little, since the iterative logarithm is almost negligible. Indeed, for any practical purpose, $\log^*_q(a)$ can be considered as a constant.
\end{remark}

\medskip

\subsection*{Acknowledgements}

G.~N.~A.\
is supported by the Swiss National Foundation through grant no. 210966. M.~B. is partially supported by the ANR-21-CE39-0009 - BARRACUDA (French \emph{Agence Nationale de la Recherche}). A.~N. is supported by the Research Foundation -- Flanders (FWO) grant number 12ZZB23N.

\bigskip

\bibliographystyle{abbrv}
\bibliography{references.bib}

\begin{thebibliography}{10}

\bibitem{alfarano2019geometric}
G.~N. Alfarano, M.~Borello, and A.~Neri.
\newblock A geometric characterization of minimal codes and their asymptotic
  performance.
\newblock {\em Adv. in Math. Commun.}, 16(1):115--133, 2022.

\bibitem{alfarano2022three}
G.~N. Alfarano, M.~Borello, A.~Neri, and A.~Ravagnani.
\newblock Three combinatorial perspectives on minimal codes.
\newblock {\em SIAM J. Discrete Math.}, 36(1):461--489, 2022.

\bibitem{alon2023strong}
N.~Alon, A.~Bishnoi, S.~Das, and A.~Neri.
\newblock Strong blocking sets and minimal codes from expander graphs.
\newblock {\em arXiv preprint arXiv:2305.15297}, 2023.

\bibitem{ashikhmin1998minimal}
A.~Ashikhmin and A.~Barg.
\newblock Minimal vectors in linear codes.
\newblock {\em IEEE Trans. Inf. Theory}, 44(5):2010--2017, 1998.

\bibitem{bartoli2021inductive}
D.~Bartoli, M.~Bonini, and B.~G{\"u}ne{\c{s}}.
\newblock An inductive construction of minimal codes.
\newblock {\em Cryptogr. Commun.}, 13(3):439--449, 2021.

\bibitem{bartoli2021small}
D.~Bartoli and M.~Borello.
\newblock Small strong blocking sets by concatenation.
\newblock {\em SIAM J. Discrete Math.}, 37(1):65--82, 2023.

\bibitem{bartoli2022cutting}
D.~Bartoli, A.~Cossidente, G.~Marino, and F.~Pavese.
\newblock On cutting blocking sets and their codes.
\newblock {\em Forum Math.}, 34(2):347--368, 2022.

\bibitem{bartoli2020resolving}
D.~Bartoli, G.~Kiss, S.~Marcugini, and F.~Pambianco.
\newblock Resolving sets for higher dimensional projective spaces.
\newblock {\em Finite Fields their Appl.}, 67:101723, 2020.

\bibitem{bishnoi2023blocking}
A.~Bishnoi, J.~D'haeseleer, D.~Gijswijt, and A.~Potukuchi.
\newblock Blocking sets, minimal codes and trifferent codes.
\newblock {\em arXiv preprint arXiv:2301.09457}, 2023.

\bibitem{bonini2021minimal}
M.~Bonini and M.~Borello.
\newblock Minimal linear codes arising from blocking sets.
\newblock {\em J. Algebr. Comb.}, 53(2):327--341, 2021.

\bibitem{chabanne2013towards}
H.~Chabanne, G.~Cohen, and A.~Patey.
\newblock Towards secure two-party computation from the wire-tap channel.
\newblock In {\em International Conference on Information Security and
  Cryptology}, pages 34--46. Springer, 2013.

\bibitem{cohen2013minimal}
G.~D. Cohen, S.~Mesnager, and A.~Patey.
\newblock On minimal and quasi-minimal linear codes.
\newblock In {\em IMA International Conference on Cryptography and Coding},
  pages 85--98. Springer, 2013.

\bibitem{cohen1994intersecting}
G.~D. Cohen and G.~Z{\'e}mor.
\newblock Intersecting codes and independent families.
\newblock {\em IEEE Trans. Inf. Theory}, 40(6):1872--1881, 1994.

\bibitem{davydov2011linear}
A.~A. Davydov, M.~Giulietti, S.~Marcugini, and F.~Pambianco.
\newblock Linear nonbinary covering codes and saturating sets in projective
  spaces.
\newblock {\em Adv. Math. Commun.}, 5(1):119, 2011.

\bibitem{Davydov_2019}
A.~A. Davydov, S.~Marcugini, and F.~Pambianco.
\newblock New covering codes of radius {$R$}, codimension {$tR$} and
  {$tR+\frac{R}{2}$}, and saturating sets in projective spaces.
\newblock {\em Des. Codes Cryptogr.}, 87(12):2771--2792, 2019.

\bibitem{denaux2021constructing}
L.~Denaux.
\newblock Constructing saturating sets in projective spaces using
  subgeometries.
\newblock {\em Des. Codes, Cryptogr.}, 90(9):2113--2144, 2022.

\bibitem{denaux2022higgledy}
L.~Denaux.
\newblock Higgledy-piggledy sets in projective spaces of small dimension.
\newblock {\em Electron. J. Comb.}, 29(3), 2022.

\bibitem{fancsali2014lines}
S.~Fancsali and P.~Sziklai.
\newblock Lines in higgledy-piggledy arrangement.
\newblock {\em Electron. J. Comb.}, 21, 2014.

\bibitem{garcia1995tower}
A.~Garcia and H.~Stichtenoth.
\newblock A tower of {A}rtin-{S}chreier extensions of function fields attaining
  the {D}rinfeld-{V}ladut bound.
\newblock {\em Invent. Math.}, 121(1):211--222, 1995.

\bibitem{garcia1996asymptotic}
A.~Garcia and H.~Stichtenoth.
\newblock On the asymptotic behaviour of some towers of function fields over
  finite fields.
\newblock {\em J. Number Theory}, 61(2):248--273, 1996.

\bibitem{heger2021short}
T.~H{\'e}ger and Z.~L. Nagy.
\newblock Short minimal codes and covering codes via strong blocking sets in
  projective spaces.
\newblock {\em IEEE Trans. Inf. Theory}, 68(2):881--890, 2021.

\bibitem{jamison1977covering}
R.~E. Jamison.
\newblock Covering finite fields with cosets of subspaces.
\newblock {\em J. Comb. Theory Ser. A.}, 22(3):253--266, 1977.

\bibitem{lu2021parameters}
W.~Lu, X.~Wu, and X.~Cao.
\newblock The parameters of minimal linear codes.
\newblock {\em Finite Fields their Appl.}, 71:101799, 2021.

\bibitem{lunardon1999normal}
G.~Lunardon.
\newblock Normal spreads.
\newblock {\em Geom. Dedicata}, 75(3):245--261, 1999.

\bibitem{polverino2010linear}
O.~Polverino.
\newblock Linear sets in finite projective spaces.
\newblock {\em Discrete Math.}, 310(22):3096--3107, 2010.

\bibitem{randrianarisoa2020geometric}
T.~H. Randrianarisoa.
\newblock A geometric approach to rank metric codes and a classification of
  constant weight codes.
\newblock {\em Des. Codes, Cryptogr.}, 88(7):1331--1348, 2020.

\bibitem{shum2001low}
K.~W. Shum, I.~Aleshnikov, P.~V. Kumar, H.~Stichtenoth, and V.~Deolalikar.
\newblock A low-complexity algorithm for the construction of
  algebraic-geometric codes better than the {G}ilbert-{V}arshamov bound.
\newblock {\em IEEE Trans. Inform. Theory}, 47(6):2225--2241, 2001.

\bibitem{tang2019full}
C.~Tang, Y.~Qiu, Q.~Liao, and Z.~Zhou.
\newblock Full characterization of minimal linear codes as cutting blocking
  sets.
\newblock {\em IEEE Trans. Inform. Theory}, 67(6):3690--3700, 2021.

\bibitem{MR1186841}
M.~A. Tsfasman and S.~G. Vl{\u{a}}du{\c{t}}.
\newblock {\em Algebraic-geometric codes}, volume~58 of {\em Mathematics and
  its Applications (Soviet Series)}.
\newblock Kluwer Academic Publishers Group, Dordrecht, 1991.

\bibitem{MR705893}
M.~A. Tsfasman, S.~G. Vl\u{a}du\c{t}, and T.~Zink.
\newblock Modular curves, {S}himura curves, and {G}oppa codes, better than
  {V}arshamov-{G}ilbert bound.
\newblock {\em Math. Nachr.}, 109:21--28, 1982.

\end{thebibliography}

\end{document}